\theoremstyle{thmstyleone}%
\newtheorem{theorem}{Theorem}
\theoremstyle{thmstyletwo}%
\newtheorem{example}{Example}%
\newtheorem{remark}{Remark}%
\theoremstyle{thmstylethree}%
\newtheorem{definition}{Definition}%
\newtheorem{lemma}[theorem]{Lemma}
\begin{document}

\title[Split multivalued variational inequalities]{Levitin-Polyak well-posedness of split multivalued variational inequalities}

\author*[1,2]{\fnm{Soumitra} \sur{Dey}}\email{deysoumitra2012@gmail.com}

\author[1]{\fnm{Simeon} \sur{Reich}}\email{sreich@technion.ac.il}

\affil*[1]{\orgdiv{Department of Mathematics}, \orgname{The Technion -- Israel Institute of Technology}, \orgaddress{ \city{Haifa}, \postcode{3200003}, \country{Israel}}}

\affil[2]{\orgdiv{Department of Mathematics}, \orgname{University of Haifa}, \orgaddress{ \city{Haifa}, \postcode{3498838}, \country{Israel}}}


\abstract{We introduce and study the split multivalued variational inequality problem (SMVIP) and the parametric SMVIP. We examine, in particular, Levitin-Polyak well-posedness of SMVIPs and parametric SMVIPs in Hilbert spaces. We provide several examples to illustrate our theoretical results. We also discuss several important special cases.}

\keywords{Multivalued variational inequalities, Split inverse problems, Well-posedness}


\pacs[2020 MSC classification]{49K40, 49J40, 90C31, 47H10, 47J20}

\maketitle

\section{Introduction}
\label{Sec:1}
The concept of well-posedness is one of the most important and interesting topics in nonlinear analysis and optimization theory. It has been studied in connection with many problems, such as variational inequalities (VIs), equilibrium problems (EPs) and inverse problems (IPs). The notion of well-posedness for a minimization problem was first introduced and examined by Tykhonov \cite{ANTY1966} in metric spaces. Recall that a minimization problem is said to be well-posed (or correctly posed) if its solution set is a singleton and every minimizing sequence converges to this unique solution (see, for instance, \cite{MFUR1970}). In practice, the minimizing sequence generated by a numerical method may fail to lie inside the feasible set although the sequence of the distances of its elements to the feasible set does tend to zero. Such a sequence is called a generalized minimizing sequence for the minimization problem at hand. Motivated and inspired by this fact, Levitin and Polyak \cite{ESLE1966} modified the notion of Tykhonov well-posedness by replacing minimizing sequences by generalized minimizing sequence. On the other hand, since the requirement that the solution set be a singleton may be too restrictive, Furi and Vignoli \cite{MFUR1970} relaxed the above definition of well-posedness when applied to optimal control and mathematical programming problems. This encouraged researchers to study the notion of well-posedness of problems with multiple solutions. Motivated by this development, Zolezzi \cite{TZOL1995,TZOL1996} introduced the concept of parametric minimization problems and extended the concepts of well-posedness and well-posedness by perturbations to a more general setting. For further developments in the area of well-posedness and Levitin-Polyak (LP) well-posedness, see \cite{XXHU2006,ALDO1993} and references therein.

Let $\mathscr{H}$ be a real Hilbert space equipped with the inner product $\left\langle\cdot, \cdot\right\rangle$ and the norm $\|\cdot\|$, and let $C$ be a nonempty, closed and convex subset of $\mathscr{H}$. Let $f:\mathscr{H}\rightarrow\mathscr{H}$ be a mapping.
Then the classical {\em variational inequality} (VI) problem is defined as follows: find a point $x^*\in C$ such that
\begin{align}\label{VIP}
\left\langle f(x^*), x-x^*\right\rangle\geq 0 \; \; \forall  x\in C.\tag{VI}
\end{align}

Variational inequalities are a useful tool for solving many nonlinear analysis and optimization problems such as systems of linear and nonlinear equations and complementarity problem (see, for instance, \cite{DKIN1980}). The variational inequality problem \eqref{VIP} was first introduced and studied by Stampacchia \cite{GSTA1964} (in finite-dimensional Euclidean spaces). Thereafter, many researchers have paid attention to it and have established various analytical results (see, for example, \cite{PHAR1966,DKIN1980}). It is well known that minimization problems and variational inequality problems are closely related.
Therefore, Lucchetti and Patrone \cite{RLUC1981,RLUC1982} were the first to extend the well-posedness concept to VIs. Thereafter, the Levitin-Polyak well-posedness was studied by Hu et al. \cite{RHU2010}, as well as by Huang et al. \cite{XXHU2007,XXHU2009}. Later on, the concept of well-posedness by perturbations for mixed variational inequality problems was introduced by Feng et al. \cite{YPFA2010}.
On the other hand, many optimizers focused their work on numerical methods for solving variational inequalities and introduced and studied several numerical algorithms (see, for instance,  \cite{YCEN2011, YCEN22011, XJCA2014, QLDO2019, QLDO2018}) and references therein. Also, for various generalizations of \eqref{VIP}, we refer to \cite{SCFA1982,EBLU1994,RSAI1976,SDEY22023} and references therein.

One of the most important generalizations of variational inequality problem \eqref{VIP} is the mixed variational inequality problem (see \cite{KQWU2007} in Banach spaces). For various kinds of well-posedness of mixed variational inequality problems in different spaces, see \cite{YPFA2008,YPFA2010}. Later on, the \eqref{VIP} was generalized by many researchers to what is commonly known as the multivalued variational inequality problem (MVIP). The MVIP was first introduced by Browder \cite{FEBR1965} when studying maximal monotone operators and duality mappings in Banach spaces. Recently, the MVIP has been studied by Thang et al. \cite{TVTH2022}, where the authors have introduced proximal-type algorithms with self-adaptive step size and have established the weak convergence of the sequences generated by them. For the well-posedness of a class of MVIPs, we refer the interested reader to \cite{LCCE2008}. However, the well-posedness of the MVIP not yet been studied widely.

Censor et al. \cite[Section 2]{YCEN22012} introduced the general split inverse problem (SIP) in which there are given two vector spaces,
$X$ and $Y$, and a bounded linear operator $A:X \rightarrow Y$. In addition, two inverse problems are involved. The first one, denoted by IP$_{1}$, is
formulated in the space $X$ and the second one, denoted by IP$_{2}$, is formulated in the space $Y$. Given these data, the {\em Split Inverse Problem}
(SIP) is formulated as follows:
\begin{gather}\label{IP}
\text{Find a point }x^* \in X\text{ that solves IP}_{1} \tag{$\text{IP}_1$}\\
\text{such that}\notag\\
\text{the point }y^*=Ax^*\in Y\text{ solves IP}_{2}.\tag{$\text{IP}_2$}
\end{gather}

The SIP is quite general because it enables one to obtain various split problems by making different choices of IP$_{1}$ and IP$_{2}$. Important examples are the split variational inequality problem (SVIP) (which was introduced and studied by Censor et al. \cite{YCEN22012}), the split inclusion problem (which was introduced and studied by Moudafi \cite{AMOU2011}), the split convex feasibility problem \cite{YCEN1994} and the split equilibrium problem \cite{SDEY2023}. There are several approaches to solving many SIPs (see, for instance, \cite{YCEN22012,AMOU2011,DVHI2022} and references therein). Various kinds of well-posedness for different kinds of SIPs can be found in \cite{RHUA2016,SDEY2023}.

Motivated by the above split problems, we introduce in this paper a new class of split problems. Let $\mathscr{H}_1$ and $\mathscr{H}_2$ be two real Hilbert spaces,
 and $C$ and $Q$ be two nonempty, closed and convex subsets of $\mathscr{H}_1$ and $\mathscr{H}_2$, respectively.
 Let $f:\mathscr{H}_1\rightarrow\mathbb{R}$ and $g:\mathscr{H}_2\rightarrow\mathbb{R}$ be two functions. Let $\mathscr{B}_1:\mathscr{H}_1\rightarrow 2^{\mathscr{H}_1}$ and $\mathscr{B}_2:\mathscr{H}_2\rightarrow 2^{\mathscr{H}_2}$ be two strict multivalued mappings and let $A:\mathscr{H}_1\rightarrow\mathscr{H}_2$ be a bounded linear operator. With these data, the {\em split multivalued variational inequality problem} (SMVIP) is formulated as follows: find $(x^*,y^*)\in\mathscr{H}_1\times \mathscr{H}_2$ so that there exists $(u^*,v^*)\in\mathscr{B}_1(x^*)\times\mathscr{B}_2(y^*)$ such that
\begin{equation}\label{SMVI}
\begin{cases}
x^*\in C, y^*\in Q, y^*=Ax^*;\\
\left\langle u^*, x^*-x\right\rangle+f(x^*)-f(x)\leq 0 \; \; \forall x\in C;\\
\left\langle v^*, y^*-y\right\rangle+g(y^*)-g(y)\leq 0 \; \; \forall y\in Q.\tag{SMVIP}
\end{cases}
\end{equation}
The solution set of \eqref{SMVI} is denoted by $S$.

Our paper is organized as follows. In Section \ref{Sec:2} some useful definitions and results are collected. In Section \ref{Sec:2:2} we introduce the concepts of an approximating sequence and of a generalized approximating sequence for the \eqref{SMVI}. Also, in this section, we introduce several well-posedness concepts for the \eqref{SMVI}. In Section \ref{Sec:2:2:2} we present a metric characterization of LP well-posedness for the \eqref{SMVI} and also, in this section, we provide several examples to illustrate our theoretical results.
Next, in Section \ref{Sec:3}, we introduce the parametric \eqref{SMVI} and the concepts of an approximating sequence and a generalized approximating sequence for the \eqref{PSMVI}. Also in this section, we introduce various kinds of well-posedness for the the $\mathcal{PSMVIP}$. In Section \ref{Sec:4} we establish a metric characterization of (generalized) LP well-posedness for the $\mathcal{PSMVIP}$. In Section \ref{Sec:5} we discus some important special cases of the \eqref{SMVI}.

\section{Preliminaries}\label{Sec:2}
\noindent
In this section we shall collect some basic definitions and results, which are required for proving our main results.

Let $A$ and $B$ be two nonempty subsets of a real Hilbert space $\mathscr{H}$. The {\em Hausdorff metric} $H(\cdot, \cdot)$ between $A$ and $B$ is defined by
\begin{align*}
H(A,B) := \max\left\lbrace D(A,B), D(B,A)\right\rbrace,
\end{align*}
where $D(A,B) := \sup_{a\in A}d(a,B)$ and $d(a,B) := \inf_{b\in B}\|a-b\|$.
Let $\left\lbrace A_n\right\rbrace$ be a sequence of subsets of $\mathscr{H}$.
We say that the sequence $\left\lbrace A_n\right\rbrace$ converges to $A$ if $H(A_n,A)\rightarrow 0$.
It is not difficult to see that $\{D(A_n,A)\}$ $\rightarrow 0$ if and only if $\{d(a_n,A)\} \rightarrow 0$, uniformly for all selections $a_n\in A_n$.

The {\em diameter of a set} $A$ is defined by
\begin{align*}
\text{diam}(A) := \sup \left\lbrace \|x-y\|: x, y\in A\right\rbrace.
\end{align*}

The {\em Kuratowski measure of noncompactness} of a set $A$ is defined by
\begin{align}
\mu(A) := \inf\left\lbrace\epsilon>0: A\subset\cup_{i=1}^n A_i,  \text{diam}(A_i)<\epsilon,~ i=1,2,\cdots, n, ~n\in\mathbb{N} \right\rbrace. \nonumber.
\end{align}

\begin{lemma}
Let $A$ and $B$ be two nonempty subsets of a real Hilbert space $\mathscr{H}$. Then
\begin{align*}
\mu(A)\leq 2 H(A,B)+\mu(B).
\end{align*}
\end{lemma}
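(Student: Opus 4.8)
The plan is to bound the Kuratowski measure $\mu(A)$ by covering $A$ with finitely many small sets that are obtained by thickening a near-optimal finite cover of $B$. First I would fix an arbitrary $\epsilon > \mu(B)$ so that there exist sets $B_1,\dots,B_n$ with $B = \bigcup_{i=1}^n B_i$ and $\operatorname{diam}(B_i) < \epsilon$ for each $i$. I would also fix an arbitrary $\delta > H(A,B)$; by definition of the Hausdorff metric this gives $D(A,B) = \sup_{a\in A} d(a,B) \le H(A,B) < \delta$, so every point of $A$ lies within distance $\delta$ of $B$.

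Next I would define the enlarged sets $A_i := \{a \in A : d(a, B_i) < \delta\}$ for $i = 1,\dots,n$. Since each $a \in A$ satisfies $d(a,B) < \delta$ and $B = \bigcup_i B_i$, there is some $i$ with $d(a,B_i) < \delta$, hence $A \subseteq \bigcup_{i=1}^n A_i$. The key estimate is on the diameter: if $a, a' \in A_i$, pick $b, b' \in B_i$ with $\|a - b\| < \delta$ and $\|a' - b'\| < \delta$, and then the triangle inequality yields
\begin{align*}
\|a - a'\| \le \|a - b\| + \|b - b'\| + \|b' - a'\| < \delta + \operatorname{diam}(B_i) + \delta < 2\delta + \epsilon.
\end{align*}
Thus $\operatorname{diam}(A_i) \le 2\delta + \epsilon$, and by the definition of $\mu$ we obtain $\mu(A) \le 2\delta + \epsilon$ (strictly speaking, one replaces $A_i$ by a slightly larger radius to make the diameters strictly below $2\delta + \epsilon$, or simply notes $\mu(A) \le 2\delta + \epsilon$ follows by a further infinitesimal enlargement).

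Finally, since $\delta > H(A,B)$ and $\epsilon > \mu(B)$ were arbitrary, letting $\delta \downarrow H(A,B)$ and $\epsilon \downarrow \mu(B)$ gives $\mu(A) \le 2H(A,B) + \mu(B)$, as claimed. I do not expect any serious obstacle here; the only point requiring a little care is handling the strict-versus-nonstrict inequalities in the definition of $\mu$, which is dispatched by the standard trick of enlarging the covering sets by an arbitrarily small amount before invoking the infimum.
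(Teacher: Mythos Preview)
Your argument is correct and is the standard proof of this well-known inequality for the Kuratowski measure of noncompactness. The paper itself states this lemma in the preliminaries without proof, treating it as a known fact, so there is no ``paper's own proof'' to compare against; your covering-by-thickened-sets argument is exactly the routine justification one would supply.
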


It is important to mention that a multivalued mapping $F:\mathscr{H}\rightarrow 2^\mathscr{H}$ is said to be strict if $F(x)$ is nonempty for all $x\in\mathscr{H}$.

\begin{definition}\cite{UMOS1967,LCCE2008}
A strict weakly compact-valued mapping $B:\mathscr{H}\rightarrow 2^\mathscr{H}$ is said to be $H$-continuous at a point $x\in \mathscr{H}$ if for any $\epsilon>0$, there exists a number $\delta>0$ such that for all $y\in\mathscr{H}$ with $\|x-y\|<\delta$, one has $H(B(x),B(y))<\epsilon$. If this mapping $B:\mathscr{H}\rightarrow 2^\mathscr{H}$ is $\mathscr{H}$-continuous at each $x\in\mathscr{H}$, then we say that it is $\mathscr{H}$-continuous.
\end{definition}

\begin{lemma}\label{Lem:weaklycompact}\cite{HHBA2011}
Let $C$ be a nonempty subset of $\mathscr{H}$. Then the followings statements are equivalent:
\begin{enumerate}
\item $C$ is weakly compact.
\item $C$ is weakly sequentially compact.
\item $C$ is weakly closed and bounded.
\end{enumerate}
\end{lemma}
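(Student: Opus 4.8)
The plan is to establish the three implications $(1)\Rightarrow(3)$, $(3)\Rightarrow(1)$ and $(1)\Leftrightarrow(2)$ separately, the main inputs being reflexivity of the Hilbert space $\mathscr{H}$, the uniform boundedness principle, and the Eberlein--\v{S}mulian theorem.

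First, $(1)\Rightarrow(3)$. The continuous linear functionals separate the points of $\mathscr{H}$ (if $x\neq y$ then $\langle x-y,x\rangle\neq\langle x-y,y\rangle$), so the weak topology on $\mathscr{H}$ is Hausdorff and hence the weakly compact set $C$ is weakly closed. For boundedness, fix $x\in\mathscr{H}$; since $\langle x,\cdot\rangle$ is weakly continuous, $\{\langle x,c\rangle:c\in C\}$ is a compact, hence bounded, subset of $\mathbb{R}$. Thus the family $\{\langle\cdot,c\rangle:c\in C\}\subset\mathscr{H}^{*}$ is pointwise bounded, so by the uniform boundedness principle $\sup_{c\in C}\|c\|<\infty$. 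Next, $(3)\Rightarrow(1)$. By reflexivity of $\mathscr{H}$ together with the Banach--Alaoglu theorem, the closed unit ball $\overline{B}$ of $\mathscr{H}$ is weakly compact, and therefore so is $r\overline{B}$ for every $r>0$. Choosing $r$ with $C\subset r\overline{B}$ (possible since $C$ is bounded) and using that $C$ is weakly closed, one concludes that $C$, being a weakly closed subset of the weakly compact set $r\overline{B}$, is weakly compact.

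It remains to handle $(1)\Leftrightarrow(2)$, which I expect to be the main obstacle: unlike the previous steps, it is not a soft consequence of reflexivity and uniform boundedness, but rests on the Eberlein--\v{S}mulian theorem, which asserts precisely that a subset of a Banach space is weakly compact if and only if it is weakly sequentially compact. If one wishes to minimize the appeal to that theorem, the implication $(1)\Rightarrow(2)$ can be read off from it directly, while for $(2)\Rightarrow(1)$ one can argue via $(3)$: weak sequential compactness forces $C$ to be bounded, since an unbounded sequence in $C$ admits no weakly convergent subsequence (weakly convergent sequences being norm bounded by the uniform boundedness principle), and the only remaining point is to promote the weak sequential closedness of $C$ to weak closedness --- a step that again requires Eberlein--\v{S}mulian in general, though it is elementary when $C$ is separable because then the weak topology restricted to the bounded set $C$ is metrizable. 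Once $C$ is known to be weakly closed and bounded, $(3)\Rightarrow(1)$ finishes the argument.
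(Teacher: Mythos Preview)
The paper does not supply a proof of this lemma at all: it is stated with a citation to Bauschke--Combettes and left as a known result from the literature. Your argument is correct and is the standard one --- separating $(1)\Leftrightarrow(3)$ via reflexivity and Banach--Alaoglu together with the uniform boundedness principle, and handling $(1)\Leftrightarrow(2)$ by Eberlein--\v{S}mulian --- so there is nothing to compare against, and your proposal simply fills in what the paper takes for granted.
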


\begin{lemma}\label{Lem:2}
Let $C$ be a nonempty convex subset of a real Hilbert space $\mathscr{H}$. Then for each $x\in\mathscr{H}$, the distance function $d(x,C)$ (defined above) is continuous and convex.
\end{lemma}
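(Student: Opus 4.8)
The plan is to treat the two asserted properties separately, since each reduces to an elementary estimate based on the triangle inequality (and, for convexity, on convexity of the norm).

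First I would prove continuity — in fact, $1$-Lipschitz continuity — and I note that this part does not use convexity of $C$ at all. Fix $x,y\in\mathscr{H}$. For every $c\in C$, the triangle inequality gives $\|x-c\|\leq\|x-y\|+\|y-c\|$; taking the infimum over $c\in C$ on the right-hand side yields $d(x,C)\leq\|x-y\|+d(y,C)$, and interchanging the roles of $x$ and $y$ gives $|d(x,C)-d(y,C)|\leq\|x-y\|$. Continuity of $d(\cdot,C)$ is then immediate.

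Next I would establish convexity, and this is where the hypothesis that $C$ is convex is used. Fix $x,y\in\mathscr{H}$ and $\lambda\in[0,1]$. Given $\epsilon>0$, choose $c_1,c_2\in C$ with $\|x-c_1\|<d(x,C)+\epsilon$ and $\|y-c_2\|<d(y,C)+\epsilon$, which is possible by the definition of the infimum. Since $C$ is convex, $\lambda c_1+(1-\lambda)c_2\in C$, and therefore
\begin{align*}
d(\lambda x+(1-\lambda)y,C) &\leq \|\lambda x+(1-\lambda)y-\lambda c_1-(1-\lambda)c_2\|\\
&\leq \lambda\|x-c_1\|+(1-\lambda)\|y-c_2\|\\
&< \lambda\, d(x,C)+(1-\lambda)\, d(y,C)+\epsilon.
\end{align*}
Letting $\epsilon\to 0^{+}$ yields the convexity inequality.

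There is no substantial obstacle here; the only point requiring a little care is that one must pick the near-optimal points $c_1,c_2$ \emph{before} invoking convexity of $C$, since the infimum need not be attained (this would require $C$ to be closed as well, which is not assumed), and then exploit convexity of the norm to push the estimate through. I expect the whole argument to occupy only a few lines once these near-minimizers are in place.
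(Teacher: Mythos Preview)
Your proof is correct. The paper states this lemma as a preliminary fact without proof, so there is nothing to compare against; your argument is the standard one and is entirely adequate.
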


Next, we recall a result of Nadler's.

\begin{theorem}[Nadler, \cite{RLUC1981}]     \label{Nadler Theorem}
Let $(X, \|\cdot\|)$ be a normed vector space and let $CB(X)$ be the collection of all nonempty, closed and bounded subsets of $X$. If $U$ and $V$ belong to $CB(X)$, then for any $\epsilon>0$ and any point $x\in U$, there exists a point $y \in V$ such that $\|x-y\|\leq (1+\epsilon)H(U,V)$. In particular, if both $U$ and $V$ are compact, then there exists a point $y \in V$ such that $\|x-y\|\leq H(U,V)$.
\end{theorem}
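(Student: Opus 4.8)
The plan is to derive both assertions directly from the definition of the Hausdorff metric and the definition of the infimum, with no machinery beyond that. First I would record the elementary observation that for every $x\in U$,
\[
d(x,V)\;\le\;D(U,V)\;\le\;H(U,V),
\]
which is immediate from $D(U,V)=\sup_{a\in U}d(a,V)$ and $H(U,V)=\max\{D(U,V),D(V,U)\}$.

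To prove the first statement, I would distinguish two cases. If $H(U,V)=0$, then $d(x,V)=0$, and since $V$ is closed (being a member of $CB(X)$) this forces $x\in V$; the choice $y=x$ then gives $\|x-y\|=0=(1+\epsilon)H(U,V)$. If $H(U,V)>0$, put $\delta:=\epsilon\,H(U,V)>0$. Since $d(x,V)=\inf_{b\in V}\|x-b\|$, the definition of the infimum yields a point $y\in V$ with $\|x-y\|<d(x,V)+\delta$; combining this with $d(x,V)\le H(U,V)$ gives
\[
\|x-y\|<H(U,V)+\epsilon\,H(U,V)=(1+\epsilon)H(U,V),
\]
which is the desired inequality.

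For the ``in particular'' clause it suffices to assume that $V$ is compact (compactness of $U$ is not actually needed). The function $b\mapsto\|x-b\|$ is continuous on $X$, hence it attains its infimum over the compact set $V$ at some point $y\in V$; consequently $\|x-y\|=d(x,V)\le H(U,V)$.

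I do not anticipate any genuine difficulty here: the argument is essentially a bookkeeping exercise with the definitions. The only places that call for a little care are the degenerate case $H(U,V)=0$, where the closedness of $V$ is what allows us to replace ``distance zero'' by ``membership,'' and keeping the strict versus non-strict inequalities consistent when passing from the defining property of the infimum to the final estimate.
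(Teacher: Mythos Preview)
Your argument is correct. The first part is a direct unwinding of the definitions of $d(x,V)$, $D(U,V)$, and $H(U,V)$, and your case split on whether $H(U,V)=0$ handles the degenerate situation cleanly using closedness of $V$. For the compact clause you rightly observe that only compactness of $V$ is needed, since the continuous function $b\mapsto\|x-b\|$ then attains its infimum on $V$.

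As for comparison with the paper: the paper does not supply a proof of this theorem. It is recorded in the Preliminaries section as a known result attributed to Nadler (with the citation \cite{RLUC1981}) and is then used as a black box in the subsequent arguments. So there is no ``paper's own proof'' to compare against; your proposal simply fills in the standard verification that the paper chose to omit.
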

\section{Split Multivalued Variational Inequality Problem \eqref{SMVI}}
\label{Sec:2:2}

\begin{definition}
A sequence $\{(x_n,y_n)\}\subset \mathscr{H}_1\times\mathscr{H}_2$ is said to be an approximating sequence for \eqref{SMVI} if there exist a sequence $\{(u_n,v_n)\}\subset\mathscr{H}_1\times\mathscr{H}_2$ with $u_n\in\mathscr{B}_1(x_n)$ and $v_n\in\mathscr{B}_2(y_n)$, and a positive sequence $\{\epsilon_n\}$ of real numbers with $\epsilon_n\rightarrow 0$ such that
\begin{equation}\label{ASSMVI}
\begin{cases}
x_n\in C, y_n\in Q, \|y_n-Ax_n\|\leq \epsilon_n;\\
\left\langle u_n, x_n-x\right\rangle+f(x_n)-f(x)\leq\epsilon_n \; \; \forall x\in C;\\
\left\langle v_n, y_n-y\right\rangle+g(y_n)-g(y)\leq\epsilon_n \; \; \forall y\in Q.\tag{ASMVI}
\end{cases}
\end{equation}
\end{definition}

\begin{definition}
The \eqref{SMVI} is said to be well-posed if $S$ is a singleton and every approximating sequence for \eqref{SMVI} converges to the unique solution. The \eqref{SMVI} is said well-posed in the generalized sense if $S$ is nonempty and every approximating sequence for \eqref{SMVI} has a subsequence which converges to some element of $S$.
\end{definition}

\begin{definition}
A sequence $\{(x_n,y_n)\}\subset\mathscr{H}_1\times\mathscr{H}_2$ is said to be a generalized approximating sequence for \eqref{SMVI} if there exist a sequence $\{(u_n,v_n)\}\subset\mathscr{H}_1\times\mathscr{H}_2$ with $u_n\in\mathscr{B}_1(x_n)$ and $v_n\in\mathscr{B}_2(y_n)$, and a positive sequence $\{\epsilon_n\}$ of real numbers with $\epsilon_n\rightarrow 0$ such that
\begin{equation}\label{GSMVI}
\begin{cases}
d(x_n,C)\leq\epsilon_n, d(y_n,Q)\leq\epsilon_n, \|y_n-Ax_n\|\leq \epsilon_n;\\
\left\langle u_n, x_n-x\right\rangle+f(x_n)-f(x)\leq\epsilon_n \; \; \forall x\in C;\\
\left\langle v_n, y_n-y\right\rangle+g(y_n)-g(y)\leq\epsilon_n \; \; \forall y\in Q.\tag{GASMVI}
\end{cases}
\end{equation}
\end{definition}

\begin{definition}
The \eqref{SMVI} is said to be LP well-posed if $S$ is a singleton and each generalized approximating sequence for \eqref{SMVI} converges to the unique solution. The \eqref{SMVI} is said to be LP well-posed in the generalized sense if $S$ is nonempty and every generalized approximating sequence for \eqref{SMVI} has a subsequence which converges to some element of $S$.
\end{definition}

For each $\epsilon> 0$, consider the following approximate solution set for \eqref{SMVI}:
\begin{multline}
S(\epsilon) := \Big\{(z, w)\in\mathscr{H}_1\times\mathscr{H}_2:d(z,C)\leq\epsilon, d(w,Q)\leq\epsilon, \|w-Az\|\leq\epsilon; \nonumber\\
\text{ there exist } u\in\mathscr{B}_1(z), v\in\mathscr{B}_2(w) \text{ such that }\\
\left\langle u, z-x\right\rangle+f(z)-f(x)\leq\epsilon \; \; \forall x\in C;\\
\left\langle v, w-y\right\rangle+g(w)-g(y)\leq\epsilon \; \; \forall y\in Q\Big\}.
\end{multline}

\begin{remark}\label{rmk5}
It is clear that
\begin{enumerate}
\item $S=S(0)$ and $S\subset S(\epsilon)$ for each $\epsilon>0$.
\item  $S(\epsilon_1)\subset S(\epsilon_2)$ for any $0<\epsilon_1\leq\epsilon_2$.
\end{enumerate}
\end{remark}

\begin{lemma}
Let $\mathscr{H}_1$ and $\mathscr{H}_2$ be two real Hilbert spaces, and let $C$ and $Q$ be two nonempty, closed and convex subsets of $\mathscr{H}_1$ and $\mathscr{H}_2$, respectively. Let $f:\mathscr{H}_1\rightarrow\mathbb{R}$ and $g:\mathscr{H}_2\rightarrow\mathbb{R}$ be two continuous functions. Let $\mathscr{B}_1:\mathscr{H}_1\rightarrow 2^{\mathscr{H}_1}$ and $\mathscr{B}_2:\mathscr{H}_2\rightarrow 2^{\mathscr{H}_2}$ be two strict weakly compact-valued and $H$-continuous mappings. Then the solution set $S$ of \eqref{SMVI} is closed.
\end{lemma}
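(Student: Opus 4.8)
The plan is to prove that $S$ is sequentially closed, which suffices since $\mathscr{H}_1\times\mathscr{H}_2$ is a metric space. So I would take an arbitrary sequence $\{(x_n,y_n)\}\subset S$ with $(x_n,y_n)\to(\bar x,\bar y)$ in $\mathscr{H}_1\times\mathscr{H}_2$ and show $(\bar x,\bar y)\in S$. The feasibility part is immediate: since $C$ and $Q$ are closed we get $\bar x\in C$ and $\bar y\in Q$, and since $A$ is a bounded (hence continuous) linear operator, $Ax_n\to A\bar x$ while $y_n=Ax_n\to\bar y$, so $\bar y=A\bar x$. The real content is to produce selections $u^*\in\mathscr{B}_1(\bar x)$ and $v^*\in\mathscr{B}_2(\bar y)$ that witness the two variational inequalities at $(\bar x,\bar y)$.

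First I would fix, for each $n$, selections $u_n\in\mathscr{B}_1(x_n)$ and $v_n\in\mathscr{B}_2(y_n)$ coming from $(x_n,y_n)\in S$. Since $\mathscr{B}_1$ is weakly compact-valued, each set $\mathscr{B}_1(x_n)$ and $\mathscr{B}_1(\bar x)$ is, by Lemma \ref{Lem:weaklycompact}, weakly closed and bounded, hence in particular norm-closed and bounded, so it belongs to $CB(\mathscr{H}_1)$ and Theorem \ref{Nadler Theorem} applies. Taking $\epsilon=1$ there, for each $n$ there is $\bar u_n\in\mathscr{B}_1(\bar x)$ with $\|u_n-\bar u_n\|\leq 2H(\mathscr{B}_1(x_n),\mathscr{B}_1(\bar x))$; since $\mathscr{B}_1$ is $H$-continuous at $\bar x$ and $x_n\to\bar x$, the right-hand side tends to $0$, so $\|u_n-\bar u_n\|\to0$. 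The sequence $\{\bar u_n\}$ lies in the weakly compact, hence weakly sequentially compact, set $\mathscr{B}_1(\bar x)$, so along a subsequence $\bar u_{n_k}\rightharpoonup u^*$ with $u^*\in\mathscr{B}_1(\bar x)$ (weak compactness gives weak closedness), and therefore $u_{n_k}\rightharpoonup u^*$ as well. Applying the same argument to $\{v_{n_k}\}$ yields a further subsequence, still denoted $\{n_k\}$, and a point $v^*\in\mathscr{B}_2(\bar y)$ with $v_{n_k}\rightharpoonup v^*$.

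Next I would pass to the limit in the inequalities. Fix $x\in C$; from $(x_{n_k},y_{n_k})\in S$ we have $\langle u_{n_k},x_{n_k}-x\rangle+f(x_{n_k})-f(x)\leq0$. Splitting $\langle u_{n_k},x_{n_k}-x\rangle=\langle u_{n_k},x_{n_k}-\bar x\rangle+\langle u_{n_k},\bar x-x\rangle$, the first term is bounded in modulus by $\|u_{n_k}\|\,\|x_{n_k}-\bar x\|\to0$ (a weakly convergent sequence is norm-bounded), while the second converges to $\langle u^*,\bar x-x\rangle$; combining this with the continuity of $f$ gives $\langle u^*,\bar x-x\rangle+f(\bar x)-f(x)\leq0$. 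As $x\in C$ was arbitrary, the first inequality of \eqref{SMVI} holds at $\bar x$ with $u^*$, and the identical argument with $g$, $v^*$ and $Q$ gives the second one at $\bar y$. Together with $\bar x\in C$, $\bar y\in Q$, $\bar y=A\bar x$, $u^*\in\mathscr{B}_1(\bar x)$ and $v^*\in\mathscr{B}_2(\bar y)$, this shows $(\bar x,\bar y)\in S$, so $S$ is closed.

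I expect the only delicate point to be the handling of the multivalued data: one must correctly combine $H$-continuity, Nadler's theorem and weak sequential compactness to extract a limiting selection in $\mathscr{B}_1(\bar x)$ (respectively $\mathscr{B}_2(\bar y)$), and then use the weak–strong pairing $\langle u_{n_k},x_{n_k}-x\rangle\to\langle u^*,\bar x-x\rangle$ to pass to the limit in the nonlinear inequalities. The feasibility and continuity parts are routine.
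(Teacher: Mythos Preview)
Your proof is correct and follows essentially the same route as the paper's: take a convergent sequence in $S$, use closedness of $C,Q$ and continuity of $A$ for feasibility, apply Nadler's theorem together with $H$-continuity to approximate $u_n$ by elements of $\mathscr{B}_1(\bar x)$, extract a weakly convergent subsequence by weak (sequential) compactness, and pass to the limit in the inequalities. The only cosmetic differences are that the paper uses the Nadler constant $(1+1/n)$ rather than your fixed $2$, and that you spell out the weak--strong splitting $\langle u_{n_k},x_{n_k}-x\rangle=\langle u_{n_k},x_{n_k}-\bar x\rangle+\langle u_{n_k},\bar x-x\rangle$ where the paper simply says ``taking the limit''.
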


\begin{proof}
Let the sequence $\{(x_n,y_n)\}\subset S$ be such that $(x_n,y_n)\rightarrow (\tilde{x},\tilde{y})$. Then there exists a sequence $\{(u_n,v_n)\}\subset\mathscr{H}_1\times\mathscr{H}_2$ with $u_n\in\mathscr{B}_1(x_n)$, $v_n\in\mathscr{B}_2(y_n)$ such that
\begin{equation}\label{ASSMVI2}
\begin{cases}
x_n\in C, y_n\in Q, y_n=Ax_n;\\
\left\langle u_n, x_n-x\right\rangle+f(x_n)-f(x)\leq 0 \; \; \forall x\in C;\\
\left\langle v_n, y_n-y\right\rangle+g(y_n)-g(y)\leq 0\; \; \forall y\in Q.
\end{cases}
\end{equation}

Since $C$ and $Q$ are closed and $A$ is a bounded linear operator, $(\tilde{x},\tilde{y})\in C\times Q$ and $\tilde{y}=A\tilde{x}$. Again, since $\mathscr{B}_1(x_n)$ and $\mathscr{B}_1(\tilde{x})$ are nonempty and weakly compact, $\mathscr{B}_1(x_n), \mathscr{B}_1(\tilde{x})\in CB(\mathscr{H}_1)$ for all $n\in\mathbb{N}$. Therefore, by Theorem \ref{Nadler Theorem}, for each $u_n\in\mathscr{B}_1(x_n)$, there exists $\tilde{v}_n\in\mathscr{B}_1(\tilde{x})$ such that
\begin{align*}
\|u_n-\tilde{v}_n\|\leq (1+1/n)H(\mathscr{B}_1(x_n),\mathscr{B}_1(\tilde{x})).
\end{align*}

Taking the limit as $n\rightarrow\infty$ in the above inequality and using the $H$-continuity of $\mathscr{B}_1$, we get $u_n-\tilde{v}_n\rightarrow 0$. Since $\mathscr{B}_1(\tilde{x})$ is weakly compact, without loss of generality, we may assume that $\tilde{v}_n\rightharpoonup \tilde{u}\in\mathscr{B}_1(\tilde{x})$ and consequently, $u_n\rightharpoonup\tilde{u}\in\mathscr{B}_1(\tilde{x})$. Similarly, we can find $\tilde{v}$ such that $v_n\rightharpoonup\tilde{v}\in\mathscr{B}_2(\tilde{y})$.

Now, taking the limit in \eqref{ASSMVI2} and using our assumptions, we get
\begin{equation*}
\begin{cases}
\tilde{x}\in C, \tilde{y}\in Q, \tilde{y}=A\tilde{x};\\
\left\langle \tilde{u}, \tilde{x}-x\right\rangle+f(\tilde{x})-f(x)\leq 0 \; \; \forall x\in C;\\
\left\langle \tilde{v}, \tilde{y}-y\right\rangle+g(\tilde{y})-g(y)\leq 0 \; \; \forall y\in Q.
\end{cases}
\end{equation*}
Thus, $(\tilde{x},\tilde{y})\in S$ and hence $S$ is closed.
This completes the proof.
\end{proof}
\section{Metric characterization of well-posedness for \eqref{SMVI}}
\label{Sec:2:2:2}

\begin{theorem}\label{Thm13:2}
Let $\mathscr{H}_1$ and $\mathscr{H}_2$ be two real Hilbert spaces, and let $C$ and $Q$ be two nonempty, closed and convex subsets of $\mathscr{H}_1$ and $\mathscr{H}_2$, respectively. Let $f:\mathscr{H}_1\rightarrow\mathbb{R}$ and $g:\mathscr{H}_2\rightarrow\mathbb{R}$ be two continuous functions. Let $\mathscr{B}_1:\mathscr{H}_1\rightarrow 2^{\mathscr{H}_1}$ and $\mathscr{B}_2:\mathscr{H}_2\rightarrow 2^{\mathscr{H}_2}$ be two strict weakly compact-valued and $H$-continuous mappings.
Let $A:\mathscr{H}_1\rightarrow\mathscr{H}_2$ be a bounded linear operator.
Then the \eqref{SMVI} is LP well-posed if and only if
\begin{align}\label{Thm13:con1:2}
S(\epsilon)\neq\emptyset \quad \forall\epsilon>0 \text{ and } \text{diam}(S(\epsilon))\rightarrow 0 \text{ as }(\epsilon)\rightarrow 0.
\end{align}
\end{theorem}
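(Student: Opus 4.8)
The plan is to establish the two implications separately, leaning on the elementary observations in Remark \ref{rmk5}: that $S=S(0)$, that $S\subset S(\epsilon)$ for every $\epsilon>0$, and that the family $\{S(\epsilon)\}_{\epsilon>0}$ is nondecreasing in $\epsilon$.

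\emph{Necessity.} Assume \eqref{SMVI} is LP well-posed, so $S=\{(\bar x,\bar y)\}$ is a singleton. Then $S(\epsilon)\supset S\neq\emptyset$ for every $\epsilon>0$, which is the first half of \eqref{Thm13:con1:2}. To get $\mathrm{diam}(S(\epsilon))\to 0$, I would argue by contradiction: if this fails, there are a constant $\alpha>0$, a sequence $\epsilon_n\downarrow 0$, and pairs $(z_n,w_n),(z_n',w_n')\in S(\epsilon_n)$ with $\|(z_n,w_n)-(z_n',w_n')\|>\alpha$. By the very definition of $S(\epsilon_n)$, both $\{(z_n,w_n)\}$ and $\{(z_n',w_n')\}$ are generalized approximating sequences for \eqref{SMVI} (with gap sequence $\{\epsilon_n\}$), so LP well-posedness forces both to converge to $(\bar x,\bar y)$, contradicting the separation.

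\emph{Sufficiency.} Assume \eqref{Thm13:con1:2}. Fix any sequence $\epsilon_n\downarrow 0$ and choose $(z_n,w_n)\in S(\epsilon_n)$. By monotonicity of $\{S(\epsilon)\}$, for $m\ge n$ we have $(z_m,w_m)\in S(\epsilon_n)$, hence $\|(z_n,w_n)-(z_m,w_m)\|\le\mathrm{diam}(S(\epsilon_n))\to 0$; thus $\{(z_n,w_n)\}$ is Cauchy and converges to some $(\bar x,\bar y)$ in the complete space $\mathscr{H}_1\times\mathscr{H}_2$. The main step is to verify $(\bar x,\bar y)\in S$. Since $d(z_n,C)\le\epsilon_n$, $d(w_n,Q)\le\epsilon_n$ and $C,Q$ are closed, $\bar x\in C$ and $\bar y\in Q$; since $A$ is bounded linear and $\|w_n-Az_n\|\le\epsilon_n$, we get $\bar y=A\bar x$. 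For the variational parts I would reproduce the argument used in the proof that $S$ is closed: taking $u_n\in\mathscr{B}_1(z_n)$ as in the definition of $S(\epsilon_n)$, Nadler's theorem (Theorem \ref{Nadler Theorem}) yields $\tilde u_n\in\mathscr{B}_1(\bar x)$ with $\|u_n-\tilde u_n\|\le(1+1/n)H(\mathscr{B}_1(z_n),\mathscr{B}_1(\bar x))\to 0$ by $H$-continuity, and the weak compactness of $\mathscr{B}_1(\bar x)$ lets us pass to a subsequence along which $\tilde u_n\rightharpoonup\bar u\in\mathscr{B}_1(\bar x)$, whence $u_n\rightharpoonup\bar u$; similarly $v_n\rightharpoonup\bar v\in\mathscr{B}_2(\bar y)$. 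Letting $n\to\infty$ in $\langle u_n,z_n-x\rangle+f(z_n)-f(x)\le\epsilon_n$, using $z_n\to\bar x$ strongly, $u_n\rightharpoonup\bar u$, continuity of $f$, and $\epsilon_n\to 0$, gives $\langle\bar u,\bar x-x\rangle+f(\bar x)-f(x)\le 0$ for all $x\in C$, and likewise for $g$ on $Q$. Hence $(\bar x,\bar y)\in S$ and $S\neq\emptyset$.

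To finish, since $S\subset S(\epsilon)$ we have $\mathrm{diam}(S)\le\mathrm{diam}(S(\epsilon))\to 0$, so $S=\{(\bar x,\bar y)\}$ is a singleton. Given any generalized approximating sequence $\{(x_n,y_n)\}$ with gap sequence $\{\epsilon_n\}$, $\epsilon_n\to 0$, its definition gives $(x_n,y_n)\in S(\epsilon_n)$, and since also $(\bar x,\bar y)\in S\subset S(\epsilon_n)$ we obtain $\|(x_n,y_n)-(\bar x,\bar y)\|\le\mathrm{diam}(S(\epsilon_n))\to 0$; therefore $(x_n,y_n)\to(\bar x,\bar y)$ and \eqref{SMVI} is LP well-posed. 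An equivalent route for sufficiency is to first verify that each $S(\epsilon)$ is closed (by the same limiting argument, with the sharp inequalities replaced by $\le\epsilon$) and then apply Cantor's intersection theorem to the nested family $\{S(\epsilon_n)\}$. The anticipated technical obstacle is precisely the passage to the limit in the two variational inequalities: one must combine Nadler's theorem, $H$-continuity, and weak compactness of the images of $\mathscr{B}_1,\mathscr{B}_2$ to produce weakly convergent selections whose limits lie in $\mathscr{B}_1(\bar x)$, respectively $\mathscr{B}_2(\bar y)$, and then control the strong-weak pairings together with the vanishing errors $\epsilon_n$.
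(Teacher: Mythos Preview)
Your proof is correct and follows essentially the same approach as the paper's: the necessity argument is identical (contradiction via two generalized approximating sequences failing to coalesce), and in the sufficiency direction you use the same technical core---Nadler's theorem, $H$-continuity, and weak sequential compactness of the images of $\mathscr{B}_1,\mathscr{B}_2$---to pass to the limit in the variational inequalities. The only organizational difference is that the paper takes an \emph{arbitrary} generalized approximating sequence, shows it is Cauchy, and proves its limit lies in $S$ in one stroke, whereas you first manufacture one element of $S$ from a chosen sequence and then dispatch arbitrary generalized approximating sequences via the clean diameter estimate $\|(x_n,y_n)-(\bar x,\bar y)\|\le\mathrm{diam}(S(\epsilon_n))$; this is a harmless and arguably tidier rearrangement of the same argument.
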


\begin{proof}
Suppose \eqref{SMVI} is LP well-posed. Then, by definition, $S$ is a singleton. Since $S \subset S(\epsilon)$ for $\epsilon>0$,
$S(\epsilon)\neq\emptyset$ for all $\epsilon>0$.

We claim that $\text{diam}(S(\epsilon))\rightarrow 0 \text{ as }\epsilon\rightarrow 0$.
Suppose to the contrary that $\text{diam}S(\epsilon)\nrightarrow 0$ as $\epsilon\rightarrow 0$.

Then there exist $\bar{\delta}>0, 0<\epsilon_n\rightarrow 0, (z_n,w_n)\in S(\epsilon_n), (z_n^\prime,w_n^\prime)\in S(\epsilon_n)$ such that
\begin{align}\label{ineq1:2}
\|(z_n,w_n)-(z_n^\prime,w_n^\prime)\|>\bar{\delta} \; \;  \forall n\in\mathbb{N}.
\end{align}

Since $(z_n,w_n)\in S(\epsilon_n), (z_n^\prime,w_n^\prime)\in S(\epsilon_n)$, there exist $u_n\in\mathscr{B}_1(z_n)$, $v_n\in \mathscr{B}_2(w_n)$ and $u_n^\prime\in\mathscr{B}_1(z_n^\prime)$, $v_n^\prime\in\mathscr{B}_2(w_n^\prime)$ such that
\begin{equation*}
\begin{cases}
d(z_n,C)\leq\epsilon_n, d(w_n,Q)\leq\epsilon_n, \|w_n-Az_n\|\leq \epsilon_n;\\
\left\langle u_n, z_n-x\right\rangle+f(z_n)-f(x)\leq\epsilon_n \; \; \forall x\in C;\\
\left\langle v_n, w_n-y\right\rangle+g(w_n)-g(y)\leq\epsilon_n \; \; \forall y\in Q
\end{cases}
\end{equation*}
and
\begin{equation*}
\begin{cases}
d(z_n^\prime,C)\leq\epsilon_n, d(w_n^\prime,Q)\leq\epsilon_n, \|w_n^\prime-Az_n^\prime\|\leq \epsilon_n;\\
\left\langle u_n^\prime, z_n^\prime-x\right\rangle+f(z_n^\prime)-f(x)\leq\epsilon_n \; \; \forall x\in C;\\
\left\langle v_n^\prime, w_n^\prime-y\right\rangle+g(w_n^\prime)-g(y)\leq\epsilon_n \; \; \forall y\in Q.
\end{cases}
\end{equation*}

This implies that $\{(z_n,w_n)\}$, $\{(z_n^\prime,w_n^\prime)\}$ are generalized approximating sequence for the \eqref{SMVI}. Therefore, by the definition of LP well-posedness,
the sequences $\{(z_n,w_n)\}$ and $\{(z_n^\prime,w_n^\prime)\}$ converge to the unique solution of \eqref{SMVI}, which contradicts \eqref{ineq1:2}.
Thus, our claim does hold after all.

Conversely, suppose that \eqref{Thm13:con1:2} holds. Since $S\subset S(\epsilon)$ for any $\epsilon>0$, $S$ has at most one element.

Let $\{(z_n,w_n)\}\subset\mathscr{H}_1\times\mathscr{H}_2$ be any generalized approximating sequence for \eqref{SMVI}. Then there exist a sequence $\{(u_n,v_n)\}\subset\mathscr{H}_1\times\mathscr{H}_2$ with $u_n\in\mathscr{B}_1(z_n)$, $v_n\in\mathscr{B}_2(w_n)$ and a positive sequence $\{\epsilon_n\}$ of real numbers with $\epsilon_n\rightarrow 0$ such that
\begin{equation}\label{AS1:2}
\begin{cases}
d(z_n,C)\leq\epsilon_n, d(w_n,Q)\leq\epsilon_n, \|w_n-Az_n\|\leq \epsilon_n;\\
\left\langle u_n, z_n-x\right\rangle+f(z_n)-f(x)\leq\epsilon_n \; \ \forall x\in C;\\
\left\langle v_n, w_n-y\right\rangle+g(w_n)-g(y)\leq\epsilon_n \; \; \forall y\in Q.
\end{cases}
\end{equation}

Thus $(z_n,w_n)\in S(\epsilon_n)$.
Since diam$(S(\epsilon))\rightarrow 0 \text{ as } \epsilon\rightarrow 0$, $\{(z_n,w_n)\}$ is a Cauchy sequence. Therefore, $\{(z_n,w_n)\}$ converges strongly to a point $(\tilde{z},\tilde{w})\in\mathscr{H}_1\times\mathscr{H}_2$.

Since $\mathscr{B}_1(z_n)$ and $\mathscr{B}_1(\tilde{z})$ are nonempty weakly compact, we have $\mathscr{B}_1(z_n), \mathscr{B}_1(\tilde{z})\in CB(\mathscr{H}_1)$ for all $n\in\mathbb{N}$. Therefore, by Theorem \ref{Nadler Theorem}, for each $u_n\in\mathscr{B}_1(z_n)$, there exists $\tilde{v}_n\in\mathscr{B}_1(\tilde{z})$ such that
\begin{align*}
\|u_n-\tilde{v}_n\|\leq (1+1/n) H(\mathscr{B}_1(z_n),\mathscr{B}_1(\tilde{z}))\rightarrow 0 \text{ as } n\rightarrow\infty.
\end{align*}

This implies that $u_n-\tilde{v}_n\rightarrow 0$ as $n\rightarrow\infty$. Since $\mathscr{B}_1(\tilde{z})$ is weakly compact, without loss of generality, we may assume that $\tilde{v}_n\rightharpoonup \tilde{u}\in\mathscr{B}_1(\tilde{z})$. This implies that $u_n\rightharpoonup\tilde{u}\in\mathscr{B}_1(\tilde{z})$ as $n\rightarrow\infty$. Similarly, using the assumptions on $\mathscr{B}_2$, we can find a point $\tilde{v}$ such that $v_n\rightharpoonup\tilde{v}\in\mathscr{B}_2(\tilde{w})$.

Taking $n\rightarrow\infty$ in \eqref{AS1:2} and using our assumptions, we obtain
\begin{equation*}
\begin{cases}
\tilde{z}\in C, \tilde{w}\in Q, \tilde{w}=A\tilde{z};\\
\left\langle \tilde{u}, \tilde{z}-x\right\rangle+\varphi(\tilde{z},p)-\varphi(x,p)\leq 0 \; \; \forall x\in C;\\
\left\langle \tilde{v}, \tilde{w}-y\right\rangle+\psi(\tilde{w},p)-\psi(y,p)\leq 0 \; \; \forall y\in Q.
\end{cases}
\end{equation*}

This implies that $(\tilde{z},\tilde{w})\in S$ and that every generalized approximate sequence for \eqref{SMVI} converges to $(\tilde{z},\tilde{w})$.
Since we already know that $S$ has at most one element, it follows that it is a singleton.
Consequently, the \eqref{SMVI} is  LP well-posed and the proof is complete.
\end{proof}

\begin{theorem}\label{Thm16:2}
Let $\mathscr{H}_1$ and $\mathscr{H}_2$ be two real Hilbert spaces, and let $C$ and $Q$ be two nonempty, closed and convex subsets of $\mathscr{H}_1$ and $\mathscr{H}_2$, respectively. Let $f:\mathscr{H}_1\rightarrow\mathbb{R}$ and $g:\mathscr{H}_2\rightarrow\mathbb{R}$ be two continuous functions. Let $\mathscr{B}_1:\mathscr{H}_1\rightarrow 2^{\mathscr{H}_1}$ and $\mathscr{B}_2:\mathscr{H}_2\rightarrow 2^{\mathscr{H}_2}$ be two strict weakly compact-valued and $H$-continuous mappings.
Let $A:\mathscr{H}_1\rightarrow\mathscr{H}_2$ be a bounded linear operator. Then the \eqref{SMVI} is LP well-posed in the generalized sense if and only if
\begin{align}\label{ineq13:2}
S(\epsilon)\neq\emptyset \; \;  \forall\epsilon>0 \text{ and } \mu(S(\epsilon))\rightarrow 0 \text{ as }\epsilon\rightarrow 0.
\end{align}
\end{theorem}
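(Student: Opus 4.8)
The plan is to parallel the proof of Theorem \ref{Thm13:2}, with the diameter replaced by the Kuratowski measure of noncompactness $\mu$ and with the classical Kuratowski theorem on decreasing sequences of sets playing the role that the Cauchy criterion played there. Throughout, $\mathscr{H}_1\times\mathscr{H}_2$ is regarded as the complete metric space with the product norm.

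\textbf{Necessity.} Assume \eqref{SMVI} is LP well-posed in the generalized sense. Since $S\neq\emptyset$ and $S\subset S(\epsilon)$ by Remark \ref{rmk5}, we get $S(\epsilon)\neq\emptyset$ for every $\epsilon>0$. I would first check that $S$ is compact: any sequence $\{(x_n,y_n)\}\subset S$ is trivially a generalized approximating sequence (pick $u_n\in\mathscr{B}_1(x_n)$, $v_n\in\mathscr{B}_2(y_n)$ realizing the exact inequalities and take $\epsilon_n=1/n$), hence has a subsequence converging to a point of $S$; thus $S$ is sequentially, and therefore, compact, and $\mu(S)=0$. Next I would show $H(S(\epsilon),S)\to0$ as $\epsilon\to0$. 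Since $S\subset S(\epsilon)$ one has $H(S(\epsilon),S)=D(S(\epsilon),S)$; if this failed to tend to $0$, there would exist $\tau>0$, $\epsilon_n\downarrow0$ and $(z_n,w_n)\in S(\epsilon_n)$ with $d\big((z_n,w_n),S\big)>\tau$. But, directly from the definition of $S(\epsilon_n)$, $\{(z_n,w_n)\}$ is a generalized approximating sequence for \eqref{SMVI}, so some subsequence converges to a point of $S$, contradicting $d((z_n,w_n),S)>\tau$. Finally, the first lemma of Section \ref{Sec:2} gives $\mu(S(\epsilon))\leq 2H(S(\epsilon),S)+\mu(S)=2H(S(\epsilon),S)\to0$.

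\textbf{Sufficiency.} Assume $S(\epsilon)\neq\emptyset$ for all $\epsilon>0$ and $\mu(S(\epsilon))\to0$ as $\epsilon\to0$. I would proceed in three steps. First, each $S(\epsilon)$ is closed: if $(z_k,w_k)\in S(\epsilon)$ and $(z_k,w_k)\to(z,w)$, then continuity of $A$ and of the distance functions (Lemma \ref{Lem:2}) yield $d(z,C)\leq\epsilon$, $d(w,Q)\leq\epsilon$ and $\|w-Az\|\leq\epsilon$, while the associated selections $u_k\in\mathscr{B}_1(z_k)$, $v_k\in\mathscr{B}_2(w_k)$ admit, along a common subsequence, weak limits $u_k\rightharpoonup u\in\mathscr{B}_1(z)$ and $v_k\rightharpoonup v\in\mathscr{B}_2(w)$ by Nadler's theorem (Theorem \ref{Nadler Theorem}) together with the weak compactness of $\mathscr{B}_1$, $\mathscr{B}_2$, exactly as in the proof that $S$ is closed; passing to the limit, using $z_k\to z$ strongly and continuity of $f$, $g$, puts $(z,w)\in S(\epsilon)$. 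Second, $S=\bigcap_{\epsilon>0}S(\epsilon)$: the inclusion ``$\subset$'' is Remark \ref{rmk5}, and if $(z,w)\in S(1/n)$ for every $n$, then $z\in C$, $w\in Q$, $w=Az$, and selecting $u_n\in\mathscr{B}_1(z)$, $v_n\in\mathscr{B}_2(w)$ realizing the $1/n$-inequalities and passing to weak limits — here the arguments $z,w$ are fixed, so $\langle u_n,z-x\rangle\to\langle u,z-x\rangle$ for each fixed $x\in C$, and likewise for $v_n$ — yields $(z,w)\in S$. Third, since $\{S(1/n)\}$ is a decreasing sequence of nonempty closed subsets of the complete space $\mathscr{H}_1\times\mathscr{H}_2$ with $\mu(S(1/n))\to0$, the Kuratowski theorem on measures of noncompactness gives that $S=\bigcap_n S(1/n)$ is nonempty and compact and that $H(S(1/n),S)\to0$; combined with the monotonicity of $\epsilon\mapsto D(S(\epsilon),S)$ (from the nestedness in Remark \ref{rmk5}) this yields $H(S(\epsilon),S)\to0$ as $\epsilon\to0$. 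Then, for any generalized approximating sequence $\{(x_n,y_n)\}$ one has $(x_n,y_n)\in S(\epsilon_n)$ with $\epsilon_n\to0$, hence $d\big((x_n,y_n),S\big)\leq H(S(\epsilon_n),S)\to0$, and since $S$ is compact a routine extraction produces a subsequence of $\{(x_n,y_n)\}$ converging to a point of $S$. Thus \eqref{SMVI} is LP well-posed in the generalized sense.

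\textbf{Expected difficulty.} The continuity and linearity passages to the limit are routine; the delicate point, appearing both in the closedness of $S(\epsilon)$ and in the identity $S=\bigcap_{\epsilon>0}S(\epsilon)$, is extracting a single weak cluster point of the selections from $\mathscr{B}_1(\cdot)$ and $\mathscr{B}_2(\cdot)$ that simultaneously serves all test points $x\in C$ and $y\in Q$ — this is exactly where weak compact-valuedness and $H$-continuity, through Nadler's theorem, are used, as in the earlier results. The remaining point requiring care is the clean invocation of the Kuratowski theorem, for which one needs the completeness of $\mathscr{H}_1\times\mathscr{H}_2$, the closedness of each $S(\epsilon)$, and the monotonicity of $\{S(\epsilon)\}$ from Remark \ref{rmk5}.
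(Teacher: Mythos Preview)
Your proposal is correct and follows essentially the same approach as the paper: both directions hinge on the compactness of $S$, the identity $H(S(\epsilon),S)=D(S(\epsilon),S)$, the estimate $\mu(S(\epsilon))\le 2H(S(\epsilon),S)$, and, for sufficiency, the Kuratowski/Horvath theorem applied to the nested family $\{S(\epsilon)\}$. You are in fact more explicit than the paper in two places the paper merely asserts---the closedness of $S(\epsilon)$ and the identity $S=\bigcap_{\epsilon>0}S(\epsilon)$---and your use of the countable subfamily $S(1/n)$ together with monotonicity is a harmless variant of the paper's direct appeal to the continuous family.
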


\begin{proof}
Suppose that \eqref{SMVI} is LP well-posed in the generalized sense. Then $S$ is nonempty and hence $S(\epsilon)\neq\emptyset$ for any $\epsilon>0$.
Now, we show that $S$ is compact. Indeed, let $\{(z_n,w_n)\}\subset S$. Then it is obvious that $\{(z_n,w_n)\}$ is a generalized approximating sequence for the \eqref{SMVI}. Since the \eqref{SMVI} is LP well-posed in the generalized sense, the sequence $\{(z_n,w_n)\}$ has a subsequence which converges strongly to a point in $S$.
Hence $S$ is compact. Next, we need to show that
\begin{align*}
\mu(S(\epsilon))\rightarrow 0 \text{ as }\epsilon\rightarrow 0.
\end{align*}

Observe that for every $\epsilon>0$,
\begin{align*}
H(S(\epsilon), S)=\max\{D(S(\epsilon), S), D(S, S(\epsilon))\}=D(S(\epsilon), S).
\end{align*}

Taking into account the compactness of $S$, we get
\begin{align*}
 \mu(S(\epsilon))\leq 2 H(S(\epsilon),S).
\end{align*}

Therefore, it is sufficient to show that
\begin{align}
H(S(\epsilon), S)\rightarrow 0 \text{ as } \epsilon\rightarrow 0.\nonumber
\end{align}

Suppose to the contrary that this does not hold.
Then there exist a number $\tau>0$, a positive sequence $\{\epsilon_n\}$ of real numbers with $\epsilon_n\rightarrow 0$ as $n\rightarrow\infty$, and a sequence $\{(z_n,w_n)\}$ such that $(z_n,w_n) \in S(\epsilon_n)$ for each natural number $n$ and
\begin{align}\label{Thm2:con2:2}
(z_n,w_n)\notin S+\mathbb{B}(0,\tau) \; \; \forall n\in\mathbb{N}.
\end{align}

Since $(z_n,w_n) \in S(\epsilon_n)$, there exists a sequence $\{(u_n,v_n)\}\subset\mathscr{H}_1\times\mathscr{H}_2$ with $u_n\in\mathscr{B}_1(z_n)$, $v_n\in\mathscr{B}_2(w_n)$ such that
\begin{equation*}
\begin{cases}
d(z_n,C)\leq\epsilon_n, d(w_n,Q)\leq\epsilon_n, \|w_n-Az_n\|\leq \epsilon_n;\\
\left\langle u_n, z_n-x\right\rangle+f(z_n)-f(x)\leq\epsilon_n \; \; \forall x\in C;\\
\left\langle v_n, w_n-y\right\rangle+g(w_n)-g(y)\leq\epsilon_n \; \; \forall y\in Q.
\end{cases}
\end{equation*}

Therefore, it is evident that $\{(z_n,w_n)\}$ is a generalized approximating sequence for the \eqref{SMVI}. Since the \eqref{SMVI} is LP well-posed in the generalized sense,  $\{(z_n,w_n)\}$ has a subsequence which converges to a point in $S$. This, however, contradicts \eqref{Thm2:con2:2}.
Therefore,
\begin{align*}
H(S(\epsilon),S)\rightarrow 0 \text{ as } \epsilon\rightarrow 0,
\end{align*}
as claimed.

Conversely, suppose that
\begin{align}
S(\epsilon)\neq\emptyset \; \;  \forall\epsilon>0 \text{ and } \mu(S(\epsilon))\rightarrow 0 \text{ as }\epsilon\rightarrow 0.\nonumber
\end{align}

By our assumptions. it is clear that $S(\epsilon)$ is closed for any $\epsilon>0$.
Also, observe that
\begin{align*}
S=\bigcap_{\epsilon>0}S(\epsilon).
\end{align*}

Since $\mu(S(\epsilon))\rightarrow 0 \text{ as } \epsilon\rightarrow 0$, using Theorem 1 in \cite{CHOR1985} (or \cite[p. 412]{KKUR1968}),
one can show that $S$ is nonempty and compact, and
\begin{align}\label{ineq:Ex}
H(S(\epsilon),S)\rightarrow 0\text{ as }\epsilon\rightarrow 0.
\end{align}

Let $\{(z_n,w_n)\}\subset\mathscr{H}_1\times\mathscr{H}_2$ be any generalized approximating sequence for the \eqref{SMVI}. Then there exist a sequence $\{(u_n,v_n)\}\subset\mathscr{H}_1\times\mathscr{H}_2$ with $u_n\in\mathscr{B}_1(z_n)$, $v_n\in\mathscr{B}_2(w_n)$ and a positive sequence $\{\epsilon_n\}$ of real numbers
such that $\epsilon_n\rightarrow 0$ and
\begin{equation*}
\begin{cases}
d(z_n,C)\leq\epsilon_n, d(w_n,Q)\leq\epsilon_n, \|w_n-Az_n\|\leq \epsilon_n;\\
\left\langle u_n, z_n-x\right\rangle+f(z_n)-f(x)\leq\epsilon_n \; \; \forall x\in C;\\
\left\langle v_n, w_n-y\right\rangle+g(w_n)-g(y)\leq\epsilon_n \; \; \forall y\in Q.
\end{cases}
\end{equation*}
Thus,  $(z_n,w_n)\in S(\epsilon_n)$.

Using condition (\ref{ineq:Ex}), we find that
\begin{align*}
d((z_n,w_n), S)&\leq D(S(\epsilon_n),S)\\&
=\max\{D(S(\epsilon_n),S), D(S,S(\epsilon_n))\}\\&
=H(S(\epsilon_n),S)\rightarrow 0 \text{ as } n\rightarrow\infty.
\end{align*}

Since $S$ is compact, the sequence $\{(z_n, w_n)\}$ has a subsequence which converges to some element of $S$.
Hence the \eqref{SMVI} is LP well-posed in the generalized sense, as asserted.
This completes the proof.
\end{proof}

\begin{theorem}\label{Thm17:2}
Let $\mathscr{H}_1$ and $\mathscr{H}_2$ be two finite-dimensional real Hilbert spaces, and let $C$ and $Q$ be two nonempty, closed and convex subsets of $\mathscr{H}_1$ and $\mathscr{H}_2$, respectively. Let $f:\mathscr{H}_1\rightarrow\mathbb{R}$ and $g:\mathscr{H}_2\rightarrow\mathbb{R}$ be two continuous functions.
Let $\mathscr{B}_1:\mathscr{H}_1\rightarrow 2^{\mathscr{H}_1}$ and $\mathscr{B}_2:\mathscr{H}_2\rightarrow 2^{\mathscr{H}_2}$ be two strict compact-valued and $H$-continuous mappings. Let $A:\mathscr{H}_1\rightarrow\mathscr{H}_2$ be a bounded linear operator. Suppose there exists $\epsilon>0$ such that $S(\epsilon)$ is nonempty and bounded.
Then the \eqref{SMVI} is LP well-posed if and only if it has a unique solution.
\end{theorem}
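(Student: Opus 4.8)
The plan is to handle the two implications of the equivalence separately. The forward implication is immediate: if \eqref{SMVI} is LP well-posed, then by definition its solution set $S$ is a singleton, so in particular it has a unique solution.

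For the converse, suppose \eqref{SMVI} has a unique solution, i.e. $S=\{(\bar z,\bar w)\}$. By Theorem~\ref{Thm13:2} it suffices to show that $S(\epsilon)\neq\emptyset$ for every $\epsilon>0$ and that $\text{diam}(S(\epsilon))\to 0$ as $\epsilon\to 0$. The first property is immediate from Remark~\ref{rmk5}, since $\emptyset\neq S\subset S(\epsilon)$ for every $\epsilon>0$. For the second, I would argue by contradiction. If $\text{diam}(S(\epsilon))\nrightarrow 0$ as $\epsilon\to 0$, there exist $\bar\delta>0$, a sequence $\epsilon_n\downarrow 0$, and points $(z_n,w_n),(z_n',w_n')\in S(\epsilon_n)$ with $\|(z_n,w_n)-(z_n',w_n')\|>\bar\delta$ for all $n$. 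Let $\epsilon_0>0$ be such that $S(\epsilon_0)$ is nonempty and bounded, as provided by the hypothesis. Since $\epsilon_n\to 0$, Remark~\ref{rmk5} gives $S(\epsilon_n)\subset S(\epsilon_0)$ for all sufficiently large $n$, so the tails of $\{(z_n,w_n)\}$ and $\{(z_n',w_n')\}$ lie in the bounded subset $S(\epsilon_0)$ of the finite-dimensional space $\mathscr{H}_1\times\mathscr{H}_2$. Hence, after passing to subsequences, I may assume $(z_n,w_n)\to(\hat z,\hat w)$ and $(z_n',w_n')\to(\hat z',\hat w')$.

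The crucial step is to prove that $(\hat z,\hat w)\in S$ (and, symmetrically, $(\hat z',\hat w')\in S$). By definition of $S(\epsilon_n)$, we have $d(z_n,C)\le\epsilon_n$, $d(w_n,Q)\le\epsilon_n$, $\|w_n-Az_n\|\le\epsilon_n$, and there exist $u_n\in\mathscr{B}_1(z_n)$, $v_n\in\mathscr{B}_2(w_n)$ with $\langle u_n,z_n-x\rangle+f(z_n)-f(x)\le\epsilon_n$ for all $x\in C$ and $\langle v_n,w_n-y\rangle+g(w_n)-g(y)\le\epsilon_n$ for all $y\in Q$. Letting $n\to\infty$ and using that $C$ and $Q$ are closed and $A$ is continuous, we obtain $\hat z\in C$, $\hat w\in Q$, $\hat w=A\hat z$. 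For the variational inequalities I would apply Nadler's result (Theorem~\ref{Nadler Theorem}): since $\mathscr{B}_1(z_n),\mathscr{B}_1(\hat z)$ are nonempty and compact, for each $u_n$ there is $\tilde u_n\in\mathscr{B}_1(\hat z)$ with $\|u_n-\tilde u_n\|\le(1+1/n)H(\mathscr{B}_1(z_n),\mathscr{B}_1(\hat z))$, which tends to $0$ by the $H$-continuity of $\mathscr{B}_1$; by the compactness of $\mathscr{B}_1(\hat z)$, along a further subsequence $\tilde u_n\to\tilde u\in\mathscr{B}_1(\hat z)$, hence $u_n\to\tilde u$. Passing to the limit in the first approximate inequality, using the continuity of $f$, yields $\langle\tilde u,\hat z-x\rangle+f(\hat z)-f(x)\le 0$ for all $x\in C$, and the same argument with $\mathscr{B}_2$, $g$, $w_n$ produces $\tilde v\in\mathscr{B}_2(\hat w)$ with $\langle\tilde v,\hat w-y\rangle+g(\hat w)-g(y)\le 0$ for all $y\in Q$. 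Therefore $(\hat z,\hat w)\in S$, and likewise $(\hat z',\hat w')\in S$. Since $S=\{(\bar z,\bar w)\}$, both limits equal $(\bar z,\bar w)$, so $\|(z_n,w_n)-(z_n',w_n')\|\to 0$ along the subsequence, contradicting $\|(z_n,w_n)-(z_n',w_n')\|>\bar\delta$. This proves $\text{diam}(S(\epsilon))\to 0$ as $\epsilon\to 0$, and Theorem~\ref{Thm13:2} then gives that \eqref{SMVI} is LP well-posed.

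I expect the main obstacle to be precisely the extraction of the convergent subsequences: this is where the assumption that some $S(\epsilon)$ be nonempty and bounded, combined with the finite-dimensionality of $\mathscr{H}_1$ and $\mathscr{H}_2$, is indispensable — without it an approximating sequence could escape to infinity and neither the diameter estimate nor the passage to a limit would be available. The treatment of the multivalued terms through Nadler's theorem and $H$-continuity is routine and mirrors the limiting argument already used to show that $S$ is closed.
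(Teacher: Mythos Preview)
Your proof is correct. The route, however, differs from the paper's. The paper proves the sufficiency direction directly from the definition of LP well-posedness: it takes an arbitrary generalized approximating sequence $\{(z_n,w_n)\}$, shows it is eventually trapped in the bounded set $S(\epsilon)$, then uses the standard fact that a bounded sequence in a finite-dimensional space converges to $\ell$ if and only if every subsequence has a further subsequence converging to $\ell$. The Nadler/$H$-continuity limiting argument is applied once, to identify the limit of any such sub-subsequence with the unique solution.

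You instead reduce to the metric characterization of Theorem~\ref{Thm13:2} and verify $\text{diam}(S(\epsilon))\to 0$ by contradiction, running the same Nadler/$H$-continuity argument on the two offending sequences. This is perfectly valid, and arguably cleaner in that the passage from ``every subsequence has a convergent further subsequence with common limit'' to ``the full sequence converges'' is absorbed into Theorem~\ref{Thm13:2}. The cost is a mild redundancy: the Nadler limiting step is already embedded in the converse part of Theorem~\ref{Thm13:2}, so your argument effectively invokes it twice. The paper's direct verification avoids this, at the price of the (implicit) sub-subsequence convergence principle. Both approaches use the finite-dimensionality and the boundedness hypothesis on $S(\epsilon)$ in exactly the same place and for exactly the same purpose.
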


\begin{proof}
Since the necessity part of the theorem is obvious, we need to prove only the sufficiency part. To this end, suppose \eqref{SMVI} has a unique solution, say $(x^*,y^*)$.
Let $\{(z_n, w_n)\}$ be a generalized approximating sequence for the \eqref{SMVI}. Then there exist a sequence $\{(u_n,v_n)\}\subset\mathscr{H}_1\times\mathscr{H}_2$, with $u_n\in\mathscr{B}_1(z_n)$, $v_n\in\mathscr{B}_2(w_n)$, and a positive sequence $\{\epsilon_n\}$ of real numbers with $\epsilon_n\rightarrow 0$ such that
\begin{equation*}
\begin{cases}
d(z_n,C)\leq\epsilon_n, d(w_n,Q)\leq\epsilon_n, \|w_n-Az_n\|\leq \epsilon_n;\\
\left\langle u_n, z_n-x\right\rangle+f(z_n)-f(x)\leq\epsilon_n \; \; \forall x\in C;\\
\left\langle v_n, w_n-y\right\rangle+g(w_n)-g(y)\leq\epsilon_n \; \; \forall y\in Q.
\end{cases}
\end{equation*}
Thus $(z_n,w_n)\in S(\epsilon_n)$. Let $\epsilon>0$ be such that $S(\epsilon)$ is nonempty and bounded. Then there exists a natural number $n_0$ such that
\begin{align}
(z_n,w_n)\in S(\epsilon_n)\subset S(\epsilon) \; \; \forall n\geq n_0.\nonumber
\end{align}
This implies that the sequence $\{(z_n,w_n)\}$ is bounded. Let $\{(z_{n_k},w_{n_k})\}$ be any subsequence of $\{(z_n,w_n)\}$. Since the sequence $\{(z_n,w_n)\}$ is bounded, $\{(z_{n_k},w_{n_k})\}$ has a convergent subsequence. Without loss of generality, we may assume that $(z_{n_k},w_{n_k})\rightarrow (\tilde{z},\tilde{w})$ as $k\rightarrow\infty$. Also, it is clear that $(z_{n_k},w_{n_k})\in S(\epsilon_{n_k})$. Therefore, there exist a sequence $\{(u_{n_k},v_{n_k})\}\subset\mathscr{H}_1\times\mathscr{H}_2$ with $u_{n_k}\in\mathscr{B}_1(z_{n_k})$, $v_{n_k}\in\mathscr{B}_2(w_{n_k})$ and a positive sequence $\{\epsilon_{n_k}\}$ of real numbers with $\epsilon_{n_k}\rightarrow 0$ such that
\begin{equation}\label{AS6:2}
\begin{cases}
d(z_{n_k},C)\leq\epsilon_{n_k}, d(w_{n_k},Q)\leq\epsilon_{n_k}, \|w_{n_k}-Az_{n_k}\|\leq \epsilon_{n_k};\\
\left\langle u_{n_k}, z_{n_k}-x\right\rangle+f(z_{n_k})-f(x)\leq\epsilon_{n_k} \; \; \forall x\in C;\\
\left\langle v_{n_k}, w_{n_k}-y\right\rangle+g(w_{n_k})-g(y)\leq\epsilon_{n_k} \; \forall y\in Q.
\end{cases}
\end{equation}

Since $\mathscr{B}_1(z_{n_k})$ and $\mathscr{B}_1(\tilde{z})$ are nonempty compact, $\mathscr{B}_1(z_{n_k}), \mathscr{B}_1(\tilde{z})\in CB(\mathscr{H}_1)$ for all $k\in\mathbb{N}$. Therefore, by Theorem \ref{Nadler Theorem}, for each $u_{n_k}\in\mathscr{B}_1(z_{n_k})$, there exists $\tilde{v}_{n_k}\in\mathscr{B}_1(\tilde{z})$ such that
\begin{align*}
\|u_{n_k}-\tilde{v}_{n_k}\|\leq (1+1/k) H(\mathscr{B}_1(z_{n_k}),\mathscr{B}_1(\tilde{z}))\rightarrow 0 \text{ as } k\rightarrow\infty.
\end{align*}

This implies that $u_{n_k}-\tilde{v}_{n_k}\rightarrow 0$ as $k\rightarrow\infty$. Since  $\mathscr{B}_1(\tilde{z})$ is compact, without loss of generality, we may assume that $\tilde{v}_{n_k}\rightarrow\tilde{u}\in\mathscr{B}_1(\tilde{z})$. This implies that $u_{n_k}\rightarrow \tilde{u}\in\mathscr{B}_1(\tilde{z})$ as $ k\rightarrow\infty$. Similarly, using the assumptions on $\mathscr{B}_2$ we can find $\tilde{v}$ such that $v_{n_k}\rightarrow\tilde{v}\in\mathscr{B}_2(\tilde{w})$.

Taking $k\rightarrow\infty$ in \eqref{AS6:2} and using our assumptions, we obtain
\begin{equation*}
\begin{cases}
\tilde{z}\in C, \tilde{w}\in Q, \tilde{w}=A\tilde{z};\\
\left\langle\tilde{u}, \tilde{z}-x\right\rangle+f(\tilde{z})-f(x)\leq 0 \; \; \forall x\in C;\\
\left\langle\tilde{v}, \tilde{w}-y\right\rangle+g(\tilde{w})-g(y)\leq 0 \; \; \forall y\in Q.
\end{cases}
\end{equation*}

This implies that $(\tilde{z},\tilde{w})\in S$. Since the \eqref{SMVI} has a unique solution, $(x^*,y^*)=(\tilde{z},\tilde{w})$. Therefore, every generalized approximate sequence for the \eqref{SMVI} converges to $(\tilde{z},\tilde{w})$.
Consequently, the \eqref{SMVI} is LP well-posed. This completes the proof.
\end{proof}

\begin{example}\label{Example:1}
Let $\mathscr{H}_1=\mathbb{R}=\mathscr{H}_2$ and let $C=[0,1]=Q$. Let $A:\mathbb{R}\rightarrow\mathbb{R}$ be the bounded linear operator defined by $A(x)=x$.
Define two functions, $f:\mathbb{R}\rightarrow\mathbb{R}$ and $g:\mathbb{R}\rightarrow\mathbb{R}$, by $f(x)=x^4$ and $g(y)=y^2$. It is clear that both $f$ and $g$ are continuous functions on $\mathbb{R}$. Consider the two strict multivalued mappings $\mathscr{B}_1:\mathbb{R}\rightarrow 2^\mathbb{R}$ and $\mathscr{B}_2:\mathbb{R}\rightarrow 2^\mathbb{R}$ defined by $\mathscr{B}_1(x)=\{x,0\}$ and $\mathscr{B}_2(y)=\{y,0\}$. It is clear that both $\mathscr{B}_1$ and $\mathscr{B}_2$ are compact-valued.

Now, we have
\begin{align*}
D(\mathscr{B}_1(x),\mathscr{B}_1(y))=\max\{d(x,\mathscr{B}_1(y)),d(0,\mathscr{B}_1(y))\}=d(x,\mathscr{B}_1(y))=\min\{\vert x\vert,\vert x-y\vert\}\leq \vert x-y\vert.
\end{align*}

Since a similar computation holds for $D(\mathscr{B}_1(y),\mathscr{B}_1(x))$,
it follows that
\begin{align*}
H(\mathscr{B}_1(x),\mathscr{B}_1(y))=\max\{D(\mathscr{B}_1(x),\mathscr{B}_1(y)),D(\mathscr{B}_1(y),\mathscr{B}_1(x))\}\leq \vert x-y\vert.
\end{align*}
This shows that $\mathscr{B}_1$ is $H$-continuous on $\mathbb{R}$.

Similarly, we can show that $\mathscr{B}_2$ is also $H$-continuous on $\mathbb{R}$.

In addition, it is not difficult to check that $S=\{(0,0)\}\subset\mathbb{R}^2$ is a singleton.

Now, for each $\epsilon> 0$, we have
\begin{multline}
S(\epsilon)= \Big\{(z, w)\in\mathbb{R}\times\mathbb{R}:d(z,[0,1])\leq\epsilon, d(w,[0,1])\leq\epsilon, \vert w-z\vert\leq\epsilon; \nonumber\\
\text{ there exist } u\in\mathscr{B}_1(z), v\in\mathscr{B}_2(w) \text{ such that }\\
\left\langle u, z-x\right\rangle+z^4-x^4\leq\epsilon,\forall x\in [0,1];\\
\left\langle v, w-y\right\rangle+w^2-y^2\leq\epsilon,\forall y\in [0,1]\Big\}.
\end{multline}
One can check that $S(\epsilon)\subset [-\epsilon,1+\epsilon]\times[-\epsilon,1+\epsilon]$ for each $\epsilon>0$. Therefore, $S(\epsilon)$ is a bounded subset of $\mathbb{R}^2$. Also, $S\subset S(\epsilon)$ for any $\epsilon>0$ and consequently, $S(\epsilon)\neq\emptyset$.
Therefore, the corresponding \eqref{SMVI} associated to these data is LP well-posed by Theorem \ref{Thm17:2}.
\end{example}

\begin{theorem}\label{Thm18:2}
Let $\mathscr{H}_1$ and $\mathscr{H}_2$ be two finite-dimensional Hilbert spaces, and let $C$ and $Q$ be two nonempty, closed and convex subsets of $\mathscr{H}_1$ and $\mathscr{H}_2$, respectively. Let $f:\mathscr{H}_1\rightarrow\mathbb{R}$ and $g:\mathscr{H}_2\rightarrow\mathbb{R}$ be two continuous functions. Let $\mathscr{B}_1:\mathscr{H}_1\rightarrow 2^{\mathscr{H}_1}$ and $\mathscr{B}_2:\mathscr{H}_2\rightarrow 2^{\mathscr{H}_2}$ be two strict compact-valued and  $H$-continuous mappings, and let $A:\mathscr{H}_1\rightarrow\mathscr{H}_2$ be a bounded linear operator. Suppose there exists $\epsilon>0$ such that $S(\epsilon)$ is nonempty and bounded. Then the \eqref{SMVI} is LP well-posed in the generalized sense if and only if the \eqref{SMVI} has nonempty solution set $S$.
\end{theorem}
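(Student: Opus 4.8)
The plan is to follow the template of the proof of Theorem~\ref{Thm17:2} almost verbatim, the only difference being that we no longer have (nor need) uniqueness of the solution. The necessity is immediate from the definition: if \eqref{SMVI} is LP well-posed in the generalized sense, then $S\neq\emptyset$ by definition. So I would devote all the work to the sufficiency direction.

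Assume $S\neq\emptyset$ and fix $\epsilon>0$ for which $S(\epsilon)$ is nonempty and bounded. I would take an arbitrary generalized approximating sequence $\{(z_n,w_n)\}$ for \eqref{SMVI}, with selections $u_n\in\mathscr{B}_1(z_n)$, $v_n\in\mathscr{B}_2(w_n)$ and tolerances $\epsilon_n\downarrow 0$. As in the earlier proofs one has $(z_n,w_n)\in S(\epsilon_n)$, and choosing $n_0$ with $\epsilon_n\leq\epsilon$ for $n\geq n_0$, the monotonicity recorded in Remark~\ref{rmk5} gives $(z_n,w_n)\in S(\epsilon_n)\subset S(\epsilon)$ for all $n\geq n_0$. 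Hence $\{(z_n,w_n)\}$ is bounded, and since $\mathscr{H}_1\times\mathscr{H}_2$ is finite-dimensional, it has a strongly convergent subsequence $(z_{n_k},w_{n_k})\to(\tilde z,\tilde w)$.

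Next I would verify that $(\tilde z,\tilde w)\in S$. Closedness of $C$ and $Q$ together with $d(z_{n_k},C),d(w_{n_k},Q)\leq\epsilon_{n_k}$ gives $\tilde z\in C$, $\tilde w\in Q$; continuity of $A$ and $\|w_{n_k}-Az_{n_k}\|\leq\epsilon_{n_k}$ gives $\tilde w=A\tilde z$. Since $\mathscr{B}_1(z_{n_k}),\mathscr{B}_1(\tilde z)\in CB(\mathscr{H}_1)$, Nadler's theorem (Theorem~\ref{Nadler Theorem}) together with the $H$-continuity of $\mathscr{B}_1$ produces $\tilde v_{n_k}\in\mathscr{B}_1(\tilde z)$ with $\|u_{n_k}-\tilde v_{n_k}\|\to 0$; compactness of $\mathscr{B}_1(\tilde z)$ then yields, after a further subsequence, $\tilde v_{n_k}\to\tilde u\in\mathscr{B}_1(\tilde z)$, hence $u_{n_k}\to\tilde u$, and similarly $v_{n_k}\to\tilde v\in\mathscr{B}_2(\tilde w)$. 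Passing to the limit in
\begin{equation*}
\langle u_{n_k},z_{n_k}-x\rangle+f(z_{n_k})-f(x)\leq\epsilon_{n_k}
\qquad\text{and}\qquad
\langle v_{n_k},w_{n_k}-y\rangle+g(w_{n_k})-g(y)\leq\epsilon_{n_k},
\end{equation*}
using continuity of $f$, $g$ and strong--strong convergence in the inner products, yields $(\tilde z,\tilde w)\in S$. Thus every generalized approximating sequence has a subsequence converging to a point of $S$, and since $S\neq\emptyset$, \eqref{SMVI} is LP well-posed in the generalized sense.

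The step that I expect to require the most care is the extraction of strongly convergent selections: one needs both the compact-valuedness of $\mathscr{B}_1,\mathscr{B}_2$ (to pass from ``$u_{n_k}-\tilde v_{n_k}\to 0$ with $\tilde v_{n_k}\in\mathscr{B}_1(\tilde z)$'' to an actual convergent subsequence of the $\tilde v_{n_k}$, hence of the $u_{n_k}$) and the finite-dimensionality, which is precisely what allows us to work with strong convergence throughout — both for $\{(z_n,w_n)\}$ and for the selections — so that the bilinear terms $\langle u_{n_k},z_{n_k}-x\rangle$ pass to the limit without difficulty. This is also why the hypotheses here are stronger (finite dimensions, compact values) than in Theorem~\ref{Thm16:2}, where one leans instead on the metric characterization $\mu(S(\epsilon))\to 0$ rather than on a direct compactness argument. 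Everything else is routine bookkeeping copied from the proofs of Theorems~\ref{Thm17:2} and \ref{Thm16:2}.
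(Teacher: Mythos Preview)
Your proposal is correct and follows essentially the same approach as the paper's own proof: both use the boundedness of $S(\epsilon)$ together with finite-dimensionality to extract a strongly convergent subsequence of a generalized approximating sequence, then apply Nadler's theorem with the $H$-continuity and compact-valuedness of $\mathscr{B}_1,\mathscr{B}_2$ to pass the selections to the limit and conclude that the limit point lies in $S$. Your write-up is in fact slightly more careful than the paper's in spelling out why $\tilde z\in C$, $\tilde w\in Q$, and $\tilde w=A\tilde z$.
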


\begin{proof}
The necessary part is obvious. For the sufficiency part, let the \eqref{SMVI} have a nonempty solution set $S$. Let $\{(z_n, w_n)\}$ be a generalized approximating sequence for the \eqref{SMVI}. Then there exist a sequence $\{(u_n,v_n)\}\subset\mathscr{H}_1\times\mathscr{H}_2$ with $u_n\in\mathscr{B}_1(z_n)$, $v_n\in\mathscr{B}_2(w_n)$, and a positive sequence $\{\epsilon_n\}$ of real numbers with $\epsilon_n\rightarrow 0$ such that
\begin{equation*}
\begin{cases}
d(z_n,C)\leq\epsilon_n, d(w_n,Q)\leq\epsilon_n, \|w_n-Az_n\|\leq \epsilon_n;\\
\left\langle u_n, z_n-x\right\rangle+f(z_n)-f(x)\leq\epsilon_n \; \; \forall x\in C;\\
\left\langle v_n, w_n-y\right\rangle+g(w_n)-g(y)\leq\epsilon_n \; \; \forall y\in Q.
\end{cases}
\end{equation*}
Thus $(z_n,w_n)\in S(\epsilon_n)$. Let $\epsilon>0$ be such that $S(\epsilon)$ is nonempty and bounded. Then there exists a natural number $n_0$ such that
\begin{align}
(z_n,w_n)\in S(\epsilon_n)\subset S(\epsilon)\; \; \forall n\geq n_0.\nonumber
\end{align}
This implies that $\{(z_n,w_n)\}$ is bounded. Let $\{(z_{n_k},w_{n_k})\}$ be any convergent subsequence of $\{(z_n,w_n)\}$. Since $\{(z_n,w_n)\}$ is bounded, $\{(z_{n_k},w_{n_k})\}$ has a convergent subsequence. Without loss of generality, we may assume that $(z_{n_k},w_{n_k})\rightarrow (\tilde{z},\tilde{w})$ as $k\rightarrow\infty$. Also, it is clear that $(z_{n_k},w_{n_k})\in S(\epsilon_{n_k})$. Therefore, there exist a sequence $\{(u_{n_k},v_{n_k})\}\subset\mathscr{H}_1\times\mathscr{H}_2$ with $u_{n_k}\in\mathscr{B}_1(z_{n_k})$, $v_{n_k}\in\mathscr{B}_2(w_{n_k})$ and a positive sequence $\{\epsilon_{n_k}\}$ of real numbers with $\epsilon_{n_k}\rightarrow 0$ such that
\begin{equation}\label{AS8:2}
\begin{cases}
d(z_{n_k},C)\leq\epsilon_{n_k}, d(w_{n_k},Q)\leq\epsilon_{n_k}, \|w_{n_k}-Az_{n_k}\|\leq \epsilon_{n_k};\\
\left\langle u_{n_k}, z_{n_k}-x\right\rangle+f(z_{n_k})-f(x)\leq\epsilon_{n_k} \; \; \forall x\in C;\\
\left\langle v_{n_k}, w_{n_k}-y\right\rangle+g(w_{n_k})-g(y)\leq\epsilon_{n_k} \; \; \forall y\in Q.
\end{cases}
\end{equation}

Since $\mathscr{B}_1(z_{n_k})$ and $\mathscr{B}_1(\tilde{z})$ are nonempty compact, $\mathscr{B}_1(z_{n_k}), \mathscr{B}_1(\tilde{z})\in CB(\mathscr{H}_1)$ for all $k\in\mathbb{N}$. Therefore, by Theorem \ref{Nadler Theorem}, for each $u_{n_k}\in\mathscr{B}_1(z_{n_k})$, there exists $\tilde{v}_{n_k}\in\mathscr{B}_1(\tilde{z})$ such that
\begin{align*}
\|u_{n_k}-\tilde{v}_{n_k}\|\leq (1+1/k) H(\mathscr{B}_1(z_{n_k}),\mathscr{B}_1(\tilde{z}))\rightarrow 0 \text{ as } k\rightarrow\infty.
\end{align*}

This implies that $u_{n_k}-\tilde{v}_{n_k}\rightarrow 0$ as $k\rightarrow\infty$. Since $\mathscr{B}_1(\tilde{z})$ is compact, without loss of generality, we may assume that $\tilde{v}_{n_k}\rightarrow\tilde{u}\in\mathscr{B}_1(\tilde{z})$. This implies that $u_{n_k}\rightarrow\tilde{u}\in\mathscr{B}_1(\tilde{z})$ as $ k\rightarrow\infty$. Similarly, using the assumptions on $\mathscr{B}_2$, we can find $\tilde{v}$ such that $v_{n_k}\rightarrow\tilde{v}\in\mathscr{B}_2(\tilde{w})$.

Taking the limit as $k\rightarrow\infty$ in \eqref{AS8:2} and using our assumptions, we obtain
\begin{equation*}
\begin{cases}
\tilde{z}\in C, \tilde{w}\in Q, \tilde{w}=A\tilde{z};\\
\left\langle\tilde{u}, \tilde{z}-x\right\rangle+f(\tilde{z})-f(x)\leq 0 \; \; \forall x\in C;\\
\left\langle\tilde{v}, \tilde{w}-y\right\rangle+g(\tilde{w})-g(y)\leq 0 \; \; \forall y\in Q.
\end{cases}
\end{equation*}

This implies that $(\tilde{z},\tilde{w})\in S$. Therefore, every generalized approximate sequence for \eqref{SMVI} converges to $(\tilde{z},\tilde{w})$.
Consequently, the \eqref{SMVI} is LP well-posed in the generalized sense, as claimed. This completes the proof.
\end{proof}

\begin{example}\label{Example:2}
Let $\mathscr{H}_1=\mathbb{R}=\mathscr{H}_2$ and let $C=[-1,1]=Q$. Let $A:\mathbb{R}\rightarrow\mathbb{R}$ be the bounded linear operator defined by $A(x)=x$.
Define two functions, $f:\mathbb{R}\rightarrow\mathbb{R}$ and $g:\mathbb{R}\rightarrow\mathbb{R}$, by $f(x) = (x^2 - 1)^2$ and $g(y)=(y^4-1)^2$.
It is clear that both $f$ and $g$ are continuous functions on $\mathbb{R}$. Consider the two strict multivalued mappings $\mathscr{B}_1:\mathbb{R}\rightarrow 2^\mathbb{R}$
and $\mathscr{B}_2:\mathbb{R}\rightarrow 2^\mathbb{R}$ defined by $\mathscr{B}_(x)=\{x,0\}$ and $\mathscr{B}_2(y)=\{y,0\}$.
It is clear that both $\mathscr{B}_1$ and $\mathscr{B}_2$ are compact-valued.

Now, we have
\begin{align*}
D(\mathscr{B}_1(x),\mathscr{B}_1(y))=\max\{d(x,\mathscr{B}_1(y)),d(0,\mathscr{B}_1(y))\}=d(x,\mathscr{B}_1(y))=\min\{\vert x\vert,\vert x-y\vert\}\leq \vert x-y\vert.
\end{align*}

Since a similar computation holds for $D(\mathscr{B}_1(y),\mathscr{B}_1(x))$, it follows that
\begin{align*}
H(\mathscr{B}_1(x),\mathscr{B}_1(y))=\max\{D(\mathscr{B}_1(x),\mathscr{B}_1(y)),D(\mathscr{B}_1(y),\mathscr{B}_1(x))\}\leq \vert x-y\vert.
\end{align*}
This shows that $\mathscr{B}_1$ is $H$-continuous on $\mathbb{R}$.

Similarly, we can show that $\mathscr{B}_2$ is also $H$-continuous on $\mathbb{R}$.

In addition, it is not difficult to check that $(-1,-1), (1,1)\in S$.  Therefore, the set $S$ is nonempty.

Now, for each $\epsilon> 0$, we have
\begin{multline}
S(\epsilon)= \Big\{(z, w)\in\mathbb{R}\times\mathbb{R}:d(z,[-1,1])\leq\epsilon, d(w,[-1,1])\leq\epsilon, \vert w-z\vert\leq\epsilon; \nonumber\\
\text{ there exist } u\in\mathscr{B}_1(z), v\in\mathscr{B}_2(w) \text{ such that }\\
\left\langle u, z-x\right\rangle+(z^2-1)^2-(x^2-1)^2\leq\epsilon,\forall x\in [-1,1];\\
\left\langle v, w-y\right\rangle+(w^4-1)^2-(y^4-1)^2\leq\epsilon,\forall y\in [-1,1]\Big\}.
\end{multline}
One can check that $S(\epsilon)\subset [-1-\epsilon,1+\epsilon]\times[-1-\epsilon,1+\epsilon]$ for each $\epsilon>0$. Therefore, $S(\epsilon)$ is a bounded subset of $\mathbb{R}^2$. Also, $S\subset S(\epsilon)$ for any $\epsilon>0$ and consequently, $S(\epsilon)\neq\emptyset$. Therefore, the corresponding \eqref{SMVI} associated to these data is LP well-posed by Theorem \ref{Thm18:2}.
\end{example}

\section{Parametric Split Multivalued Variational Inequality Problem (PSMVIP)}
\label{Sec:3}

Let $\mathscr{H}_1$ and $\mathscr{H}_2$ be two real Hilbert spaces, and let $C$ and $Q$ be two nonempty, closed and convex subsets of $\mathscr{H}_1$ and $\mathscr{H}_2$, respectively. Let $X$ be a Banach space, and let $\varphi:\mathscr{H}_1\times X\rightarrow\mathbb{R}$ and $\psi:\mathscr{H}_2\times X\rightarrow\mathbb{R}$ be two functions.
Let $\mathscr{B}_1:\mathscr{H}_1\times X\rightarrow 2^{\mathscr{H}_1}$ and $\mathscr{B}_2:\mathscr{H}_2\times X\rightarrow 2^{\mathscr{H}_2}$ be two strict multivalued mappings,
and $A:\mathscr{H}_1\rightarrow\mathscr{H}_2$ be a bounded linear operator. With these data, the {\em parametric split multivalued variational inequality problem} (PSMVIP) is formulated as follows: for each $p\in X$, find $(x^*,y^*)\in \mathscr{H}_1\times\mathscr{H}_2$ so that there exists $(u^*,v^*)\in\mathscr{B}_1(x^*,p)\times\mathscr{B}_2(y^*,p)$
such that
\begin{equation}\label{PSMVI}
\begin{cases}
x^*\in C, y^*\in Q, y^*=Ax^*;\\
\left\langle u^*, x^*-x\right\rangle+\varphi(x^*,p)-\varphi(x,p)\leq 0 \; \; \forall x\in C;\\
\left\langle v^*, y^*-y\right\rangle+\psi(y^*,p)-\psi(y,p)\leq 0 \; \; \forall y\in Q.\tag{PSMVIP$_p$}
\end{cases}
\end{equation}
The solution set of the \eqref{PSMVI} is denoted by $S_p$ for each fixed $p\in X$. Also, we denote by $\mathcal{PSMVIP}$ the family $\{$\ref{PSMVI}$: p\in X\}$ of problems.

\begin{definition}
Let $p\in X$ and $\{p_n\}\subset X$ with $p_n\rightarrow p$. A sequence $\{(x_n,y_n)\}\subset \mathscr{H}_1\times\mathscr{H}_2$ is said to be an approximating sequence for the  \eqref{PSMVI} corresponding to $\{p_n\}$ if there exist a sequence $\{(u_n,v_n)\}\subset\mathscr{H}_1\times\mathscr{H}_2$ with $u_n\in\mathscr{B}_1(x_n,p_n)$, $v_n\in\mathscr{B}_2(y_n,p_n)$ and a positive sequence $\{\epsilon_n\}$ of real numbers with $\epsilon_n\rightarrow 0$ such that
\begin{equation}\label{PSMVI2}
\begin{cases}
x_n\in C, y_n\in Q, \|y_n-Ax_n\|\leq \epsilon_n;\\
\left\langle u_n, x_n-x\right\rangle+\varphi(x_n,p_n)-\varphi(x,p_n)\leq\epsilon_n \; \; \forall x\in C;\\
\left\langle v_n, y_n-y\right\rangle+\psi(y_n,p_n)-\psi(y,p_n)\leq\epsilon_n \; \; \forall y\in Q.\tag{AS}
\end{cases}
\end{equation}
\end{definition}

\begin{definition}

The $\mathcal{PSMVIP}$ is said to be well-posed if for each $p\in X$, $S_p$ is a singleton and for each sequence $\{p_n\}\subset X$ with $p_n\rightarrow p$, every approximating sequence for \eqref{PSMVI} corresponding to $\{p_n\}$ converges to the unique solution. The $\mathcal{PSMVIP}$ is said to be well-posed in the generalized sense
if for each $p\in X$, $S_p$ is nonempty and for each sequence $\{p_n\}\subset X$ with $p_n\rightarrow p$, every approximating sequence for the \eqref{PSMVI} corresponding to $\{p_n\}$ has a subsequence which converges to some element of $S_p$.
\end{definition}

\begin{definition}
Let $p\in X$ and $\{p_n\}\subset X$ with $p_n\rightarrow p$. A sequence $\{(x_n,y_n)\}\subset\mathscr{H}_1\times\mathscr{H}_2$ is said to be a generalized approximating sequence for the \eqref{PSMVI} corresponding to $\{p_n\}$ if there exist a sequence $\{(u_n,v_n)\}\subset\mathscr{H}_1\times\mathscr{H}_2$ with $u_n\in\mathscr{B}_1(x_n,p_n)$, $v_n\in\mathscr{B}_2(y_n,p_n)$ and a positive sequence $\{\epsilon_n\}$ of real numbers with $\epsilon_n\rightarrow 0$ such that
\begin{equation}\label{GPSMVI}
\begin{cases}
d(x_n,C)\leq\epsilon_n, d(y_n,Q)\leq\epsilon_n, \|y_n-Ax_n\|\leq \epsilon_n;\\
\left\langle u_n, x_n-x\right\rangle+\varphi(x_n,p_n)-\varphi(x,p_n)\leq\epsilon_n \; \; \forall x\in C;\\
\left\langle v_n, y_n-y\right\rangle+\psi(y_n,p_n)-\psi(y,p_n)\leq\epsilon_n \; \; \forall y\in Q.\tag{GAS}
\end{cases}
\end{equation}
\end{definition}

\begin{definition}
The $\mathcal{PSMVIP}$ is said to be LP well-posed if for each $p\in X$, $S_p$ is a singleton and for each sequence $\{p_n\}\subset X$ with $p_n\rightarrow p$, every generalized approximating sequence for the \eqref{PSMVI} corresponding to $\{p_n\}$ converges to the unique solution. The $\mathcal{PSMVIP}$ is said to be LP well-posed in the generalized sense if for each $p\in X$, $S_p$ is nonempty and for each sequence $\{p_n\}\subset X$ with $p_n\rightarrow p$, every generalized approximating sequence for the \eqref{PSMVI} corresponding to $\{p_n\}$ has a subsequence which converges to some element of $S_p$.
\end{definition}

For each $p\in X$ and $\delta, \epsilon> 0$, consider the following approximate solution set for the \eqref{PSMVI}:
\begin{multline}
S_p(\delta,\epsilon) := \bigcup_{q\in\mathbb{B}(p, \delta)}\Big\{(z, w)\in\mathscr{H}_1\times\mathscr{H}_2:d(z,C)\leq\epsilon, d(w,Q)\leq\epsilon, \|w-Az\|\leq\epsilon; \nonumber\\
\text{ there exist } u\in\mathscr{B}_1(z,q), v\in\mathscr{B}_2(w,q) \text{ such that }\\
\left\langle u, z-x\right\rangle+\varphi(z,q)-\varphi(x,q)\leq\epsilon \; \; \forall x\in C;\\
\left\langle v, w-y\right\rangle+\psi(w,q)-\psi(y,q)\leq\epsilon \; \; \forall y\in Q\Big\},
\end{multline}
where $\mathbb{B}(p,\delta)$ is the closed ball centered at $p$ with  radius $\delta$.

\begin{remark}\label{rmk4}
It is clear that
\begin{enumerate}
\item $S_p=S_p(0,0)$ and $S_p\subset S_p(\delta,\epsilon)$ for each $\epsilon,\delta>0$.
\item  $S_p(\delta_1,\epsilon_1)\subset S_p(\delta_2,\epsilon_2)$ for any $0<\delta_1\leq\delta_2$ and $0<\epsilon_1\leq\epsilon_2$.
\end{enumerate}
\end{remark}

\section{Metric characterization of well-posedness for $\mathcal{PSMVIP}$}
\label{Sec:4}

\begin{theorem}\label{Thm11}
Let $\mathscr{H}_1$ and $\mathscr{H}_2$ be two real Hilbert spaces and, let $C$ and $Q$ be two nonempty, closed and convex subsets of $\mathscr{H}_1$ and $\mathscr{H}_2$, respectively. Let $X$ be a finite-dimensional Banach space, and let $\varphi:\mathscr{H}_1\times X\rightarrow\mathbb{R}$ and $\psi:\mathscr{H}_2\times X\rightarrow\mathbb{R}$ be two continuous functions. Let $\mathscr{B}_1:\mathscr{H}_1\times X\rightarrow 2^{\mathscr{H}_1}$ and $\mathscr{B}_2:\mathscr{H}_2\times X\rightarrow 2^{\mathscr{H}_2}$ be two strict weakly compact-valued and $H$-continuous mappings, and let $A:\mathscr{H}_1\rightarrow\mathscr{H}_2$ be a bounded linear operator. Then the approximate solution set $S_p(\delta,\epsilon)$ of \eqref{PSMVI} is closed for any $\delta,\epsilon>0$.
\end{theorem}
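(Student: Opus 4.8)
The plan is to prove that $S_p(\delta,\epsilon)$ is sequentially closed, which in the metric space $\mathscr{H}_1\times\mathscr{H}_2$ is equivalent to being closed. First I would take an arbitrary sequence $\{(z_n,w_n)\}\subset S_p(\delta,\epsilon)$ with $(z_n,w_n)\to(\tilde z,\tilde w)$ and unpack the definition: for each $n$ there exist a parameter $q_n\in\mathbb{B}(p,\delta)$ and selections $u_n\in\mathscr{B}_1(z_n,q_n)$, $v_n\in\mathscr{B}_2(w_n,q_n)$ for which $d(z_n,C)\le\epsilon$, $d(w_n,Q)\le\epsilon$, $\|w_n-Az_n\|\le\epsilon$ and the two approximate inequalities hold. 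Since $X$ is finite-dimensional, $\mathbb{B}(p,\delta)$ is compact, so after passing to a subsequence (not relabeled) we may assume $q_n\to\tilde q\in\mathbb{B}(p,\delta)$. The three feasibility constraints pass to the limit at once, because $d(\cdot,C)$ and $d(\cdot,Q)$ are continuous by Lemma \ref{Lem:2} and $A$ is continuous, giving $d(\tilde z,C)\le\epsilon$, $d(\tilde w,Q)\le\epsilon$, $\|\tilde w-A\tilde z\|\le\epsilon$.

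The next step is to produce the limiting selections at the parameter $\tilde q$. Since $(z_n,q_n)\to(\tilde z,\tilde q)$ in $\mathscr{H}_1\times X$ and $\mathscr{B}_1$ is $H$-continuous, $H(\mathscr{B}_1(z_n,q_n),\mathscr{B}_1(\tilde z,\tilde q))\to 0$; as these sets are nonempty and weakly compact they lie in $CB(\mathscr{H}_1)$, so Theorem \ref{Nadler Theorem} yields $\hat u_n\in\mathscr{B}_1(\tilde z,\tilde q)$ with $\|u_n-\hat u_n\|\le(1+1/n)H(\mathscr{B}_1(z_n,q_n),\mathscr{B}_1(\tilde z,\tilde q))\to 0$. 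By weak compactness of $\mathscr{B}_1(\tilde z,\tilde q)$ we may assume $\hat u_n\rightharpoonup\tilde u\in\mathscr{B}_1(\tilde z,\tilde q)$, hence $u_n\rightharpoonup\tilde u$; the identical argument, using $(w_n,q_n)\to(\tilde w,\tilde q)$ and the hypotheses on $\mathscr{B}_2$, gives $v_n\rightharpoonup\tilde v\in\mathscr{B}_2(\tilde w,\tilde q)$.

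Finally I would pass to the limit in $\langle u_n,z_n-x\rangle+\varphi(z_n,q_n)-\varphi(x,q_n)\le\epsilon$, $x\in C$, and in the analogous $\psi$-inequality on $Q$. The one non-routine point is the bilinear term: writing $\langle u_n,z_n-x\rangle=\langle u_n,\tilde z-x\rangle+\langle u_n,z_n-\tilde z\rangle$, the first summand tends to $\langle\tilde u,\tilde z-x\rangle$ by weak convergence of $\{u_n\}$, while $|\langle u_n,z_n-\tilde z\rangle|\le\|u_n\|\,\|z_n-\tilde z\|\to 0$ since weakly convergent sequences are bounded; together with $\varphi(z_n,q_n)\to\varphi(\tilde z,\tilde q)$ and $\varphi(x,q_n)\to\varphi(x,\tilde q)$ (continuity of $\varphi$) this gives $\langle\tilde u,\tilde z-x\rangle+\varphi(\tilde z,\tilde q)-\varphi(x,\tilde q)\le\epsilon$ for all $x\in C$, and likewise on $Q$. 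Combined with the feasibility constraints and $\tilde q\in\mathbb{B}(p,\delta)$, this exhibits $(\tilde z,\tilde w)$ as a member of $S_p(\delta,\epsilon)$.

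I expect the principal obstacle to be the reliance on finite-dimensionality of $X$: it is exactly what lets us extract a norm-convergent subsequence $q_n\to\tilde q$ so that $H$-continuity of $\mathscr{B}_1,\mathscr{B}_2$ (stated with respect to the norm) can be invoked; without it one would only get weak convergence of the parameters, which is insufficient. The second delicate point is the weak--strong pairing in the bilinear term, handled by the decomposition above rather than a naive passage to the limit. A minor remark worth including: replacing $\{(z_n,w_n)\}$ by a subsequence to extract $\tilde q$ is harmless, since the whole sequence already converges to $(\tilde z,\tilde w)$ and we only need to exhibit one admissible witness $(\tilde q,\tilde u,\tilde v)$ for membership in $S_p(\delta,\epsilon)$.
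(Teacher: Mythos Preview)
Your proposal is correct and follows essentially the same route as the paper's proof: take a convergent sequence in $S_p(\delta,\epsilon)$, use finite-dimensionality of $X$ to extract a convergent subsequence of parameters $q_n\to\tilde q\in\mathbb{B}(p,\delta)$, apply Nadler's theorem together with $H$-continuity and weak compactness to obtain weakly convergent selections $u_n\rightharpoonup\tilde u\in\mathscr{B}_1(\tilde z,\tilde q)$ and $v_n\rightharpoonup\tilde v\in\mathscr{B}_2(\tilde w,\tilde q)$, and then pass to the limit in all constraints. Your explicit decomposition $\langle u_n,z_n-x\rangle=\langle u_n,\tilde z-x\rangle+\langle u_n,z_n-\tilde z\rangle$ to handle the weak--strong pairing is in fact more carefully justified than the paper's own passage to the limit, which simply writes ``taking $n\to\infty$ and using our assumptions.''
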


\begin{proof}
Let $\{(z_n,w_n)\}\subset S_p(\delta,\epsilon)$ be any sequence such that $(z_n,w_n)\rightarrow (\tilde{z},\tilde{w})$. Since $\{(z_n,w_n)\}\subset S_p(\delta,\epsilon)$, there exist $q_n\subset\mathbb{B}(p,\delta)$ and a sequence $\{(u_n,v_n)\}\subset\mathscr{H}_1\times\mathscr{H}_2$ with $u_n\in\mathscr{B}_1(z_n,q_n)$, $v_n\in\mathscr{B}_2(w_n,q_n)$ such that
\begin{equation} \label{GPSMVI3}
\begin{cases}
d(z_n,C)\leq\epsilon, d(w_n,Q)\leq\epsilon, \|w_n-Az_n\|\leq \epsilon;\\
\left\langle u_n, z_n-x\right\rangle+\varphi(z_n,q_n)-\varphi(x,q_n)\leq\epsilon \; \; \forall x\in C;\\
\left\langle v_n, w_n-y\right\rangle+\psi(w_n,q_n)-\psi(y,q_n)\leq\epsilon \; \; \forall y\in Q.
\end{cases}
\end{equation}

Since $X$ is finite-dimensional, without loss of generality, we may assume that $q_n\rightarrow\bar{q}\in\mathbb{B}(p,\delta)$.

Since $\mathscr{B}_1(z_n,q_n)$ and $\mathscr{B}_1(\tilde{z},\bar{q})$ are nonempty weakly compact, $\mathscr{B}_1(z_n,q_n), \mathscr{B}_1(\tilde{z},\bar{q})\in CB(\mathscr{H}_1)$ for all $n\in\mathbb{N}$. Therefore, by Theorem \ref{Nadler Theorem}, for each $u_n\in\mathscr{B}_1(z_n,q_n)$, there exists $\tilde{v}_n\in\mathscr{B}_1(\tilde{z},\bar{q})$ such that
\begin{align*}
\|u_n-\tilde{v}_n\|\leq (1+1/n) H(\mathscr{B}_1(z_n,q_n),\mathscr{B}_1(\tilde{z},\bar{q}))\rightarrow 0 \text{ as } n\rightarrow\infty.
\end{align*}

This implies that $u_n-\tilde{v}_n\rightarrow 0$ as $ n\rightarrow\infty$. Since $\mathscr{B}_1(z^*,\bar{q})$ is weakly compact, without loss of generality, we may assume that $\tilde{v}_n\rightharpoonup\tilde{u}\in\mathscr{B}_1(z^*,\bar{q})$. This implies that  $u_n\rightharpoonup\tilde{u}\in\mathscr{B}_1(z^*,\bar{q})$ as $ n\rightarrow\infty$. Similarly, using the assumptions on $\mathscr{B}_2$, we can find $\tilde{v}$ such that $v_n\rightharpoonup\tilde{v}\in\mathscr{B}_2(w^*,\bar{q})$. Taking $n\rightarrow\infty$ in \eqref{GPSMVI3} and using our assumptions, we obtain that
\begin{equation*}
\begin{cases}
d(\tilde{z},C)\leq\epsilon, d(\tilde{w},Q)\leq\epsilon, \|\tilde{w}-A\tilde{z}\|\leq \epsilon;\\
\left\langle\tilde{u}, \tilde{z}-x\right\rangle+\varphi(\tilde{z},\bar{q})-\varphi(x,\bar{q})\leq\epsilon \; \; \forall x\in C;\\
\left\langle\tilde{v}, \tilde{w}-y\right\rangle+\psi(\tilde{w},\bar{q})-\psi(y,\bar{q})\leq\epsilon \; \; \forall y\in Q.
\end{cases}
\end{equation*}

This implies that $(\tilde{z},\tilde{w})\in S_p(\delta,\epsilon)$.
\end{proof}

\begin{theorem}\label{Thm12}
Let $\mathscr{H}_1$ and $\mathscr{H}_2$ be two real Hilbert spaces, and let $C$ and $Q$ be two nonempty, closed and convex subsets of $\mathscr{H}_1$ and $\mathscr{H}_2$, respectively. Let $X$ be a Banach space and let $\varphi:\mathscr{H}_1\times X\rightarrow\mathbb{R}$ and $\psi:\mathscr{H}_2\times X\rightarrow\mathbb{R}$ be two continuous functions. Let $\mathscr{B}_1:\mathscr{H}_1\times X\rightarrow 2^{\mathscr{H}_1}$ and $\mathscr{B}_2:\mathscr{H}_2\times X\rightarrow 2^{\mathscr{H}_2}$ be two strict  weakly compact-valued and $H$-continuous mappings, and let $A:\mathscr{H}_1\rightarrow\mathscr{H}_2$ is a bounded linear operator. Then, for each $p \in X$, we have
\begin{align*}
S_p=\bigcap_{\delta>0,\epsilon>0}S_p(\delta,\epsilon).
\end{align*}
\end{theorem}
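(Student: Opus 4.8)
The plan is to prove the two inclusions separately. The inclusion $S_p \subseteq \bigcap_{\delta>0,\epsilon>0} S_p(\delta,\epsilon)$ is immediate from Remark \ref{rmk4}(1), which already records that $S_p \subseteq S_p(\delta,\epsilon)$ for every $\delta,\epsilon>0$. So the substance of the proof lies in the reverse inclusion.

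For the reverse inclusion, I would fix $p \in X$ and a point $(\tilde z,\tilde w) \in \bigcap_{\delta>0,\epsilon>0} S_p(\delta,\epsilon)$, and aim to show $(\tilde z,\tilde w) \in S_p$. Specializing $\delta = \epsilon = 1/n$, membership in $S_p(1/n,1/n)$ produces, for each $n$, a parameter $q_n \in \mathbb{B}(p,1/n)$ and selections $u_n \in \mathscr{B}_1(\tilde z,q_n)$, $v_n \in \mathscr{B}_2(\tilde w,q_n)$ satisfying $d(\tilde z,C)\le 1/n$, $d(\tilde w,Q)\le 1/n$, $\|\tilde w - A\tilde z\|\le 1/n$, together with the two $(1/n)$-approximate variational inequalities. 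Since $C$ and $Q$ are closed, letting $n\to\infty$ yields $\tilde z \in C$, $\tilde w \in Q$ and $\tilde w = A\tilde z$; and $\|q_n - p\|\le 1/n$ forces $q_n \to p$. Note that no finite-dimensionality of $X$ is needed here, unlike in Theorem \ref{Thm11}, precisely because the centre $p$ is fixed rather than varying.

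The core of the argument is then the limit transfer in the two inequalities, carried out exactly as in the proof of Theorem \ref{Thm13:2}. Using that $\mathscr{B}_1(\tilde z,q_n),\ \mathscr{B}_1(\tilde z,p) \in CB(\mathscr{H}_1)$ and Nadler's theorem (Theorem \ref{Nadler Theorem}), I would pick $\tilde v_n \in \mathscr{B}_1(\tilde z,p)$ with $\|u_n - \tilde v_n\| \le (1+1/n)\,H(\mathscr{B}_1(\tilde z,q_n),\mathscr{B}_1(\tilde z,p))$; the $H$-continuity of $\mathscr{B}_1$ together with $(\tilde z,q_n)\to(\tilde z,p)$ gives $u_n - \tilde v_n \to 0$, and the weak compactness of $\mathscr{B}_1(\tilde z,p)$ (via Lemma \ref{Lem:weaklycompact}) lets me assume $\tilde v_n \rightharpoonup \tilde u \in \mathscr{B}_1(\tilde z,p)$, hence $u_n \rightharpoonup \tilde u \in \mathscr{B}_1(\tilde z,p)$; similarly $v_n \rightharpoonup \tilde v \in \mathscr{B}_2(\tilde w,p)$. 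Passing to the limit in $\langle u_n, \tilde z - x\rangle + \varphi(\tilde z,q_n) - \varphi(x,q_n) \le 1/n$ — using $\langle u_n, \tilde z - x\rangle \to \langle \tilde u, \tilde z - x\rangle$ since $\tilde z - x$ is a fixed vector, and the continuity of $\varphi$ with $q_n\to p$ — gives $\langle \tilde u, \tilde z - x\rangle + \varphi(\tilde z,p) - \varphi(x,p)\le 0$ for all $x\in C$, and likewise the $\psi$-inequality for all $y \in Q$. Hence $(\tilde z,\tilde w)\in S_p$, which completes the proof.

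I expect the only delicate point to be the simultaneous use of the weak convergence of the selections and the norm convergence of the parameters in the limit transfer (and the implicit reading of ``$H$-continuous'' for the two-variable mappings $\mathscr{B}_i$ as joint $H$-continuity on $\mathscr{H}_i\times X$); everything else is a routine repetition of the estimates already used in Theorems \ref{Thm13:2} and \ref{Thm11}.
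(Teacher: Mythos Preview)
Your proposal is correct and follows essentially the same route as the paper's own proof: both establish the easy inclusion via Remark~\ref{rmk4}, then for the reverse inclusion pick $\delta_n,\epsilon_n\downarrow 0$ (you take $1/n$, the paper takes generic null sequences), extract $q_n\to p$ and selections $u_n,v_n$, and use Nadler's theorem together with $H$-continuity and weak compactness of the values of $\mathscr{B}_1,\mathscr{B}_2$ to pass to the limit in the approximate inequalities. Your explicit remark that finite-dimensionality of $X$ is unnecessary here (in contrast to Theorem~\ref{Thm11}) is a helpful clarification that the paper leaves implicit.
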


\begin{proof}
It is clear that $S_p \subset S_p(\delta,\epsilon)$ for any $\delta,\epsilon>0$ and any $p \in X$. Consequently, $S_p\subset \bigcap_{\delta>0,\epsilon>0}S_p(\delta,\epsilon)$ for each $p\in X$.

Now, we claim that $\bigcap_{\delta>0,\epsilon>0}S_p(\delta,\epsilon) \subset S_p$ for any $p\in X$.
Indeed, fix $p \in X$ and let $(z,w)\in \bigcap_{\delta>0,\epsilon>0}S_p(\delta,\epsilon)$ be arbitrary.
Then, without loss of generality, there exist sequences $\delta_n>0$, $\epsilon_n>0$ with $\delta_n\rightarrow 0$ and $\epsilon_n\rightarrow 0$ as $n\rightarrow\infty$ such that $(z,w)\in S_p(\delta_n,\epsilon_n)$.
Thus there exist $q_n\in B(p,\delta_n)$ and $\{(u_n,v_n)\}\subset\mathscr{H}_1\times\mathscr{H}_2$ with $u_n\in\mathscr{B}_1(z,q_n)$, $v_n\in\mathscr{B}_2(w,q_n)$ such that
\begin{equation}\label{GPSMVI4}
\begin{cases}
d(z,C)\leq\epsilon_n, d(w,Q)\leq\epsilon_n, \|w-Az\|\leq \epsilon_n;\\
\left\langle u_n, z-x\right\rangle+\varphi(z,q_n)-\varphi(x,q_n)\leq\epsilon_n \; \; \forall x\in C;\\
\left\langle v_n, w-y\right\rangle+\psi(w,q_n)-\psi(y,q_n)\leq\epsilon_n \; \; \forall y\in Q.
\end{cases}
\end{equation}

Since $q_n\in\mathbb{B}(p,\delta_n)$ and $\delta_n\rightarrow 0$ as $n\rightarrow\infty$, $q_n\rightarrow p$ as $n\rightarrow\infty$.

Again, since $\mathscr{B}_1(z,q_n)$ and $\mathscr{B}_1(z,p)$ are nonempty weakly compact, $\mathscr{B}_1(z,q_n), \mathscr{B}_1(z,p)\in CB(\mathscr{H}_1)$ for all $n\in\mathbb{N}$. Therefore, by Theorem \ref{Nadler Theorem}, for each $u_n\in\mathscr{B}_1(z,q_n)$, there exists $\tilde{v}_n\in\mathscr{B}_1(z,p)$ such that
\begin{align*}
\|u_n-\tilde{v}_n\|\leq (1+1/n) H(\mathscr{B}_1(z,q_n),\mathscr{B}_1(z,p))\rightarrow 0 \text{ as } n\rightarrow\infty.
\end{align*}

This implies that $u_n-\tilde{v}_n\rightarrow 0$ as $ n\rightarrow\infty$. Since $\mathscr{B}_1(z,p)$ is weakly compact, without loss of generality, we may assume that $\tilde{v}_n\rightharpoonup u\in\mathscr{B}_1(z,p)$. This implies that  $u_n\rightharpoonup u\in\mathscr{B}_1(z,p)$ as $ n\rightarrow\infty$. Similarly, using the assumptions on $\mathscr{B}_2$ we can find $v$ such that $v_n\rightharpoonup v\in\mathscr{B}_2(w,p)$.

Taking the limit as $ n\rightarrow\infty$ in \eqref{GPSMVI4} and using our assumptions, we obtain
\begin{equation*} 
\begin{cases}
z\in C, w\in Q, w=Az;\\
\left\langle u, z-x\right\rangle+\varphi(z,p)-\varphi(x,p)\leq 0 \; \; \forall x\in C;\\
\left\langle v, w-y\right\rangle+\psi(w,p)-\psi(y,p)\leq 0 \; \; \forall y\in Q.
\end{cases}
\end{equation*}

This implies that $(z,w)\in S_p$ for any $p\in X$. This completes the proof.
\end{proof}

\begin{theorem}\label{Thm13}
Let $\mathscr{H}_1$ and $\mathscr{H}_2$ be two real Hilbert spaces, and let $C$ and $Q$ be two nonempty, closed and convex subsets of $\mathscr{H}_1$ and $\mathscr{H}_2$, respectively. Let $X$ be a Banach space and let $\varphi:\mathscr{H}_1\times X\rightarrow\mathbb{R}$ and $\psi:\mathscr{H}_2\times X\rightarrow\mathbb{R}$ be two functions. Let $\mathscr{B}_1:\mathscr{H}_1\times X\rightarrow 2^{\mathscr{H}_1}$ and $\mathscr{B}_2:\mathscr{H}_2\times X\rightarrow 2^{\mathscr{H}_2}$ be two strict multivalued mappings, and let $A:\mathscr{H}_1\rightarrow\mathscr{H}_2$ be a bounded linear operator.
Then the $\mathcal{PSMVIP}$ is LP well-posed if and only if for each $p\in X$, the solution set $S_p$ of the \eqref{PSMVI} is nonempty and
\begin{align}\label{Thm13:con1}
\text{diam}(S_p(\delta,\epsilon))\rightarrow 0 \text{ as }(\delta,\epsilon)\rightarrow (0,0).
\end{align}
\end{theorem}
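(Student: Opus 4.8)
The plan is to mirror the structure of Theorem~\ref{Thm13:2}, adapting the argument to the parametric setting where perturbations now run over balls $\mathbb{B}(p,\delta)$ in the parameter space $X$ as well as over the relaxation level $\epsilon$. The statement is an equivalence, so I would split into necessity and sufficiency.

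\textbf{Necessity.} Assume the $\mathcal{PSMVIP}$ is LP well-posed and fix $p\in X$. By definition $S_p$ is a singleton, hence nonempty. To establish \eqref{Thm13:con1}, argue by contradiction: if $\text{diam}(S_p(\delta,\epsilon))\nrightarrow 0$ as $(\delta,\epsilon)\rightarrow(0,0)$, then there exist $\bar\delta>0$, sequences $\delta_n\rightarrow 0$, $\epsilon_n\rightarrow 0$, and points $(z_n,w_n),(z_n',w_n')\in S_p(\delta_n,\epsilon_n)$ with $\|(z_n,w_n)-(z_n',w_n')\|>\bar\delta$. Unpacking the definition of $S_p(\delta_n,\epsilon_n)$, there are parameters $q_n,q_n'\in\mathbb{B}(p,\delta_n)$ and selections $u_n\in\mathscr{B}_1(z_n,q_n)$, $v_n\in\mathscr{B}_2(w_n,q_n)$ (and primed versions) satisfying the relaxed inequalities. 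Since $\delta_n\rightarrow 0$, we have $q_n\rightarrow p$ and $q_n'\rightarrow p$, so both $\{(z_n,w_n)\}$ and $\{(z_n',w_n')\}$ are generalized approximating sequences for \eqref{PSMVI} corresponding to $\{q_n\}$ and $\{q_n'\}$ respectively. By LP well-posedness each converges to the unique element of $S_p$, forcing $\|(z_n,w_n)-(z_n',w_n')\|\rightarrow 0$, a contradiction.

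\textbf{Sufficiency.} Assume $S_p$ is nonempty with \eqref{Thm13:con1} for every $p$. Since $S_p\subset S_p(\delta,\epsilon)$ for all $\delta,\epsilon>0$ and $\text{diam}(S_p(\delta,\epsilon))\rightarrow 0$, it follows that $S_p$ has diameter zero, hence is a singleton, say $\{(x^*,y^*)\}$. Now fix a sequence $p_n\rightarrow p$ and a generalized approximating sequence $\{(z_n,w_n)\}$ for \eqref{PSMVI} corresponding to $\{p_n\}$, with associated $\epsilon_n\rightarrow 0$ and selections $u_n\in\mathscr{B}_1(z_n,p_n)$, $v_n\in\mathscr{B}_2(w_n,p_n)$. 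Set $\delta_n:=\|p_n-p\|\vee\epsilon_n$; then $p_n\in\mathbb{B}(p,\delta_n)$ shows $(z_n,w_n)\in S_p(\delta_n,\epsilon_n)$ with $\delta_n\rightarrow 0$ and $\epsilon_n\rightarrow 0$. For $m\ge n$ we also have $(z_m,w_m)\in S_p(\delta_m,\epsilon_m)\subset S_p(\delta_n,\epsilon_n)$ by the monotonicity in Remark~\ref{rmk4}, so $\|(z_n,w_n)-(z_m,w_m)\|\le\text{diam}(S_p(\delta_n,\epsilon_n))\rightarrow 0$, i.e. $\{(z_n,w_n)\}$ is Cauchy and converges strongly to some $(\tilde z,\tilde w)$. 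It remains to identify $(\tilde z,\tilde w)$ as the solution: since $(\tilde z,\tilde w)\in S_p(\delta_n,\epsilon_n)$ for every $n$ (each $S_p(\delta_n,\epsilon_n)$ being closed, e.g. by Theorem~\ref{Thm11} in the finite-dimensional case, or directly from the Nadler-plus-$H$-continuity argument), Theorem~\ref{Thm12} gives $(\tilde z,\tilde w)\in\bigcap_{\delta,\epsilon>0}S_p(\delta,\epsilon)=S_p$, so $(\tilde z,\tilde w)=(x^*,y^*)$. Thus every generalized approximating sequence converges to the unique solution, which is exactly LP well-posedness.

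\textbf{Main obstacle.} The delicate point is the passage to the limit inside the variational inequalities when extracting $(\tilde z,\tilde w)\in S_p$ — one must reconcile the parameter perturbation $p_n\rightarrow p$ (or $q_n\rightarrow p$) with the set-valued convergence of $\mathscr{B}_1(z_n,p_n)$ and $\mathscr{B}_2(w_n,p_n)$. This is handled exactly as in Theorems~\ref{Thm13:2} and~\ref{Thm11}: apply Nadler's theorem (Theorem~\ref{Nadler Theorem}) to produce $\tilde v_n\in\mathscr{B}_1(\tilde z,p)$ with $\|u_n-\tilde v_n\|\le(1+1/n)H(\mathscr{B}_1(z_n,p_n),\mathscr{B}_1(\tilde z,p))\rightarrow 0$ by joint $H$-continuity, then use weak compactness of $\mathscr{B}_1(\tilde z,p)$ to pass to a weakly convergent subsequence $\tilde v_n\rightharpoonup\tilde u\in\mathscr{B}_1(\tilde z,p)$, whence $u_n\rightharpoonup\tilde u$; the continuity of $\varphi,\psi$ and weak lower semicontinuity then allow the limit in the bilinear-plus-function terms. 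Care is needed in the sufficiency direction to invoke \eqref{Thm13:con1} simultaneously in $\delta$ and $\epsilon$ via a single scalar $\delta_n$ dominating both $\|p_n-p\|$ and $\epsilon_n$, which is what makes the Cauchy argument go through uniformly.
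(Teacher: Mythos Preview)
Your necessity argument is correct and matches the paper's proof essentially verbatim.

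Your sufficiency argument, however, takes an unnecessary detour that creates a genuine gap under the hypotheses of Theorem~\ref{Thm13}. You first prove $\{(z_n,w_n)\}$ is Cauchy, obtain a limit $(\tilde z,\tilde w)$, and then try to \emph{identify} $(\tilde z,\tilde w)$ as an element of $S_p$ by invoking closedness of $S_p(\delta_n,\epsilon_n)$ (Theorem~\ref{Thm11}) and the intersection formula (Theorem~\ref{Thm12}), backed by the Nadler-plus-$H$-continuity machinery. But Theorem~\ref{Thm13} assumes \emph{nothing} about continuity of $\varphi,\psi$ or $H$-continuity / weak compact-valuedness of $\mathscr{B}_1,\mathscr{B}_2$; those hypotheses appear only in Theorems~\ref{Thm11}, \ref{Thm12}, and~\ref{Thm14}. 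So under the stated assumptions you have no right to conclude that $S_p(\delta_n,\epsilon_n)$ is closed, nor that $\bigcap S_p(\delta,\epsilon)=S_p$. (There is also a minor slip: the inclusion $S_p(\delta_m,\epsilon_m)\subset S_p(\delta_n,\epsilon_n)$ for $m\ge n$ needs monotonicity of $\delta_n,\epsilon_n$, which $\|p_n-p\|\vee\epsilon_n$ does not provide.)

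The paper avoids all of this with a one-line observation you overlooked: since by hypothesis $S_p$ is \emph{nonempty}, pick its unique element $(\tilde z,\tilde w)$. Then $(\tilde z,\tilde w)\in S_p\subset S_p(\delta_n,\epsilon_n)$ for every $n$, and since also $(z_n,w_n)\in S_p(\delta_n,\epsilon_n)$, the diameter condition gives directly
\[
\|(z_n,w_n)-(\tilde z,\tilde w)\|\le \text{diam}(S_p(\delta_n,\epsilon_n))\rightarrow 0.
\]
No Cauchy argument, no limit identification, no continuity hypotheses needed. This is precisely why Theorem~\ref{Thm13} can be stated without the regularity assumptions, while Theorem~\ref{Thm14} (where $S_p\neq\emptyset$ is \emph{not} assumed and the limit really must be identified) requires them.
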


\begin{proof}
Suppose the $\mathcal{PSMVIP}$ is LP well-posed. Then, by definition, $S_p$ is nonempty for each $p\in X$.

We claim that $\text{diam}(S_p(\delta,\epsilon))\rightarrow 0 \text{ as }(\delta,\epsilon)\rightarrow (0,0)$.
Suppose to the contrary that $\text{diam}S_p(\delta,\epsilon)\nrightarrow 0$ as $(\delta,\epsilon)\rightarrow (0,0)$.

Then there exist $\bar{\delta}>0, 0<\delta_n\rightarrow 0, 0<\epsilon_n\rightarrow 0, (z_n,w_n)\in S_p(\delta_n,\epsilon_n), (z_n^\prime,w_n^\prime)\in S_p(\delta_n,\epsilon_n)$ such that
\begin{align}\label{ineq1}
\|(z_n,w_n)-(z_n^\prime,w_n^\prime)\|>\bar{\delta} \; \;  \forall n\in\mathbb{N}.
\end{align}

Since $(z_n,w_n)\in S_p(\delta_n,\epsilon_n), (z_n^\prime,w_n^\prime)\in S_p(\delta_n,\epsilon_n)$, there exist $q_n,q_n^\prime\in\mathbb{B}(p,\delta_n)$ and  $u_n\in\mathscr{B}_1(z_n,q_n)$, $v_n\in \mathscr{B}_2(w_n,q_n)$ and $u_n^\prime\in\mathscr{B}_1(z_n^\prime,q_n^\prime)$, $v_n^\prime\in\mathscr{B}_2(w_n^\prime,q_n^\prime)$
such that
\begin{equation*} 
\begin{cases}
d(z_n,C)\leq\epsilon_n, d(w_n,Q)\leq\epsilon_n, \|w_n-Az_n\|\leq \epsilon_n;\\
\left\langle u_n, z_n-x\right\rangle+\varphi(z_n,q_n)-\varphi(x,q_n)\leq\epsilon_n \; \; \forall x\in C;\\
\left\langle v_n, w_n-y\right\rangle+\psi(w_n,q_n)-\psi(y,q_n)\leq\epsilon_n \; \; \forall y\in Q
\end{cases}
\end{equation*}
and
\begin{equation*} 
\begin{cases}
d(z_n^\prime,C)\leq\epsilon_n, d(w_n^\prime,Q)\leq\epsilon_n, \|w_n^\prime-Az_n^\prime\|\leq \epsilon_n;\\
\left\langle u_n^\prime, z_n^\prime-x\right\rangle+\varphi(z_n^\prime,q_n^\prime)-\varphi(x,q_n^\prime)\leq\epsilon_n \; \; \forall x\in C;\\
\left\langle v_n^\prime, w_n^\prime-y\right\rangle+\psi(w_n^\prime,q_n^\prime)-\psi(y,q_n^\prime)\leq\epsilon_n \; \; \forall y\in Q.
\end{cases}
\end{equation*}

Since $q_n,q_n^\prime\in\mathbb{B}(p,\delta_n)$ and $\delta_n\rightarrow 0$ as $n\rightarrow\infty$, $q_n\rightarrow p$ and $q_n^\prime\rightarrow p$ as $n\rightarrow\infty$. This implies that $\{(z_n,w_n)\}$ and $\{(z_n^\prime,w_n^\prime)\}$ are generalized approximating sequences for the \eqref{PSMVI} corresponding to $\{q_n\}$ and $\{q_n^\prime\}$, respectively. Therefore, by the definition of LP well-posedness of the $\mathcal{PSMVIP}$, the sequences $\{(z_n,w_n)\}$ and $\{(z_n^\prime,w_n^\prime)\}$ converge to the unique solution of the \eqref{PSMVI}, which contradicts \eqref{ineq1}.

Conversely, suppose that $S_p$ is nonempty for each $p\in X$ and \eqref{Thm13:con1} holds. Since $S_p\subset S_p(\delta,\epsilon)$ for any $\delta,\epsilon>0$ and $p\in X$,
the set $S_p$ is a singleton for each $p\in X$.

Let $\{p_n\}$ be any sequence in $X$ such that $p_n\rightarrow p$ as $n\rightarrow\infty$. Let $\{(z_n,w_n)\}\subset\mathscr{H}_1\times\mathscr{H}_2$ be any generalized approximating sequence for the \eqref{PSMVI} corresponding to $\{p_n\}$. Then there exist a sequence $\{(u_n,v_n)\}\subset\mathscr{H}_1\times\mathscr{H}_2$
with $u_n\in\mathscr{B}_1(z_n,p_n)$, $v_n\in\mathscr{B}_2(w_n,p_n)$ and a positive sequence $\{\epsilon_n\}$ of real numbers with $\epsilon_n\rightarrow 0$ such that
\begin{equation*} 
\begin{cases}
d(z_n,C)\leq\epsilon_n, d(w_n,Q)\leq\epsilon_n, \|w_n-Az_n\|\leq \epsilon_n;\\
\left\langle u_n, z_n-x\right\rangle+\varphi(z_n,p_n)-\varphi(x,p_n)\leq\epsilon_n \; \; \forall x\in C;\\
\left\langle v_n, w_n-y\right\rangle+\psi(w_n,p_n)-\psi(y,p_n)\leq\epsilon_n \; \; \forall y\in Q.
\end{cases}
\end{equation*}

Let $\delta_n=\|p_n-p\|$. Then $p_n\in\mathbb{B}(p,\delta_n)$ and $\delta_n\rightarrow 0$ as $n\rightarrow\infty$. Thus, $(z_n,w_n)\in S_p(\delta_n, \epsilon_n)$.
Let $(\tilde{z},\tilde{w})\in S_p$. Then $(\tilde{z},\tilde{w})\in S_p(\delta_n,\epsilon_n)$.

Now, we have
\begin{align}
\|(z_n,w_n)-(\tilde{z},\tilde{w})\|\leq diam(S_p(\delta_n,\epsilon_n))\rightarrow 0 \text{ as } n\rightarrow 0.\nonumber
\end{align}

Since $\{(z_n,w_n)\}$ is arbitrary, we see that every approximating sequence for the \eqref{PSMVI} corresponding to $\{p_n\}$ converges to the unique solution.
Hence the $\mathcal{PSMVIP}$ is LP well-posed and the proof is complete.
\end{proof}

\begin{theorem}\label{Thm14}
Let $\mathscr{H}_1$ and $\mathscr{H}_2$ be two real Hilbert spaces, and let $C$ and $Q$ be two nonempty, closed and convex subsets of $\mathscr{H}_1$ and $\mathscr{H}_2$, respectively. Let $X$ be a Banach space and let $\varphi:\mathscr{H}_1\times X\rightarrow\mathbb{R}$ and $\psi:\mathscr{H}_2\times X\rightarrow\mathbb{R}$ be two continuous functions. Let $\mathscr{B}_1:\mathscr{H}_1\times X\rightarrow 2^{\mathscr{H}_1}$ and $\mathscr{B}_2:\mathscr{H}_2\times X\rightarrow 2^{\mathscr{H}_2}$ be two strict weakly compact-valued and $H$-continuous mappings, and let $A:\mathscr{H}_1\rightarrow\mathscr{H}_2$ be a bounded linear operator.
Then the $\mathcal{PSMVIP}$ is LP well-posed if and only if for every $p\in X$, we have
\begin{align}\label{ineq2}
S_p(\delta,\epsilon)\neq\emptyset \; \; \forall\delta,\epsilon>0 \text{ and } \text{diam}(S_p(\delta,\epsilon))\rightarrow 0 \text{ as }(\delta,\epsilon)\rightarrow (0,0).
\end{align}
\end{theorem}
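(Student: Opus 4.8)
The plan is to derive Theorem~\ref{Thm14} from Theorem~\ref{Thm13}. Recall that Theorem~\ref{Thm13} already characterizes LP well-posedness of the $\mathcal{PSMVIP}$ by the conjunction ``$S_p\neq\emptyset$ for each $p$'' and ``$\mathrm{diam}(S_p(\delta,\epsilon))\to 0$ as $(\delta,\epsilon)\to(0,0)$''. Since \eqref{ineq2} retains the diameter condition verbatim, it suffices to show, for a fixed $p\in X$, that the nonemptiness of every approximate solution set $S_p(\delta,\epsilon)$ together with the diameter condition is equivalent to the nonemptiness of $S_p$ together with the diameter condition. The forward implication is immediate: if the $\mathcal{PSMVIP}$ is LP well-posed, Theorem~\ref{Thm13} gives $S_p\neq\emptyset$ and $\mathrm{diam}(S_p(\delta,\epsilon))\to 0$, and then the inclusion $S_p\subset S_p(\delta,\epsilon)$ from Remark~\ref{rmk4} forces $S_p(\delta,\epsilon)\neq\emptyset$ for all $\delta,\epsilon>0$, which is exactly \eqref{ineq2}.

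For the converse, assume \eqref{ineq2} holds for every $p\in X$ and fix $p$. Choose decreasing positive sequences $\delta_n\to 0$ and $\epsilon_n\to 0$, and, using the nonemptiness, pick $(z_n,w_n)\in S_p(\delta_n,\epsilon_n)$. By the monotonicity in Remark~\ref{rmk4}, for $m\ge n$ both $(z_n,w_n)$ and $(z_m,w_m)$ lie in $S_p(\delta_n,\epsilon_n)$, so $\|(z_n,w_n)-(z_m,w_m)\|\le\mathrm{diam}(S_p(\delta_n,\epsilon_n))\to 0$; hence $\{(z_n,w_n)\}$ is Cauchy in $\mathscr{H}_1\times\mathscr{H}_2$ and converges strongly to some $(\tilde z,\tilde w)$. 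The remaining task is to prove $(\tilde z,\tilde w)\in S_p$, because then $S_p\neq\emptyset$ and Theorem~\ref{Thm13} (whose diameter hypothesis is part of \eqref{ineq2}) yields that the $\mathcal{PSMVIP}$ is LP well-posed.

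To identify the limit I would repeat the limiting argument from the second half of the proof of Theorem~\ref{Thm12}. From $(z_n,w_n)\in S_p(\delta_n,\epsilon_n)$ extract $q_n\in\mathbb{B}(p,\delta_n)$, $u_n\in\mathscr{B}_1(z_n,q_n)$ and $v_n\in\mathscr{B}_2(w_n,q_n)$ witnessing membership; since $\delta_n\to 0$ we have $q_n\to p$. Applying Theorem~\ref{Nadler Theorem} to the pair $\mathscr{B}_1(z_n,q_n),\mathscr{B}_1(\tilde z,p)\in CB(\mathscr{H}_1)$ and using the $H$-continuity of $\mathscr{B}_1$ in both arguments, we obtain $\tilde v_n\in\mathscr{B}_1(\tilde z,p)$ with $\|u_n-\tilde v_n\|\le(1+1/n)H(\mathscr{B}_1(z_n,q_n),\mathscr{B}_1(\tilde z,p))\to 0$; weak compactness of $\mathscr{B}_1(\tilde z,p)$ then lets us pass to a subsequence along which $\tilde v_n\rightharpoonup\tilde u\in\mathscr{B}_1(\tilde z,p)$, whence $u_n\rightharpoonup\tilde u$, and similarly $v_n\rightharpoonup\tilde v\in\mathscr{B}_2(\tilde w,p)$. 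The constraints pass to the limit using closedness of $C$ and $Q$ (so $d(z_n,C)\le\epsilon_n$, $d(w_n,Q)\le\epsilon_n$ give $\tilde z\in C$, $\tilde w\in Q$) and boundedness of $A$ (so $\|w_n-Az_n\|\le\epsilon_n$ gives $\tilde w=A\tilde z$). Finally, in $\langle u_n,z_n-x\rangle+\varphi(z_n,q_n)-\varphi(x,q_n)\le\epsilon_n$ one combines the weak convergence $u_n\rightharpoonup\tilde u$ with the strong convergence $z_n\to\tilde z$ to pass to the limit in $\langle u_n,z_n-x\rangle$, and uses the continuity of $\varphi$ together with $q_n\to p$, obtaining $\langle\tilde u,\tilde z-x\rangle+\varphi(\tilde z,p)-\varphi(x,p)\le 0$ for all $x\in C$; the analogous limit for $\psi$ gives the third relation, so $(\tilde z,\tilde w)\in S_p$.

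The main obstacle is that $X$ is assumed only to be a Banach space (unlike in Theorem~\ref{Thm11}, where finite-dimensionality of $X$ was used), so one cannot invoke closedness of $S_p(\delta,\epsilon)$ and cannot close the argument by a Cantor-type intersection of nested closed sets. Instead the diameter hypothesis itself must be used to produce the Cauchy sequence, and membership of the limit in $S_p$ must be checked directly by the procedure just described. Within that procedure the only genuinely delicate point is the passage to the limit in the bilinear term $\langle u_n,z_n-x\rangle$, which is legitimate because one factor converges strongly while the other converges only weakly; the rest is a routine application of Nadler's theorem, $H$-continuity, weak compactness of the values of $\mathscr{B}_1,\mathscr{B}_2$, and continuity of $\varphi,\psi$, exactly as in the earlier proofs.
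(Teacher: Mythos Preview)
Your proposal is correct and the analytic core---the Cauchy argument via the diameter condition, followed by Nadler's theorem, $H$-continuity, weak compactness, and passage to the limit in the inequalities---is exactly the paper's. The only organizational difference is that you package the sufficiency as ``first show $S_p\neq\emptyset$ via a constructed sequence in $S_p(\delta_n,\epsilon_n)$, then invoke Theorem~\ref{Thm13}'', whereas the paper applies the identical limiting argument directly to an arbitrary generalized approximating sequence and concludes LP well-posedness without the detour through Theorem~\ref{Thm13}; the two are interchangeable.
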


\begin{proof}
The necessity part is clear from Theorem \ref{Thm13}. So, we only need to prove the sufficiency part. To this end, suppose that condition \eqref{ineq2} holds.
Since $S_p\subset S_p(\delta,\epsilon)$ for any $\delta, \epsilon>0$, the \eqref{PSMVI} has at most one solution for each $p\in X$.

Let $\{p_n\}$ be any sequence in $X$ such that $p_n\rightarrow p\in X$ as $n\rightarrow\infty$. Let $\{(z_n,w_n)\}\subset\mathscr{H}_1\times\mathscr{H}_2$ be any generalized approximating sequence for the \eqref{PSMVI} corresponding to $\{p_n\}$. Then there exist a sequence $\{(u_n,v_n)\}\subset\mathscr{H}_1\times\mathscr{H}_2$ with $u_n\in\mathscr{B}_1(z_n,p_n)$, $v_n\in\mathscr{B}_2(w_n,p_n)$ and a positive sequence $\{\epsilon_n\}$ of real numbers with $\epsilon_n\rightarrow 0$ such that
\begin{equation}\label{AS2}
\begin{cases}
d(z_n,C)\leq\epsilon_n, d(w_n,Q)\leq\epsilon_n, \|w_n-Az_n\|\leq \epsilon_n;\\
\left\langle u_n, z_n-x\right\rangle+\varphi(z_n,p_n)-\varphi(x,p_n)\leq\epsilon_n \; \; \forall x\in C;\\
\left\langle v_n, w_n-y\right\rangle+\psi(w_n,p_n)-\psi(y,p_n)\leq\epsilon_n \; \; \forall y\in Q.
\end{cases}
\end{equation}

Let $\delta_n=\|p_n-p\|$. Then $p_n\in\mathbb{B}(p,\delta_n)$ and $\delta_n\rightarrow 0$ as $n\rightarrow\infty$. Thus $(z_n,w_n)\in S_p(\delta_n, \epsilon_n)$. Since diam$(S_p(\delta,\epsilon))\rightarrow 0 \text{ as } (\delta,\epsilon)\rightarrow 0$, $\{(z_n,w_n)\}$ is a Cauchy sequence. Therefore, the sequence $\{(z_n,w_n)\}$ converges strongly to a point $(\tilde{z},\tilde{w})\in\mathscr{H}_1\times\mathscr{H}_2$.

Since $\mathscr{B}_1(z_n,p_n)$ and $\mathscr{B}_1(\tilde{z},p)$ are nonempty weakly compact, $\mathscr{B}_1(z_n,p_n)$, $\mathscr{B}_1(\tilde{z},p)\in CB(\mathscr{H}_1)$ for all $n\in\mathbb{N}$. Therefore, by Theorem \ref{Nadler Theorem}, for each $u_n\in\mathscr{B}_1(z_n,p_n)$, there exists $\tilde{v}_n\in\mathscr{B}_1(\tilde{z},p)$ such that
\begin{align*}
\|u_n-\tilde{v}_n\|\leq (1+1/n) H(\mathscr{B}_1(z_n,p_n),\mathscr{B}_1(\tilde{z},p))\rightarrow 0 \text{ as } n\rightarrow\infty.
\end{align*}

This implies that $u_n-\tilde{v}_n\rightarrow 0$ as $ n\rightarrow\infty$. Since $\mathscr{B}_1(\tilde{z},p)$ is weakly compact, without loss of generality, we may assume that $\tilde{v}_n\rightharpoonup\tilde{u}\in\mathscr{B}_1(\tilde{z},p)$. This implies that  $u_n\rightharpoonup\tilde{u}\in\mathscr{B}_1(\tilde{z},p)$ as $ n\rightarrow\infty$. Similarly, using the assumptions on $\mathscr{B}_2$, we can find $\tilde{v}$ such that $v_n\rightharpoonup\tilde{v}\in\mathscr{B}_2(\tilde{w},p)$.

Taking the limit as $n\rightarrow\infty$ in \eqref{AS2} and using our assumptions, we obtain
\begin{equation*}
\begin{cases}
\tilde{z}\in C, \tilde{w}\in Q, \tilde{w}=A\tilde{z};\\
\left\langle\tilde{u}, \tilde{z}-x\right\rangle+\varphi(\tilde{z},p)-\varphi(x,p)\leq 0 \; \; \forall x\in C;\\
\left\langle\tilde{v}, \tilde{w}-y\right\rangle+\psi(\tilde{w},p)-\psi(y,p)\leq 0 \; \; \forall y\in Q.
\end{cases}
\end{equation*}

This implies that $(\tilde{z},\tilde{w})\in S_p$ and every generalized approximate sequence for the \eqref{PSMVI} corresponding to $\{p_n\}$ converges to $(\tilde{z},\tilde{w})$.  This completes the proof.

\end{proof}

\begin{theorem}\label{Thm15}
Let $\mathscr{H}_1$ and $\mathscr{H}_2$ be two real Hilbert spaces, and let $C$ and $Q$ be two nonempty, closed and convex subsets of $\mathscr{H}_1$ and $\mathscr{H}_2$, respectively. Let $X$ be a Banach space, and let $\varphi:\mathscr{H}_1\times X\rightarrow\mathbb{R}$ and $\psi:\mathscr{H}_2\times X\rightarrow\mathbb{R}$ be two functions. Let $\mathscr{B}_1:\mathscr{H}_1\times X\rightarrow 2^{\mathscr{H}_1}$ and $\mathscr{B}_2:\mathscr{H}_2\times X\rightarrow 2^{\mathscr{H}_2}$ be two strict multivalued mappings, and let $A:\mathscr{H}_1\rightarrow\mathscr{H}_2$ be a bounded linear operator. Then the $\mathcal{PSMVIP}$ is LP well-posed in the generalized sense if and only if for each $p\in X$, the solution set $S_p$ of the \eqref{PSMVI} is nonempty compact and
\begin{align}\label{ineq11}
H(S_p(\delta,\epsilon), S_p)\rightarrow 0 \text{ as } (\delta,\epsilon)\rightarrow 0.
\end{align}
\end{theorem}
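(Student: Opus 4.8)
The plan is to prove the two implications separately, following the template of Theorem~\ref{Thm13} but in the subsequential (generalized) sense. The key preliminary observation is that the right-hand side of the asserted equivalence already hands us \emph{compactness} of $S_p$, so no appeal to a Kuratowski-type result on nested sets (as in Theorem~\ref{Thm16:2}) will be needed, and in fact no limit is ever taken inside the variational inequalities; this is precisely why no regularity of $\varphi,\psi$ and no weak-compactness or $H$-continuity of $\mathscr{B}_1,\mathscr{B}_2$ is hypothesised here.

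For necessity, I would fix $p\in X$; then $S_p\neq\emptyset$ by definition. To obtain compactness of $S_p$, take any $\{(z_n,w_n)\}\subset S_p$, set $p_n:=p$ and $\epsilon_n:=1/n$, and note that each $(z_n,w_n)$ trivially satisfies the relations \eqref{GPSMVI} for these data (since $d(z_n,C)=d(w_n,Q)=0$ and $\|w_n-Az_n\|=0$), so $\{(z_n,w_n)\}$ is a generalized approximating sequence for \eqref{PSMVI} corresponding to $\{p_n\}$; LP well-posedness in the generalized sense then yields a subsequence converging into $S_p$, whence $S_p$ is sequentially compact, hence compact. For the Hausdorff limit, I would use Remark~\ref{rmk4} (so that $S_p\subset S_p(\delta,\epsilon)$ and hence $H(S_p(\delta,\epsilon),S_p)=D(S_p(\delta,\epsilon),S_p)$) and argue by contradiction: if $H(S_p(\delta,\epsilon),S_p)\nrightarrow 0$ as $(\delta,\epsilon)\rightarrow(0,0)$, there would be $\tau>0$, $0<\delta_n\rightarrow 0$, $0<\epsilon_n\rightarrow 0$ and $(z_n,w_n)\in S_p(\delta_n,\epsilon_n)$ with $d((z_n,w_n),S_p)>\tau$; unwinding membership in $S_p(\delta_n,\epsilon_n)$ produces $q_n\in\mathbb{B}(p,\delta_n)$ (so $q_n\rightarrow p$) together with selections $u_n\in\mathscr{B}_1(z_n,q_n)$, $v_n\in\mathscr{B}_2(w_n,q_n)$, exhibiting $\{(z_n,w_n)\}$ as a generalized approximating sequence corresponding to $\{q_n\}$, and the hypothesis again forces a subsequence into $S_p$, contradicting $d((z_n,w_n),S_p)>\tau$ for all $n$.

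For sufficiency, assume for each $p$ that $S_p$ is nonempty and compact and that $H(S_p(\delta,\epsilon),S_p)\rightarrow 0$. Fix $p$, a sequence $p_n\rightarrow p$, and a generalized approximating sequence $\{(z_n,w_n)\}$ for \eqref{PSMVI} corresponding to $\{p_n\}$, with the associated selections and $\epsilon_n\rightarrow 0$ as in \eqref{GPSMVI}. Setting $\delta_n:=\|p_n-p\|$ gives $p_n\in\mathbb{B}(p,\delta_n)$ and $\delta_n\rightarrow 0$, so $(z_n,w_n)\in S_p(\delta_n,\epsilon_n)$ and therefore
\[
d\bigl((z_n,w_n),S_p\bigr)\le D\bigl(S_p(\delta_n,\epsilon_n),S_p\bigr)=H\bigl(S_p(\delta_n,\epsilon_n),S_p\bigr)\longrightarrow 0 .
\]
Since $S_p$ is compact, choosing nearest points $(\bar z_n,\bar w_n)\in S_p$, extracting a convergent subsequence of $\{(\bar z_n,\bar w_n)\}$, and applying the triangle inequality shows that $\{(z_n,w_n)\}$ has a subsequence converging to a point of $S_p$; as $S_p\neq\emptyset$ for every $p$, this is exactly LP well-posedness in the generalized sense.

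The only delicate point, and the one I would state carefully, is in the necessity part: one must recognise that the auxiliary parameters $q_n\in\mathbb{B}(p,\delta_n)$ supplied by membership in $S_p(\delta_n,\epsilon_n)$ form a sequence converging to $p$, so that the sequence at hand genuinely qualifies as a generalized approximating sequence corresponding to $\{q_n\}$ in the sense of the definition. Once this identification is in place, both directions are routine bookkeeping, with the compactness of $S_p$ carrying all the analytic content of the converse.
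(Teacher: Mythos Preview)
Your proposal is correct and follows essentially the same approach as the paper's proof: both directions proceed exactly as you outline, with compactness of $S_p$ obtained by viewing any sequence in $S_p$ as a generalized approximating sequence for the constant parameter sequence, the Hausdorff convergence proved by contradiction via the auxiliary parameters $q_n\in\mathbb{B}(p,\delta_n)$, and the converse handled by setting $\delta_n=\|p_n-p\|$ and using compactness of $S_p$. Your added remarks (why $H=D$ via Remark~\ref{rmk4}, and why no regularity on $\varphi,\psi,\mathscr{B}_1,\mathscr{B}_2$ is needed since no limit is passed inside the inequalities) are accurate and clarifying, but the argument itself is the same as the paper's.
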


\begin{proof}
Suppose that $\mathcal{PSMVIP}$ is LP well-posed in the generalized sense. Then $S_p$ is nonempty for each $p\in X$. To show the compactness of $S_p$, fix $p\in X$ and let $\{(z_n,w_n)\}\subset S_p$. Then it is obvious that $\{(z_n,w_n)\}$ is a generalized approximating sequence for the \eqref{PSMVI} corresponding to $\{p_n\}=\{p\}$.
Since the $\mathcal{PSMVIP}$ is LP well-posed in the generalized sense, the sequence $\{(z_n,w_n)\}$ has a subsequence which converges strongly to a point in $S_p$.
Hence $S_p$ is compact for each $p\in X$, as claimed.

Now, we claim that

\begin{align}
H(S_p(\delta, \epsilon), S_p)\rightarrow 0 \text{ as } (\delta,\epsilon)\rightarrow 0.\nonumber
\end{align}

Suppose to the contrary that this does not hold.
Then there exists a number $\tau>0$, positive sequences $\{\delta_n\}$ and $\{\epsilon_n\}$ with $\delta_n\rightarrow 0$, $\epsilon_n\rightarrow 0$ as $n\rightarrow\infty$, and a sequence $\{(z_n,w_n)\}$ such that $(z_n,w_n) \in S_p(\delta_n,\epsilon_n)$ for each natural number $n$ and
\begin{align}\label{Thm2:con2}
(z_n,w_n)\notin S_p+\mathbb{B}(0,\tau) \; \; \forall n\in\mathbb{N}.
\end{align}

Since $(z_n,w_n) \in S_p(\delta_n,\epsilon_n)$, there exist sequences $\{q_n\}\subset\mathbb{B}(p,\delta_n)$ and $\{(u_n,v_n)\}\subset\mathscr{H}_1\times\mathscr{H}_2$ with $u_n\in\mathscr{B}_1(z_n,q_n)$, $v_n\in\mathscr{B}_2(w_n,q_n)$ such that
\begin{equation*}
\begin{cases}
d(z_n,C)\leq\epsilon_n, d(w_n,Q)\leq\epsilon_n, \|w_n-Az_n\|\leq \epsilon_n;\\
\left\langle u_n, z_n-x\right\rangle+\varphi(z_n,q_n)-\varphi(x,q_n)\leq\epsilon_n \; \; \forall x\in C;\\
\left\langle v_n, w_n-y\right\rangle+\psi(w_n,q_n)-\psi(y,q_n)\leq\epsilon_n \; \; \forall y\in Q.
\end{cases}
\end{equation*}

Therefore, it is evident that $\{(z_n,w_n)\}$ is a generalized approximating sequence for the \eqref{PSMVI} corresponding to $\{q_n\}$. Since $\mathcal{PSMVIP}$ is LP well-posed in the generalized sense, the sequence $\{(z_n,w_n)\}$ has a subsequence which converges to a point in $S_p$, which contradicts \eqref{Thm2:con2}.

Conversely, for each $p\in X$, the solution set $S_p$ of the \eqref{PSMVI} is nonempty and compact, and \eqref{ineq11} holds.
Let $\{p_n\}$ be any sequence in $X$ such that $p_n\rightarrow p\in X$ as $n\rightarrow\infty$.
Let $\{(z_n,w_n)\}\subset\mathscr{H}_1\times\mathscr{H}_2$ be any generalized approximating sequence for the \eqref{PSMVI} corresponding to $\{p_n\}$. Then there exist a sequence $\{(u_n,v_n)\}\subset\mathscr{H}_1\times\mathscr{H}_2$ with $u_n\in\mathscr{B}_1(z_n,p_n)$, $v_n\in\mathscr{B}_2(w_n,p_n)$ and a positive sequence $\{\epsilon_n\}$ of real numbers with $\epsilon_n\rightarrow 0$ such that
\begin{equation*}
\begin{cases}
d(z_n,C)\leq\epsilon_n, d(w_n,Q)\leq\epsilon_n, \|w_n-Az_n\|\leq \epsilon_n;\\
\left\langle u_n, z_n-x\right\rangle+\varphi(z_n,p_n)-\varphi(x,p_n)\leq\epsilon_n \; \; \forall x\in C;\\
\left\langle v_n, w_n-y\right\rangle+\psi(w_n,p_n)-\psi(y,p_n)\leq\epsilon_n \; \; \forall y\in Q.
\end{cases}
\end{equation*}
Let $\delta_n=\|p_n-p\|$. Then $p_n\in\mathbb{B}(p,\delta_n)$ and $\delta_n\rightarrow 0$ as $n\rightarrow\infty$. Thus $(z_n,w_n)\in S_p(\delta_n, \epsilon_n)$.

Using condition (\ref{ineq11}), we get
\begin{align*}
d((z_n,w_n), S_p)&\leq D(S_p(\delta_n,\epsilon_n),S_p)\\&
=\max\{D(S_p(\delta_n,\epsilon_n),S_p), D(S_p,S_p(\delta_n,\epsilon_n))\}\\&
=H(S_p(\delta_n,\epsilon_n),S_p)\rightarrow 0 \text{ as } n\rightarrow\infty.
\end{align*}

Since $S_p$ is compact, the sequence $\{(z_n, w_n)\}$ has a subsequence which converges to some element of $S_p$.
Hence $\mathcal{PSMVIP}$ is LP well-posed in the generalized sense.
\end{proof}

\begin{theorem}\label{Thm16}
Let $\mathscr{H}_1$ and $\mathscr{H}_2$ be two real Hilbert spaces, and let $C$ and $Q$ be two nonempty, closed and convex subsets of $\mathscr{H}_1$ and $\mathscr{H}_2$, respectively. Let $X$ be a finite-dimensional Banach space, and let $\varphi:\mathscr{H}_1\times X\rightarrow\mathbb{R}$ and $\psi:\mathscr{H}_2\times X\rightarrow\mathbb{R}$ be two continuous functions. Let $\mathscr{B}_1:\mathscr{H}_1\times X\rightarrow 2^{\mathscr{H}_1}$ and $\mathscr{B}_2:\mathscr{H}_2\times X\rightarrow 2^{\mathscr{H}_2}$ be two strict weakly compact-valued and $H$-continuous mappings, and let $A:\mathscr{H}_1\rightarrow\mathscr{H}_2$ be a bounded linear operator.
Then the $\mathcal{PSMVIP}$ is LP well-posed in the generalized sense if and only if for each $p\in X$, we have
\begin{align}\label{ineq13}
S_p(\delta,\epsilon)\neq\emptyset \; \; \forall\delta,\epsilon>0 \text{ and } \mu(S_p(\delta,\epsilon))\rightarrow 0 \text{ as } (\delta,\epsilon)\rightarrow 0.
\end{align}
\end{theorem}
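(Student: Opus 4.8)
The plan is to mirror the proof of Theorem~\ref{Thm16:2}, incorporating the parametric machinery already developed in Theorems~\ref{Thm11}, \ref{Thm12} and \ref{Thm15}. The two implications split in the usual way: necessity relies on Theorem~\ref{Thm15} together with the noncompactness estimate from Section~\ref{Sec:2}, while sufficiency relies on closedness of the approximate solution sets (Theorem~\ref{Thm11}), their nestedness (Remark~\ref{rmk4}(2)), the identity $S_p=\bigcap_{\delta,\epsilon>0}S_p(\delta,\epsilon)$ (Theorem~\ref{Thm12}), and a Kuratowski-type theorem.

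\emph{Necessity.} Suppose the $\mathcal{PSMVIP}$ is LP well-posed in the generalized sense and fix $p\in X$. As in the proof of Theorem~\ref{Thm15}, any sequence in $S_p$ is a generalized approximating sequence for \eqref{PSMVI} corresponding to the constant sequence $\{p\}$, so extracting a convergent subsequence shows that $S_p$ is nonempty and compact; in particular $\mu(S_p)=0$, and since $S_p\subset S_p(\delta,\epsilon)$ we obtain $S_p(\delta,\epsilon)\neq\emptyset$ for all $\delta,\epsilon>0$. By Theorem~\ref{Thm15} we also have $H(S_p(\delta,\epsilon),S_p)\rightarrow 0$ as $(\delta,\epsilon)\rightarrow 0$. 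Applying the inequality $\mu(A)\leq 2H(A,B)+\mu(B)$ (the first lemma of Section~\ref{Sec:2}) with $A=S_p(\delta,\epsilon)$, $B=S_p$ and using $\mu(S_p)=0$ gives $\mu(S_p(\delta,\epsilon))\leq 2H(S_p(\delta,\epsilon),S_p)\rightarrow 0$, which is exactly \eqref{ineq13}.

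\emph{Sufficiency.} Fix $p\in X$ and assume \eqref{ineq13}. Since $X$ is finite-dimensional and $\mathscr{B}_1,\mathscr{B}_2$ are $H$-continuous and weakly compact-valued, Theorem~\ref{Thm11} guarantees that each $S_p(\delta,\epsilon)$ is closed; by Remark~\ref{rmk4}(2) the family $\{S_p(\delta,\epsilon)\}$ is nested, and by Theorem~\ref{Thm12} we have $S_p=\bigcap_{\delta,\epsilon>0}S_p(\delta,\epsilon)$. Passing to the decreasing sequence of nonempty closed sets $\{S_p(1/n,1/n)\}$, whose Kuratowski measures tend to $0$ by \eqref{ineq13} and whose intersection equals $S_p$, the theorem of Kuratowski (Theorem~1 in \cite{CHOR1985}; see also \cite[p.~412]{KKUR1968}), used exactly as in Theorem~\ref{Thm16:2}, shows that $S_p$ is nonempty and compact and that $H(S_p(\delta,\epsilon),S_p)\rightarrow 0$ as $(\delta,\epsilon)\rightarrow 0$. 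To finish, let $\{p_n\}\subset X$ with $p_n\rightarrow p$ and let $\{(z_n,w_n)\}$ be any generalized approximating sequence for \eqref{PSMVI} corresponding to $\{p_n\}$, with associated $\{\epsilon_n\}$, $\epsilon_n\rightarrow 0$; setting $\delta_n=\|p_n-p\|$ we have $p_n\in\mathbb{B}(p,\delta_n)$, $\delta_n\rightarrow 0$, hence $(z_n,w_n)\in S_p(\delta_n,\epsilon_n)$ and $d((z_n,w_n),S_p)\leq D(S_p(\delta_n,\epsilon_n),S_p)=H(S_p(\delta_n,\epsilon_n),S_p)\rightarrow 0$. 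Compactness of $S_p$ then yields a subsequence converging to a point of $S_p$, so the $\mathcal{PSMVIP}$ is LP well-posed in the generalized sense.

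\emph{Main obstacle.} I expect the only genuinely delicate point to be the reduction of the two-parameter family $\{S_p(\delta,\epsilon)\}$ to the diagonal sequence $\{S_p(1/n,1/n)\}$ and the correct invocation of the Kuratowski-type theorem: one must be sure that closedness of the approximate solution sets actually holds — this is precisely where the finite-dimensionality of $X$ enters, via Theorem~\ref{Thm11} — and that the intersection over this single sequence still coincides with $S_p$, which is supplied by Theorem~\ref{Thm12}, together with monotonicity of $D(S_p(\delta,\epsilon),S_p)$ in $(\delta,\epsilon)$ to pass from the diagonal back to arbitrary $(\delta,\epsilon)\rightarrow 0$. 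Everything else is a routine adaptation of the arguments in Theorems~\ref{Thm15} and \ref{Thm16:2}.
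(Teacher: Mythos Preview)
Your proposal is correct and follows essentially the same route as the paper: necessity via Theorem~\ref{Thm15} plus the estimate $\mu(S_p(\delta,\epsilon))\leq 2H(S_p(\delta,\epsilon),S_p)+\mu(S_p)$, and sufficiency via Theorems~\ref{Thm11} and~\ref{Thm12} together with the Kuratowski-type result from \cite{CHOR1985}. The only cosmetic difference is that the paper simply cites Theorem~\ref{Thm15} at the end of the sufficiency argument rather than re-deriving the subsequence extraction, and it does not spell out the diagonal reduction to $\{S_p(1/n,1/n)\}$ that you (rightly) flag as the delicate point.
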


\begin{proof}
Suppose $\mathcal{PSMVIP}$ is LP well-posed in the generalized sense. Then by Theorem \ref{Thm15}, the set $S_p$ is nonempty and compact for each $p\in X$, and we have
\begin{align*}
H(S_p(\delta,\epsilon),S_p)\rightarrow 0 \text{ as } (\delta,\epsilon)\rightarrow 0.
\end{align*}

Since $S_p\subset S_p(\delta, \epsilon)$ for any $\delta>0$, $\epsilon>0$, the set $S_p(\delta, \epsilon)$ is nonempty for each $p\in X$ and for all $\delta>0$, $\epsilon>0$.

Now we claim that $\mu(S_p(\delta,\epsilon))\rightarrow 0 \text{ as } (\delta,\epsilon)\rightarrow 0$. Indeed, for any $\delta,\epsilon>0$, we have
\begin{align*}
 \mu(S_p(\delta,\epsilon))\leq 2 H(S_p(\delta,\epsilon),S_p)+\mu(S_p)=2 H(S_p(\delta,\epsilon),S_p)\rightarrow 0 \text{ as } (\delta,\epsilon)\rightarrow 0.
\end{align*}

Conversely, suppose that \eqref{ineq13} holds for every $p \in X$.

By Theorem \ref{Thm11} and our assumptions, it is clear that $S_p(\delta,\epsilon)$ is closed for any $\delta,\epsilon>0$.
Also, we observe that (see, Theorem \ref{Thm12})
\begin{align*}
S_p=\bigcap_{\delta>0,\epsilon>0}S_p(\delta,\epsilon)
\end{align*}
for each $p\in X$.

Since $\mu(S_p(\delta,\epsilon))\rightarrow 0 \text{ as } (\delta,\epsilon)\rightarrow 0$ for each $p\in X$,
using Theorem 1 in \cite{CHOR1985} (see also \cite[p. 412]{KKUR1968}), one can show that for each $p\in X$, the set $S_p$ is nonempty and compact, and we have
\begin{align*}
H(S_p(\delta,\epsilon),S_p)\rightarrow 0\text{ as }(\delta,\epsilon)\rightarrow 0.
\end{align*}
Therefore, in view of Theorem \ref{Thm15}, the $\mathcal{PSMVIP}$ is LP well-posed in the generalized sense.
This completes the proof.
\end{proof}

\begin{theorem}\label{Thm17}
Let $\mathscr{H}_1$ and $\mathscr{H}_2$ be two finite-dimensional real Hilbert spaces, and let $C$ and $Q$ be two nonempty, closed and convex subsets of $\mathscr{H}_1$ and $\mathscr{H}_2$, respectively. Let $X$ be a Banach space, and let $\varphi:\mathscr{H}_1\times X\rightarrow\mathbb{R}$ and $\psi:\mathscr{H}_2\times X\rightarrow\mathbb{R}$ be two continuous functions. Let $\mathscr{B}_1:\mathscr{H}_1\times X\rightarrow 2^{\mathscr{H}_1}$ and $\mathscr{B}_2:\mathscr{H}_2\times X\rightarrow 2^{\mathscr{H}_2}$ be two strict compact-valued and $H$-continuous mappings, and let $A:\mathscr{H}_1\rightarrow\mathscr{H}_2$ be a bounded linear operator. Suppose that for each $p\in X$, there exists $\epsilon>0$ such that $S_p(\epsilon,\epsilon)$ is nonempty and bounded. Then the $\mathcal{PSMVIP}$ is LP well-posed if and only if for each $p\in X$, the \eqref{PSMVI} has a unique solution.
\end{theorem}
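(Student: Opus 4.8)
The plan is to mirror, in the parametric setting, the argument used for Theorem \ref{Thm17:2}. The necessity is immediate from the definition of LP well-posedness, so I would only treat sufficiency: assuming that $S_p$ is a singleton for every $p\in X$, I want to show that for each such $p$, each $\{p_n\}\subset X$ with $p_n\to p$, and each generalized approximating sequence $\{(z_n,w_n)\}$ for \eqref{PSMVI} corresponding to $\{p_n\}$, the whole sequence $\{(z_n,w_n)\}$ converges to the unique element of $S_p$.

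First I would establish boundedness of $\{(z_n,w_n)\}$. Writing $\delta_n:=\|p_n-p\|\to 0$ and letting $\{\epsilon_n\}$ with $\epsilon_n\to 0$ be the scalars attached to the generalized approximating sequence, one has $p_n\in\mathbb{B}(p,\delta_n)$, hence $(z_n,w_n)\in S_p(\delta_n,\epsilon_n)$. By the monotonicity of the approximate solution sets recorded in Remark \ref{rmk4}, for all large $n$ this set is contained in $S_p(\epsilon,\epsilon)$, the nonempty bounded set provided by the hypothesis; thus $\{(z_n,w_n)\}$ is bounded. Since $\mathscr{H}_1\times\mathscr{H}_2$ is finite-dimensional, I would then argue by subsequences: it suffices to prove that every convergent subsequence of $\{(z_n,w_n)\}$ has the unique point of $S_p$ as its limit, for then the bounded sequence itself must converge there.

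So I would fix a subsequence with $(z_{n_k},w_{n_k})\to(\tilde z,\tilde w)$. Membership in $S_p(\delta_{n_k},\epsilon_{n_k})$ furnishes parameters $q_{n_k}\in\mathbb{B}(p,\delta_{n_k})$ (so $q_{n_k}\to p$ since $\delta_{n_k}\to 0$) and selections $u_{n_k}\in\mathscr{B}_1(z_{n_k},q_{n_k})$, $v_{n_k}\in\mathscr{B}_2(w_{n_k},q_{n_k})$ satisfying the approximate inequalities. The key step is extracting a limit point of the selections: applying Nadler's theorem (Theorem \ref{Nadler Theorem}) to the compact sets $\mathscr{B}_1(z_{n_k},q_{n_k})$ and $\mathscr{B}_1(\tilde z,p)$, I would obtain $\tilde v_{n_k}\in\mathscr{B}_1(\tilde z,p)$ with $\|u_{n_k}-\tilde v_{n_k}\|\le(1+1/k)\,H\big(\mathscr{B}_1(z_{n_k},q_{n_k}),\mathscr{B}_1(\tilde z,p)\big)$; since $(z_{n_k},q_{n_k})\to(\tilde z,p)$ and $\mathscr{B}_1$ is $H$-continuous on $\mathscr{H}_1\times X$, the right-hand side vanishes, so $u_{n_k}-\tilde v_{n_k}\to 0$, and compactness of $\mathscr{B}_1(\tilde z,p)$ lets me pass to a further subsequence with $\tilde v_{n_k}\to\tilde u\in\mathscr{B}_1(\tilde z,p)$, hence $u_{n_k}\to\tilde u$. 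The same reasoning for $\mathscr{B}_2$ gives $v_{n_k}\to\tilde v\in\mathscr{B}_2(\tilde w,p)$ along a further subsequence. Finally, letting $k\to\infty$ in the three defining conditions of $S_p(\delta_{n_k},\epsilon_{n_k})$ — using continuity of $d(\cdot,C)$ and $d(\cdot,Q)$ (Lemma \ref{Lem:2}) together with closedness of $C$ and $Q$ for the feasibility part, continuity of the bounded linear $A$ for $\tilde w=A\tilde z$, and the strong convergences $u_{n_k}\to\tilde u$, $z_{n_k}\to\tilde z$, $v_{n_k}\to\tilde v$, $w_{n_k}\to\tilde w$, $q_{n_k}\to p$ together with the continuity of $\varphi,\psi$ and $\epsilon_{n_k}\to 0$ for the two inequalities — I would conclude $(\tilde z,\tilde w)\in S_p$, so $(\tilde z,\tilde w)$ coincides with the unique solution, which completes the argument.

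The main obstacle is the simultaneous passage to the limit in both the spatial variable and the perturbation parameter. This is what forces me to use $H$-continuity of $\mathscr{B}_1$ and $\mathscr{B}_2$ in the \emph{product} variable $(z,q)\in\mathscr{H}_i\times X$, so that $H\big(\mathscr{B}_1(z_{n_k},q_{n_k}),\mathscr{B}_1(\tilde z,p)\big)\to 0$, and the joint continuity of $\varphi$ and $\psi$ on $\mathscr{H}_i\times X$, so that terms like $\varphi(z_{n_k},q_{n_k})$ and $\varphi(x,q_{n_k})$ converge to $\varphi(\tilde z,p)$ and $\varphi(x,p)$ respectively. Once these ingredients are in hand, the remainder is a routine adaptation of the proof of Theorem \ref{Thm17:2}, including the standard device that a bounded sequence in finite dimensions all of whose convergent subsequences share the same limit is itself convergent.
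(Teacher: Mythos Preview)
Your proposal is correct and follows essentially the same route as the paper's proof: trivial necessity, boundedness of a generalized approximating sequence via the inclusion $S_p(\delta_n,\epsilon_n)\subset S_p(\epsilon,\epsilon)$ for large $n$, extraction of a convergent subsequence by finite-dimensionality, Nadler's theorem together with $H$-continuity and compact values to get convergent selections $u_{n_k}\to\tilde u\in\mathscr{B}_1(\tilde z,p)$ and $v_{n_k}\to\tilde v\in\mathscr{B}_2(\tilde w,p)$, and then passing to the limit in all three conditions to land in $S_p$. The only cosmetic difference is that you write the parameter along the subsequence as a generic $q_{n_k}\in\mathbb{B}(p,\delta_{n_k})$ furnished by membership in $S_p(\delta_{n_k},\epsilon_{n_k})$, whereas the paper simply keeps $p_{n_k}$; since $q_{n_k}\to p$ either way, the argument is unaffected.
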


\begin{proof}
Since the necessity part of the theorem is obvious, we only need to prove the sufficiency part. Suppose that, for each $p\in X$, the \eqref{PSMVI} has a unique solution,
say $(x^*,y^*)$. Let $\{p_n\}$ be any sequence in $X$ such that $p_n\rightarrow p\in X$ as $n\rightarrow\infty$. Let $\{(z_n, w_n)\}$ be a generalized approximating sequence for the \eqref{PSMVI} corresponding to $\{p_n\}$. Then there exist a sequence $\{(u_n,v_n)\}\subset\mathscr{H}_1\times\mathscr{H}_2$
with $u_n\in\mathscr{B}_1(z_n,p_n)$, $v_n\in\mathscr{B}_2(w_n,p_n)$ and a positive sequence $\{\epsilon_n\}$ of real numbers with $\epsilon_n\rightarrow 0$ such that
\begin{equation*}
\begin{cases}
d(z_n,C)\leq\epsilon_n, d(w_n,Q)\leq\epsilon_n, \|w_n-Az_n\|\leq \epsilon_n;\\
\left\langle u_n, z_n-x\right\rangle+\varphi(z_n,p_n)-\varphi(x,p_n)\leq\epsilon_n \; \; \forall x\in C;\\
\left\langle v_n, w_n-y\right\rangle+\psi(w_n,p_n)-\psi(y,p_n)\leq\epsilon_n \; \; \forall y\in Q.
\end{cases}
\end{equation*}
Let $\delta_n=\|p_n-p\|$. Then $p_n\in\mathbb{B}(p,\delta_n)$ and $\delta_n\rightarrow 0$ as $n\rightarrow\infty$. Thus $(z_n,w_n)\in S_p(\delta_n, \epsilon_n)$.
Let $\epsilon>0$ be such that $S_p(\epsilon,\epsilon)$ is nonempty and bounded. Then there exists a natural number $n_0$ such that
\begin{align}
(z_n,w_n)\in S_p(\delta_n,\epsilon_n)\subset S_p(\epsilon,\epsilon) \; \; \forall n\geq n_0.\nonumber
\end{align}
This implies that the sequence $\{(z_n,w_n)\}$ is bounded. Let $\{(z_{n_k},w_{n_k})\}$ be any subsequence of $\{(z_n,w_n)\}$. Since $\{(z_n,w_n)\}$ is bounded, $\{(z_{n_k},w_{n_k})\}$ has a convergent subsequence. Without loss of generality, we may assume that $(z_{n_k},w_{n_k})\rightarrow (\tilde{z},\tilde{w})$ as $k\rightarrow\infty$. Also, it is clear that $(z_{n_k},w_{n_k})\in S_p(\delta_{n_k}, \epsilon_{n_k})$. Therefore, there exist a sequence $\{(u_{n_k},v_{n_k})\}\subset\mathscr{H}_1\times\mathscr{H}_2$ with $u_{n_k}\in\mathscr{B}_1(z_{n_k},p_{n_k})$, $v_{n_k}\in\mathscr{B}_2(w_{n_k},p_{n_k})$ and a positive sequence $\{\epsilon_{n_k}\}$ of real numbers with $\epsilon_{n_k}\rightarrow 0$ such that
\begin{equation}\label{AS6}
\begin{cases}
d(z_{n_k},C)\leq\epsilon_{n_k}, d(w_{n_k},Q)\leq\epsilon_{n_k}, \|w_{n_k}-Az_{n_k}\|\leq \epsilon_{n_k};\\
\left\langle u_{n_k}, z_{n_k}-x\right\rangle+\varphi(z_{n_k},p_{n_k})-\varphi(x,p_{n_k})\leq\epsilon_{n_k} \; \; \forall x\in C;\\
\left\langle v_{n_k}, w_{n_k}-y\right\rangle+\psi(w_{n_k},p_{n_k})-\psi(y,p_{n_k})\leq\epsilon_{n_k} \; \; \forall y\in Q.
\end{cases}
\end{equation}

Since $\mathscr{B}_1(z_{n_k},p_{n_k})$ and $\mathscr{B}_1(\tilde{z},p)$ are nonempty compact, $\mathscr{B}_1(z_{n_k},p_{n_k})$, $\mathscr{B}_1(\tilde{z},p)\in CB(\mathscr{H}_1)$ for all $k\in\mathbb{N}$. Therefore, by Theorem \ref{Nadler Theorem}, for each $u_{n_k}\in\mathscr{B}_1(z_{n_k},p_{n_k})$, there exists $\tilde{v}_{n_k}\in\mathscr{B}_1(\tilde{z},p)$ such that
\begin{align*}
\|u_{n_k}-\tilde{v}_{n_k}\|\leq (1+1/k) H(\mathscr{B}_1(z_{n_k},p_{n_k}),\mathscr{B}_1(\tilde{z},p))\rightarrow 0 \text{ as } k\rightarrow\infty.
\end{align*}

This implies that $u_{n_k}-\tilde{v}_{n_k}\rightarrow 0$ as $k\rightarrow\infty$. Since $\mathscr{B}_1(\tilde{z},p)$ is compact, without loss of generality, we may assume that $\tilde{v}_{n_k}\rightarrow\tilde{u}\in\mathscr{B}_1(\tilde{z},p)$. This implies that  $u_{n_k}\rightarrow\tilde{u}\in\mathscr{B}_1(\tilde{z},p)$ as $ k\rightarrow\infty$. Similarly, using the assumptions on $\mathscr{B}_2$ we can find $\tilde{v}$ such that $v_{n_k}\rightarrow\tilde{v}\in\mathscr{B}_2(\tilde{w},p)$.

Taking the limit as $k\rightarrow\infty$ in \eqref{AS6} and using our assumptions, we obtain
\begin{equation*}
\begin{cases}
\tilde{z}\in C, \tilde{w}\in Q, \tilde{w}=A\tilde{z};\\
\left\langle\tilde{u}, \tilde{z}-x\right\rangle+\varphi(\tilde{z},p)-\varphi(x,p)\leq 0 \; \; \forall x\in C;\\
\left\langle\tilde{v}, \tilde{w}-y\right\rangle+\psi(\tilde{w},p)-\psi(y,p)\leq 0 \; \; \forall y\in Q.
\end{cases}
\end{equation*}

This implies that $(\tilde{z},\tilde{w})\in S_p$. Since for each $p\in X$, the \eqref{PSMVI} has a unique solution, $(x^*,y^*)=(\tilde{z},\tilde{w})$.
Therefore, every generalized approximate sequence for the \eqref{PSMVI} corresponding to $\{p_n\}$ converges to $(\tilde{z},\tilde{w})$.
Consequently, the $\mathcal{PSMVIP}$ is LP well-posed and the proof is complete.
\end{proof}

\begin{example}\label{Example:3}
Let $\mathscr{H}_1=\mathbb{R}=\mathscr{H}_2$, and let $C=[0,1]=Q$ and $X=\mathbb{R}$. Let $A:\mathbb{R}\rightarrow\mathbb{R}$ be the bounded linear operator defined by $A(x)=x$. We define two functions, $\varphi:\mathbb{R}^2\rightarrow\mathbb{R}$ and $\psi:\mathbb{R}^2\rightarrow\mathbb{R}$, by $\varphi(x,p)=x^4-p^4$ and $\psi(y,p)=y^2-p^2$. It is clear that both $\varphi$ and $\psi$ are continuous functions on $\mathbb{R}^2$. Define two strict multivalued mappings, $\mathscr{B}_1:\mathbb{R}^2\rightarrow 2^\mathbb{R}$ and $\mathscr{B}_2:\mathbb{R}^2\rightarrow 2^\mathbb{R}$, by $\mathscr{B}_1(x,p)=\{x-p,0\}$ and $\mathscr{B}_2(y,p)=\{y-p,0\}$. It is clear that $\mathscr{B}_1$ and $\mathscr{B}_2$ are compact-valued.

Now, we have
\begin{align*}
D(\mathscr{B}_1(x,p),\mathscr{B}_1(y,q))&=\max\{d(x-p,\mathscr{B}_1(y,q)),d(0,\mathscr{B}_1(y,q))\}=d(x-p,\mathscr{B}_1(y,q))\\&
=\min\{\vert x-p\vert,\vert x-y-p+q\vert\}\leq \vert x-y\vert+\vert p-q\vert.
\end{align*}

Since a similar computation holds for $D(\mathscr{B}_1(y,q),\mathscr{B}_1(x,p))$, it follows that
\begin{align*}
H(\mathscr{B}_1(x,p),\mathscr{B}_1(y,q))=\max\{D(\mathscr{B}_1(x,p),\mathscr{B}_1(y,q)),D(\mathscr{B}_1(y,q),\mathscr{B}_1(x,p))\}\leq \vert x-y\vert+\vert p-q\vert.
\end{align*}
This shows that $\mathscr{B}_1$ is $H$-continuous on $\mathbb{R}^2$.

Similarly, we can show that $\mathscr{B}_2$ is also $H$-continuous on $\mathbb{R}^2$.

In addition, it is not difficult to check that $S_p=\{(0,0)\}\subset\mathbb{R}^2$ is a singleton for each $p\in\mathbb{R}$.

Now, for each $p\in\mathbb{R}$ and $\delta, \epsilon> 0$,  we have
\begin{multline}
S_p(\delta,\epsilon) = \bigcup_{q\in[p-\delta,p+\delta]}\Big\{(z, w)\in\mathbb{R}^2: d(z,[0,1])\leq\epsilon, d(w,[0,1])\leq\epsilon, \vert w-z\vert\leq\epsilon; \nonumber\\
\text{ there exist } u\in\mathscr{B}_1(z,q), v\in\mathscr{B}_2(w,q) \text{ such that }\\
\left\langle u, z-x\right\rangle+z^4-x^4\leq\epsilon\;\; \forall x\in [0,1];\\
\left\langle v, w-y\right\rangle+w^2-y^2\leq\epsilon\;\;\forall y\in [0,1]\Big\}.
\end{multline}

One can check that $S_p(\delta,\epsilon)\subset [-\epsilon,1+\epsilon]\times[-\epsilon,1+\epsilon]$ for each $p\in\mathbb{R}$ and each $\delta,\epsilon>0$. Therefore, $S_p(\epsilon,\epsilon)$ is a bounded subset of $\mathbb{R}^2$ for each $p\in\mathbb{R}$. Also, for each $p\in\mathbb{R}$, $S_p\subset S_p(\delta,\epsilon)$ for any $\delta,\epsilon>0$ and consequently, $S_p(\epsilon,\epsilon)\neq\emptyset$. Therefore, the corresponding $\mathcal{PSMVIP}$ associated to these data is LP well-posed by Theorem \ref{Thm17}.
\end{example}

\begin{theorem}\label{Thm18}
Let $\mathscr{H}_1$ and $\mathscr{H}_2$ be two finite-dimensional real Hilbert spaces, and let $C$ and $Q$ be two nonempty, closed and convex subsets of $\mathscr{H}_1$ and $\mathscr{H}_2$, respectively. Let $X$ be a Banach space, and let $\varphi:\mathscr{H}_1\times X\rightarrow\mathbb{R}$ and $\psi:\mathscr{H}_2\times X\rightarrow\mathbb{R}$ be two continuous functions. Let $\mathscr{B}_1:\mathscr{H}_1\times X\rightarrow 2^{\mathscr{H}_1}$ and $\mathscr{B}_2:\mathscr{H}_2\times X\rightarrow 2^{\mathscr{H}_2}$ be two strict compact-valued and $H$-continuous mappings, and let $A:\mathscr{H}_1\rightarrow\mathscr{H}_2$ be a bounded linear operator. Suppose that for each $p\in X$, there exists $\epsilon>0$ such that $S_p(\epsilon,\epsilon)$ is nonempty and bounded. Then the $\mathcal{PSMVIP}$ is LP well-posed in the generalized sense if and only if for each $p\in X$, the \eqref{PSMVI} has a nonempty solution set $S_p$.
\end{theorem}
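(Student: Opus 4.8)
The plan is to adapt, to the parametric setting, the argument used for Theorem~\ref{Thm18:2}, proceeding along the lines of Theorem~\ref{Thm17}. The necessity is immediate: if the $\mathcal{PSMVIP}$ is LP well-posed in the generalized sense, then by definition $S_p$ is nonempty for every $p\in X$. So the whole content lies in the sufficiency direction, where we assume $S_p\neq\emptyset$ for each $p\in X$ and must verify the required subsequential convergence property.

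First I would fix $p\in X$, choose an arbitrary sequence $\{p_n\}\subset X$ with $p_n\to p$, and let $\{(z_n,w_n)\}$ be a generalized approximating sequence for \eqref{PSMVI} corresponding to $\{p_n\}$, with selections $u_n\in\mathscr{B}_1(z_n,p_n)$, $v_n\in\mathscr{B}_2(w_n,p_n)$ and a gauge $\epsilon_n\to 0$. Setting $\delta_n:=\|p_n-p\|$, we have $p_n\in\mathbb{B}(p,\delta_n)$, hence $(z_n,w_n)\in S_p(\delta_n,\epsilon_n)$. Now I would invoke the standing hypothesis: pick $\epsilon>0$ (depending on $p$) with $S_p(\epsilon,\epsilon)$ nonempty and bounded. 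Since $\delta_n\to 0$ and $\epsilon_n\to 0$, there is $n_0$ with $\delta_n\leq\epsilon$ and $\epsilon_n\leq\epsilon$ for all $n\geq n_0$, so by Remark~\ref{rmk4} we get $(z_n,w_n)\in S_p(\delta_n,\epsilon_n)\subset S_p(\epsilon,\epsilon)$ for $n\geq n_0$. Therefore $\{(z_n,w_n)\}$ is bounded, and since $\mathscr{H}_1\times\mathscr{H}_2$ is finite-dimensional it has a convergent subsequence $(z_{n_k},w_{n_k})\to(\tilde{z},\tilde{w})$.

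Then I would pass to the limit exactly as in Theorems~\ref{Thm17:2} and~\ref{Thm17}. Because $\mathscr{B}_1$ is compact-valued, $\mathscr{B}_1(z_{n_k},p_{n_k}),\mathscr{B}_1(\tilde{z},p)\in CB(\mathscr{H}_1)$, so Nadler's Theorem~\ref{Nadler Theorem} yields $\tilde{v}_{n_k}\in\mathscr{B}_1(\tilde{z},p)$ with $\|u_{n_k}-\tilde{v}_{n_k}\|\leq(1+1/k)\,H(\mathscr{B}_1(z_{n_k},p_{n_k}),\mathscr{B}_1(\tilde{z},p))$; this tends to $0$ by $H$-continuity of $\mathscr{B}_1$ together with $(z_{n_k},p_{n_k})\to(\tilde{z},p)$, and compactness of $\mathscr{B}_1(\tilde{z},p)$ then gives, along a further subsequence, $u_{n_k}\to\tilde{u}\in\mathscr{B}_1(\tilde{z},p)$; similarly $v_{n_k}\to\tilde{v}\in\mathscr{B}_2(\tilde{w},p)$. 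Letting $k\to\infty$ in the three defining relations of the generalized approximating sequence, and using continuity of $\varphi$ and $\psi$, closedness of $C$ and $Q$, and continuity of the bounded linear operator $A$, I conclude that $\tilde{z}\in C$, $\tilde{w}\in Q$, $\tilde{w}=A\tilde{z}$, and the two variational inequalities hold with multipliers $\tilde{u},\tilde{v}$; that is, $(\tilde{z},\tilde{w})\in S_p$. Hence every generalized approximating sequence corresponding to any $\{p_n\}\to p$ has a subsequence converging to a point of $S_p$, which is precisely LP well-posedness in the generalized sense. The only place requiring a little care is the nested subsequence extraction in this last step --- first for $(z_n,w_n)$ from boundedness and finite-dimensionality, then for the selections via Nadler's theorem and the compactness of the limit sets; beyond that, the proof is routine bookkeeping essentially copied from the earlier theorems.
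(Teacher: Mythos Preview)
Your proposal is correct and follows essentially the same approach as the paper's own proof: both fix $p$, show boundedness of any generalized approximating sequence via the inclusion $S_p(\delta_n,\epsilon_n)\subset S_p(\epsilon,\epsilon)$, extract a convergent subsequence using finite-dimensionality, and then pass to the limit using Nadler's Theorem~\ref{Nadler Theorem}, $H$-continuity, and compactness of the values of $\mathscr{B}_1,\mathscr{B}_2$ to obtain limit selections $\tilde{u},\tilde{v}$. Your explicit mention of the nested subsequence extraction is, if anything, slightly more careful than the paper's ``without loss of generality'' phrasing, but the argument is the same.
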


\begin{proof}
The necessity part is obvious. To prove the sufficiency part, assume that for each $p\in X$, the \eqref{PSMVI} has nonempty solution set $S_p$. Fix $p\in X$ and let $\{p_n\}$ be any sequence in $X$ such that $p_n\rightarrow p$ as $n\rightarrow\infty$. Let $\{(z_n, w_n)\}$ be a generalized approximating sequence for the \eqref{PSMVI} corresponding to $\{p_n\}$. Then there exist a sequence $\{(u_n,v_n)\}\subset\mathscr{H}_1\times\mathscr{H}_2$ with $u_n\in\mathscr{B}_1(z_n,p_n)$, $v_n\in\mathscr{B}_2(w_n,p_n)$ and a positive sequence $\{\epsilon_n\}$ of real numbers with $\epsilon_n\rightarrow 0$ such that
\begin{equation*}
\begin{cases}
d(z_n,C)\leq\epsilon_n, d(w_n,Q)\leq\epsilon_n, \|w_n-Az_n\|\leq \epsilon_n;\\
\left\langle u_n, z_n-x\right\rangle+\varphi(z_n,p_n)-\varphi(x,p_n)\leq\epsilon_n \; \; \forall x\in C;\\
\left\langle v_n, w_n-y\right\rangle+\psi(w_n,p_n)-\psi(y,p_n)\leq\epsilon_n \; \; \forall y\in Q.
\end{cases}
\end{equation*}
Let $\delta_n=\|p_n-p\|$. Then $p_n\in\mathbb{B}(p,\delta_n)$ and $\delta_n\rightarrow 0$ as $n\rightarrow\infty$. Thus $(z_n,w_n)\in S_p(\delta_n, \epsilon_n)$.
Let $\epsilon>0$ be such that $S_p(\epsilon,\epsilon)$ is nonempty and bounded. Then there exists a natural number $n_0$ such that
\begin{align}
(z_n,w_n)\in S_p(\delta_n,\epsilon_n)\subset S_p(\epsilon,\epsilon) \; \; \forall n\geq n_0.\nonumber
\end{align}
This implies that $\{(z_n,w_n)\}$ is bounded. Let $\{(z_{n_k},w_{n_k})\}$ be any subsequence of $\{(z_n,w_n)\}$. Since $\{(z_n,w_n)\}$ is bounded, $\{(z_{n_k},w_{n_k})\}$ has a convergent subsequence. Without loss of generality, we may assume that $(z_{n_k},w_{n_k})\rightarrow (\tilde{z},\tilde{w})$ as $k\rightarrow\infty$. Also, it is clear that $(z_{n_k},w_{n_k})\in S_p(\delta_{n_k}, \epsilon_{n_k})$. Therefore, there exist a sequence $\{(u_{n_k},v_{n_k})\}\subset\mathscr{H}_1\times\mathscr{H}_2$ with $u_{n_k}\in\mathscr{B}_1(z_{n_k},p_{n_k})$, $v_{n_k}\in\mathscr{B}_2(w_{n_k},p_{n_k})$ and a positive sequence $\{\epsilon_{n_k}\}$ of real numbers with $\epsilon_{n_k}\rightarrow 0$ such that
\begin{equation}\label{AS8}
\begin{cases}
d(z_{n_k},C)\leq\epsilon_{n_k}, d(w_{n_k},Q)\leq\epsilon_{n_k}, \|w_{n_k}-Az_{n_k}\|\leq \epsilon_{n_k};\\
\left\langle u_{n_k}, z_{n_k}-x\right\rangle+\varphi(z_{n_k},p_{n_k})-\varphi(x,p_{n_k})\leq\epsilon_{n_k} \; \; \forall x\in C;\\
\left\langle v_{n_k}, w_{n_k}-y\right\rangle+\psi(w_{n_k},p_{n_k})-\psi(y,p_{n_k})\leq\epsilon_{n_k} \; \; \forall y\in Q.
\end{cases}
\end{equation}

Since $\mathscr{B}_1(z_{n_k},p_{n_k})$ and $\mathscr{B}_1(\tilde{z},p)$ are nonempty compact, $\mathscr{B}_1(z_{n_k},p_{n_k}), \mathscr{B}_1(\tilde{z},p)\in CB(\mathscr{H}_1)$ for all $k\in\mathbb{N}$. Therefore, by Theorem \ref{Nadler Theorem}, for each $u_{n_k}\in\mathscr{B}_1(z_{n_k},p_{n_k})$, there exists $\tilde{v}_{n_k}\in\mathscr{B}_1(\tilde{z},p)$
such that
\begin{align*}
\|u_{n_k}-\tilde{v}_{n_k}\|\leq (1+1/k) H(\mathscr{B}_1(z_{n_k},p_{n_k}),\mathscr{B}_1(\tilde{z},p))\rightarrow 0 \text{ as } k\rightarrow\infty.
\end{align*}

This implies that $u_{n_k}-\tilde{v}_{n_k}\rightarrow 0$ as $k\rightarrow\infty$. Since  $\mathscr{B}_1(\tilde{z},p)$ is compact, without loss of generality, we may assume that $\tilde{v}_{n_k}\rightarrow\tilde{u}\in\mathscr{B}_1(\tilde{z},p)$. This implies that $u_{n_k}\rightarrow\tilde{u}\in\mathscr{B}_1(\tilde{z},p)$ as $ k\rightarrow\infty$.
Similarly, using the assumptions on $\mathscr{B}_2$ we can find $\tilde{v}$ such that $v_{n_k}\rightarrow\tilde{v}\in\mathscr{B}_2(\tilde{w},p)$.

Taking the limit as $k\rightarrow\infty$ in \eqref{AS8} and using our assumptions, we obtain
\begin{equation}\label{GPSMVI9}
\begin{cases}
\tilde{z}\in C, \tilde{w}\in Q, \tilde{w}=A\tilde{z};\\
\left\langle\tilde{u}, \tilde{z}-x\right\rangle+\varphi(\tilde{z},p)-\varphi(x,p)\leq 0 \; \; \forall x\in C;\\
\left\langle\tilde{v}, \tilde{w}-y\right\rangle+\psi(\tilde{w},p)-\psi(y,p)\leq 0 \; \; \forall y\in Q.
\end{cases}
\end{equation}

This implies that $(\tilde{z},\tilde{w})\in S_p$. Therefore, each generalized approximate sequence for the \eqref{PSMVI} corresponding to $\{p_n\}$ converges to $(\tilde{z},\tilde{w})$.
Consequently, the $\mathcal{PSMVIP}$ is  LP well-posed in the generalized sense and the proof is complete.
\end{proof}

\begin{example}\label{Example:4}
Let $\mathscr{H}_1=\mathbb{R}=\mathscr{H}_2$, and let $C=[-1,1]=Q$ and $X=\mathbb{R}$. Let $A:\mathbb{R}\rightarrow\mathbb{R}$ be the bounded linear operator defined by $A(x)=x$. We define two functions, $\varphi:\mathbb{R}^2\rightarrow\mathbb{R}$ and $\psi:\mathbb{R}^2\rightarrow\mathbb{R}$, by $\varphi(x,p)=(x^2-1)^2-p^2$ and $\psi(y,p)=(y^4-1)^2-p^4$. It is clear that $\varphi$ and $\psi$ are both continuous functions on $\mathbb{R}^2$. Define two strict multivalued mappings, $\mathscr{B}_1:\mathbb{R}^2\rightarrow 2^\mathbb{R}$ and $\mathscr{B}_2:\mathbb{R}^2\rightarrow 2^\mathbb{R}$, by $\mathscr{B}_1(x,p)=\{x-p,0\}$ and $\mathscr{B}_2(y,p)=\{y-p,0\}$. It is clear that $\mathscr{B}_1$ and $\mathscr{B}_2$ are compact-valued.

Now, we have
\begin{align*}
D(\mathscr{B}_1(x,p),\mathscr{B}_1(y,q))&=\max\{d(x-p,\mathscr{B}_1(y,q)),d(0,\mathscr{B}_1(y,q))\}=d(x-p,\mathscr{B}_1(y,q))\\&
=\min\{\vert x-p\vert,\vert x-y-p+q\vert\}\leq \vert x-y\vert+\vert p-q\vert.
\end{align*}

Since a similar computation holds for $D(\mathscr{B}_1(y,q),\mathscr{B}_1(x,p))$, it follows that
\begin{align*}
H(\mathscr{B}_1(x,p),\mathscr{B}_1(y,q))=\max\{D(\mathscr{B}_1(x,p),\mathscr{B}_1(y,q)),D(\mathscr{B}_1(y,q),\mathscr{B}_1(x,p))\}\leq \vert x-y\vert+\vert p-q\vert.
\end{align*}

This shows that $\mathscr{B}_1$ is $H$-continuous on $\mathbb{R}^2$.

Also, it is not difficult to check that $(-1,-1), (1,1)\in S_p\subset\mathbb{R}^2$ for each $p\in\mathbb{R}$.

Now, for each $p\in\mathbb{R}$ and $\delta, \epsilon> 0$,  we have
\begin{multline}
S_p(\delta,\epsilon) = \bigcup_{q\in[p-\delta,p+\delta]}\Big\{(z, w)\in\mathbb{R}\times\mathbb{R}: d(z,[-1,1])\leq\epsilon, d(w,[-1,1])\leq\epsilon, \vert w-z\vert\leq\epsilon; \nonumber\\
\text{ there exist } u\in\mathscr{B}_1(z,q), v\in\mathscr{B}_2(w,q) \text{ such that }\\
\left\langle u, z-x\right\rangle+(z^2-1)^2-(x^2-1)^2\leq\epsilon\;\;\forall x\in [-1,1];\\
\left\langle v, w-y\right\rangle+(w^4-1)^2-(y^4-1)^2\leq\epsilon\;\;\forall y\in [-1,1]\Big\}.
\end{multline}

One can check that $S_p(\delta,\epsilon)\subset [-1-\epsilon,1+\epsilon]\times[-1-\epsilon,1+\epsilon]$ for each $p\in\mathbb{R}$ and each $\delta, \epsilon>0$. Therefore, $S_p(\epsilon,\epsilon)$ is a bounded subset of $\mathbb{R}^2$. Also, for each $p\in\mathbb{R}$, $S_p\subset S_p(\delta,\epsilon)$ for any $\delta,\epsilon>0$ and consequently, $S_p(\epsilon,\epsilon)\neq\emptyset$. Therefore, the corresponding $\mathcal{PSMVIP}$ associated to these data is LP well-posed in the generalized sense by Theorem \ref{Thm18}.
\end{example}

\section{Important special cases of the \eqref{SMVI}}
\label{Sec:5}

\begin{enumerate}
\item{\bf Split feasibility problem}: In particular, if we take $\mathscr{B}_1=\{0\}, \mathscr{B}_2=\{0\}, f=0=g$, then the \eqref{SMVI} reduces to the following split feasibility problem (SFP): find a point $x^*\in\mathscr{H}_1$ such that
\begin{align}\label{SFP}
x^*\in C \text{ and } Ax^*\in Q.\tag{SFP}
\end{align}

The \eqref{SFP} was first introduced and studied by Censor and Elfving \cite{YCEN1994} in 1994.

\item{\bf Split variational inequality problem}:  In particular, if we take $\mathscr{B}_1=F_1, \mathscr{B}_2=F_2$, where $F_1:\mathscr{H}_1\rightarrow\mathscr{H}_1$ and $F_2:\mathscr{H}_2\rightarrow\mathscr{H}_2$ are two single-valued mappings and $f=0=g$, then the \eqref{SMVI} reduces to the following split variational inequality problem (SVIP): find a point $(x^*,y^*)\in \mathscr{H}_1\times\mathscr{H}_2$ such that
\begin{equation}\label{SVIP}
\begin{cases}
x^*\in C, y^*\in Q, y^*=Ax^*;\\
\left\langle F_1(x^*), x-x^*\right\rangle\geq 0 \; \; \forall x\in C;\\
\left\langle F_2(y^*), y-y^*\right\rangle\geq 0 \; \; \forall y\in Q.\tag{SVIP}
\end{cases}
\end{equation}

The \eqref{SVIP} was first introduced and studied by Censor et al. \cite{YCEN22012} in 2012.

\item{\bf Split mixed variational inequality problem}:  In particular, if we take $\mathscr{B}_1=F_1, \mathscr{B}_2=F_2$, where $F_1:\mathscr{H}_1\rightarrow\mathscr{H}_1$ and $F_2:\mathscr{H}_2\rightarrow\mathscr{H}_2$ are two single-valued mappings, then the \eqref{SMVI} reduces to the following split mixed variational inequality problem (SMVIP): find a point $(x^*,y^*)\in \mathscr{H}_1\times\mathscr{H}_2$ such that
\begin{equation}\label{SMVIP}
\begin{cases}
x^*\in C, y^*\in Q, y^*=Ax^*;\\
\left\langle F_1(x^*), x-x^*\right\rangle+f(x)-f(x^*)\geq 0 \; \; \forall x\in C; \\
\left\langle F_2(y^*), y-y^*\right\rangle+g(y)-g(y^*)\geq 0 \; \; \forall y\in Q. \tag{SMVIP}
\end{cases}
\end{equation}

\item{\bf Split minimization problem}: In particular, if we take $\mathscr{B}_1=\{0\}, \mathscr{B}_2=\{0\}$, then the \eqref{SMVI} reduces to the following split minimization problem (SMP): find a point $(x^*,y^*)\in \mathscr{H}_1\times\mathscr{H}_2$ such that
\begin{equation}\label{SMP}
\begin{cases}
x^*\in C, y^*\in Q, y^*=Ax^*;\\
f(x^*)=\min_{x\in C} f(x);\\
g(y^*)=\min_{y\in Q} g(y).\tag{SMP}
\end{cases}
\end{equation}
\end{enumerate}

\text{}

\text{}

{\bf Acknowledments}. Soumitra Dey acknowledges the financial support of the Post-Doctoral Program at the Technion - Israel Institute of Technology. Simeon Reich was partially supported by the Israel Science Foundation (Grant 820/17), by the Fund for the Promotion of Research at the Technion (Grant 2001893) and by the Technion General Research Fund (Grant 2016723).\\

\text{}

\section*{Disclosure statement}
No potential conflict of interest was reported by the authors.

\section*{Data availability statement}
The authors acknowledge that the data presented in this study must be deposited and made
publicly available in an acceptable repository, prior to publication.



\end{document}